\newcolumntype{C}[1]{>{\centering\arraybackslash}p{#1}}
\definecolor{navy}{HTML}{2F729C} 
\newcommand{\fp}{\mathfrak{p}}
\newcommand{\fq}{\mathfrak{q}}
\newcommand{\OK}{\mathcal{O}_K}
\newcommand{\nup}{\nu_{\mathfrak{p}}}
\newcommand{\Q}{{\mathbb Q}}
\newcommand{\fN}{\mathfrak{N}}
\newcommand{\minD}{\mathfrak{D}^{\text{min}}}
\newcommand{\Co}{\mathcal{C}_{p, 1}(t, d)}
\newcommand{\Ct}{\mathcal{C}_{p, 2}(t, d)}
\newcommand{\Ci}{\mathcal{C}_{p, i}(t, d)}
\newcommand{\Coo}{\mathcal{C}_{p, 1}(t_0, d_0)}
\newcommand{\Cto}{\mathcal{C}_{p, 2}(t_0, d_0)}
\newtheorem{theorem}{Theorem}[section]
\newtheorem{lemma}[theorem]{Lemma}
\newtheorem{proposition}[theorem]{Proposition}
\newtheorem{corollary}[theorem]{Corollary}
\theoremstyle{definition}
\newtheorem{definition}[theorem]{Definition}
\newtheorem{example}[theorem]{Example}
\theoremstyle{remark}
\newtheorem{remark}[theorem]{Remark}
\numberwithin{equation}{section}
\begin{document}
	
	\title[Prime isogenous discriminant ideal twins]{Prime isogenous discriminant ideal twins}

	\author[A. Barrios]{Alexander J. Barrios}
	\address{Department of Mathematics, University of St. Thomas, St. Paul, Minnesota, USA}
	\email{abarrios@stthomas.edu}
	
	\author[M. Brucal-Hallare]{Maila Brucal-Hallare}
	\address{Department of Mathematics, US Air Force Academy, Colorado Springs, Colorado, USA}
	%\address{Department of Mathematics, Norfolk State University, Norfolk, Virginia, USA}
	\email{mbh2739@gmail.com}
	
	\author[A. Deines]{Alyson Deines}
	\address{Center for Communications Research, San Diego, California, USA}
	\email{aly.deines@gmail.com}
	
	\author[P. Harris]{Piper Harris}
	%\address{Phillips Exeter Academy, New Hampshire, USA }
	%\address{Department of Mathematics, University of Toronto, Toronto, Ontario, Canada}
	\email{drpiperh@liberatemath.org}
	
	\author[M. Roy]{Manami Roy}
	\address{Department of Mathematics, Lafayette College, Easton, Pennsylvania, USA}
	\email{royma@lafayette.edu}

	%    General info
	\subjclass{Primary 11G05, 11G07, 14K02, 14H10, 14H52}
	\keywords{minimal discriminants, discriminant twins, prime isogenies, reduction types}

	% Notes/TODO: 
	% Intro: 1. Main Theorem, 2. Why D^2 isn't enough.
	% Should we ever use \mid or always \nup?
	% Define typ_p(E)

	\begin{abstract}
		Let $E_{1}$ and $E_{2}$ be elliptic curves defined over a number field $K$. We say that $E_{1}$ and $E_{2}$ are discriminant ideal twins if they are not $K$-isomorphic and have the same minimal discriminant ideal and conductor. Such curves are said to be discriminant twins if, for each prime~$\mathfrak{p}$ of $K$, there are $\mathfrak{p}$-minimal models for $E_{1}$ and $E_{2}$ whose discriminants are equal. This article explicitly classifies all prime-isogenous discriminant (ideal) twins over $\mathbb{Q}$. We obtain this classification as a consequence of our main results, which constructively gives all $p$-isogenous discriminant ideal twins over number fields where $p\in\left\{  2,3,5,7,13\right\}  $, i.e., where~$X_0(p)$ has genus~$0$. In particular, we find that up to twist, there are finitely many $p$-isogenous discriminant ideal twins if and only if $K$ is~$\mathbb{Q}$ or an imaginary quadratic field. In the latter case, we provide instructions for finding the finitely many pairs of $j$-invariants that result in $p$-isogenous discriminant ideal twins. We prove our results by considering the local data of parameterized $p$-isogenous elliptic curves.
	\end{abstract}
	\maketitle
	\setcounter{tocdepth}{1}
	\tableofcontents
	
	\section{Introduction}
	%P1
	Let $E_1$ and $E_2$ be isogenous elliptic curves over a number field $K$. As $E_1$ and $E_2$ are isogenous, they have the same conductor. In this paper, we examine when they can also have the same minimal discriminant ideal. We call such elliptic curves \emph{discriminant ideal twins}. In this paper we classify $p$-isogenous discriminant ideal twins over number fields where $p \in \{2, 3, 5, 7, 13\}$, i.e., where $X_0(p)$ has genus 0 and thus the isogeny class has a parameterization given by \cite{Bariso}.
	The necessary SageMath code for the examples and results in the paper can be found on \cite{GitHubDIT}.
	This work is motivated by Deines \cite{Deines2018}, which studied the case of discriminant twins for isogenous semistable elliptic curves over $\mathbb{Q}$, that is, semistable  elliptic curves that have the same conductor and the same minimal discriminant.
	
	%P2
	Our original motivation comes from \cite{DeinesThesis}, where Deines records two algorithms for computing the degree of parameterization of a semistable elliptic curve by a Shimura curve. The first, more complete, algorithm uses an algorithm by Voight and Willis \cite{voight2014computing} that returns the $j$-invariant of the optimal quotient. The optimal quotient is the curve in the isogeny class such that the map from the Shimura curve to the elliptic curve has a connected kernel. With the optimal quotient in hand, the method of Ribet and Takahashi \cite{Ribet1997ParametrizationsOE} (generalized by Deines \cite{DeinesThesis} to the setting of totally real number fields) then computes the degree of this parameterization. Alternatively, if the map on component groups is surjective (which is conjectured by Takahashi in certain cases; see \cite{papikian2016optimal} for more exposition), then one does not need the extra machinery of first computing the optimal quotient to compute the Shimura degree and can compute the degree of the parameterization directly.  However, finding the optimal quotient now depends on the uniqueness of the minimal discriminant ideals.

	In the local setting of elliptic curves with a prime of multiplicative reduction, Deines \cite{Deines2018} used the Tate curve parameterization to show the following result:
	\begin{theorem}[Deines \cite{Deines2018}]
		Isogeny classes of size two with at least one prime of multiplicative reduction cannot have discriminant ideal twins.
	\end{theorem}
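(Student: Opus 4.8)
The plan is to argue $\fp$-adically at a prime of multiplicative reduction, using the Tate parametrization. First I would reduce to the case that $E_1$ and $E_2$ are linked by an isogeny $\phi\colon E_1\to E_2$ of prime degree $p$: any isogeny factors through prime-degree isogenies whose intermediate quotients again lie in the isogeny class, so the graph whose vertices are the curves of the class and whose edges are the prime-degree isogenies is connected, and a size-two class is therefore a single such edge. Since multiplicative reduction is an isogeny invariant, both $E_1$ and $E_2$ have multiplicative reduction at the given prime $\fp$; I would set $n_i:=-\nup(j(E_i))>0$, so that $E_i$ has Kodaira type $I_{n_i}$ at $\fp$ and $\fp$ divides the minimal discriminant ideal of $E_i$ to exact order $n_i$. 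It then suffices to prove $n_1\neq n_2$, for then $E_1$ and $E_2$ already have distinct minimal discriminant ideals and so cannot be discriminant ideal twins.

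For that, I would pass to the completion $K_{\fp}$ and write $E_i\cong E_{q_i}=\overline{K_{\fp}}^{\times}/q_i^{\Z}$, the Tate curve with parameter $q_i\in K_{\fp}^{\times}$, where $\nup(q_i)=n_i$; this Tate model is $\fp$-minimal precisely because $\nup(q_i)>0$. Under the uniformization $E_{q_1}[p]$ is generated by a primitive $p$-th root of unity $\zeta_p$ and a chosen $p$-th root $q_1^{1/p}$, and the kernel $C$ of $\phi$ is a cyclic order-$p$ subgroup. If $C=\langle\zeta_p\rangle=\mu_p$, then $E_2\cong E_{q_1}/\mu_p\cong E_{q_1^{\,p}}$ and $n_2=p\,n_1$; otherwise $C=\langle\zeta_p^{a}q_1^{1/p}\rangle$ for some $a$, and since $(\zeta_p^{a}q_1^{1/p})^{p}=q_1$ one gets $E_2\cong\overline{K_{\fp}}^{\times}/\langle\zeta_p^{a}q_1^{1/p}\rangle$, a Tate curve whose parameter has valuation $n_1/p$, forcing $p\mid n_1$ and $n_2=n_1/p$. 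Either way $n_2\in\{\,p\,n_1,\ n_1/p\,\}$, and since $p\ge 2$ and $n_1\ge 1$ this yields $n_2\neq n_1$. (The conceptual picture is that $X_0(p)$ has two cusps, and $\fp$-adically a $p$-isogenous pair has Tate parameters related by $q_2=q_1^{p}$ or $q_2=q_1^{1/p}$.)

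I expect the core of the argument---that a $p$-isogeny multiplies or divides the valuation of the Tate parameter by $p$---to be short once the setup is in place; the points that need a little care are: checking that the Tate model is the $\fp$-minimal Weierstrass model, so that $\nup(q_i)$ is genuinely the minimal-discriminant exponent (this is where $\nup(q_i)>0$ enters, via $\nup(c_4)=0$); verifying that the quotient of a Tate curve by a non-$\mu_p$ order-$p$ subgroup is again a Tate curve with the asserted parameter, which amounts to relabeling the multiplicative uniformizer and checking the new parameter lies in $K_{\fp}^{\times}$; and, at the outset, the reduction to an isogeny of prime degree for a size-two isogeny class. None of these should be a genuine obstacle.
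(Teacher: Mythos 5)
Your proof is correct and uses the same Tate-curve argument that the paper attributes to Deines~\cite{Deines2018} for this theorem; the paper itself does not reprove it, and the dichotomy $n_2\in\{pn_1,\,n_1/p\}$ you derive is exactly what the multiplicative row of the Dokchitser--Dokchitser table (Theorem~\ref{DokchitserSquared}) records, so the conclusion $n_1\neq n_2$ could also be read off directly from there. One small omission among the loose ends you flag: for nonsplit multiplicative reduction the Tate uniformization $E_i\cong\overline{K_{\fp}}^{\times}/q_i^{\Z}$ is only available over the unramified quadratic extension of $K_{\fp}$, but since that extension is unramified it preserves $\nup(q_i)=\nup(\Delta_{\fp})$, so the comparison of discriminant exponents goes through unchanged.
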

	
	%P4
	Moreover, Deines classified all semistable isogenous discriminant twins defined over $\mathbb{Q}$:
	
	\begin{theorem}[Deines \cite{Deines2018}]\label{Deines2018}
		Over $\mathbb{Q}$, there are only finitely many semistable isogenous discriminant twins. Given by LMFDB labels \cite{lmfdb}, they are $11a.1, 11a.3$ (25-isogenous), $17a.1, 17a.4$ (4-isogenous), $19a.1, 19a.3$, and $37b.1, 37b.3$ (both 9-isogenous).
	\end{theorem}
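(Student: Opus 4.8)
The plan is to prove this by a local analysis at the primes of bad reduction, using the Tate curve, in the spirit of the first quoted theorem of Deines. Since every elliptic curve over $\mathbb{Q}$ has a prime of bad reduction (there is no curve of conductor $1$), a semistable curve has a prime of multiplicative reduction, so by the first quoted theorem an isogeny class of size two has no discriminant twins; hence I would restrict to isogeny classes of size $\geq 3$.

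The key local input is the following. Let $\phi\colon E\to E'$ be an isogeny of prime degree $p$ and let $\ell$ be a prime of multiplicative reduction; then $E_{\mathbb{Q}_\ell}$ is a (possibly quadratically twisted) Tate curve with parameter $q_E\in\mathbb{Q}_\ell^\times$ and $v_\ell(\Delta_E^{\min})=v_\ell(q_E)$, and $\ker\phi$ is a Galois-stable order-$p$ subgroup of $E[p]=\langle\zeta_p,q_E^{1/p}\rangle$. If it localizes to $\mu_p$ then $q_{E'}=q_E^{p}$ and $v_\ell(\Delta_{E'}^{\min})=p\,v_\ell(\Delta_E^{\min})$; otherwise its local generator is $\zeta_p^{a}q_E^{1/p}$, which forces $q_E^{1/p}\in\mathbb{Q}_\ell^\times$ (in particular $p\mid v_\ell(\Delta_E^{\min})$) and gives $v_\ell(\Delta_{E'}^{\min})=v_\ell(\Delta_E^{\min})/p$. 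Either way the two valuations differ by $p^{\pm 1}$, so two curves of exact $\mathbb{Q}$-isogeny degree $p$ are never discriminant twins. Running this along a monochromatic path shows more generally that discriminant twins occur only at even distance, and inspecting the isogeny graphs of size $\geq 3$ for semistable curves over $\mathbb{Q}$ (via Mazur's and Kenku's classification of rational isogenies: cyclic $p^{2}$-isogenies with $p\in\{2,3,5\}$, the $\{1,2,4,8\}$-chain, the graph of three curves around a full $2$-torsion curve, and the $\{1,2,3,6\}$-square) one finds that a pair of discriminant twins must be a pair $E_{1}$, $E_{2}$ each $p$-isogenous to one common central curve $E_{0}$, for a single prime $p$.

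Next I would analyze the configuration $E_{1}\xleftarrow{\,p\,}E_{0}\xrightarrow{\,p\,}E_{2}$ at each bad prime $\ell$. Requiring $v_\ell(\Delta_{E_1}^{\min})=v_\ell(\Delta_{E_2}^{\min})$ forces the two isogenies out of $E_{0}$ to behave the same way at $\ell$: either both multiply $v_\ell$ by $p$, or both divide by $p$. Since there is a unique $\mu_p$, the second case is the one that produces genuinely distinct twins, and it requires $p\mid v_\ell(\Delta_{E_0}^{\min})$ together with $q_{E_0}^{1/p}\in\mathbb{Q}_\ell^\times$ (hence, as $\zeta_p\in\mathbb{Q}_\ell$, full $p$-torsion of $E_{0}$ over $\mathbb{Q}_\ell$); the twins are then the two $\div p$-quotients of $E_{0}$, with $v_\ell(\Delta_{E_i}^{\min})=v_\ell(\Delta_{E_0}^{\min})/p$. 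For $p=2$ this forces $E_{0}$ to have full rational $2$-torsion and $q_{E_0}$ to be a square in every $\mathbb{Q}_\ell$, which with semistability at $2$ is already extremely restrictive; the known examples indeed have $E_{0}$ equal to $X_{0}(11)$ with $v_{11}(\Delta)=5$, the conductor-$17$ full $2$-torsion curve, and the conductor-$19$, $37$ curves with $v_\ell(\Delta)$ divisible by $3$.

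The final step is to feed these conditions into the explicit parameterizations of $p$-isogeny classes from \cite{Bariso}: up to twist $E_{0}$ lies in a one-parameter family, and the constraints above — $p\mid v_\ell(\Delta_{E_0}^{\min})$ and the $p$-th power condition on the unit part of $q_{E_0}$ at every prime of the (squarefree) conductor — become divisibility and $p$-th power conditions on the parameter. I expect this is the main obstacle: proving that only finitely many parameter values survive. I would attack it by reading off from the parameterizing polynomials which primes can divide $\Delta^{\min}$ and with which valuations, turning "$p\mid v_\ell(\Delta_{E_0}^{\min})$ for all bad $\ell$ together with squarefree conductor" into a finite list of $S$-unit or Thue-type equations (for the $p=2$ case additionally exploiting that $q_{E_0}$ must be a square, not merely of even valuation, at every place); alternatively, one can observe that such an $E_{0}$ carries an unusually large rational isogeny/torsion structure and invoke Mazur-style bounds on its conductor. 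Once finiteness is established, a finite search over the bounded range yields exactly the isogeny classes $11a$, $17a$, $19a$, $37b$ and the pairs listed in the statement.
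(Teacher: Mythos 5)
This statement is quoted in the paper as background from Deines' 2018 work and is not proved there, so there is no in-paper proof to compare your attempt against; I will instead evaluate your proposal on its own merits.

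Your local analysis via the Tate parametrization is sound, and it is indeed the core tool in Deines' treatment of the semistable case: at each prime $\ell$ of multiplicative reduction a degree-$p$ isogeny sends $v_\ell(\Delta^{\min})$ to $p\,v_\ell(\Delta^{\min})$ (the $\mu_p$ quotient) or to $v_\ell(\Delta^{\min})/p$ (quotient by a $q^{1/p}$-type subgroup, which requires $q$ to be a $p$-th power in $\mathbb{Q}_\ell^\times$), so a monochromatic odd-length path cannot produce twins. Your reduction to the configuration $E_1 \leftarrow E_0 \rightarrow E_2$ is also consistent with the structure of $\mathbb{Q}$-isogeny graphs (Kenku's list), and the forced conclusion that \emph{both} isogenies out of $E_0$ must be of ``divide'' type at every bad $\ell$ is correct; this in particular restricts to $p\in\{2,3,5\}$ since the two outer curves are cyclically $p^2$-isogenous.

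The genuine gap is the finiteness step, which is where the theorem actually lives. You have correctly turned the problem into a Diophantine one — $q_{E_0}$ must be a $p$-th power in $\mathbb{Q}_\ell^\times$ for every prime of the (squarefree, nontrivial) conductor, on top of the existence of at least two independent rational $p$-isogenies out of $E_0$ — but you stop at ``I expect this is the main obstacle'' and sketch three possible routes (reading off valuations from the parametrizing polynomials to get $S$-unit/Thue equations, exploiting the global $p$-th power condition, or invoking Mazur-type bounds) without carrying any of them through. As it stands, nothing in your argument rules out an infinite family of parameter values $t_0$ satisfying all the local constraints, and the finite search producing exactly $11a$, $17a$, $19a$, $37b$ is never performed. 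There is also a smaller issue you gloss over: you pass from ``the kernel is $\langle \zeta_p^a q^{1/p}\rangle$'' to ``$q^{1/p}\in\mathbb{Q}_\ell^\times$ and the full $p$-torsion is $\mathbb{Q}_\ell$-rational,'' but the precise Galois condition for such a subgroup to be $\mathbb{Q}_\ell$-stable (taking into account the possible quadratic twist of the Tate curve and the action on $\zeta_p$) needs to be spelled out rather than asserted, since it is exactly this condition that becomes the arithmetic constraint you must later show has only finitely many solutions.
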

	
	%P5
	We note that quadratic twists of the isogenous discriminant twins in Theorem~\ref{Deines2018} result in discriminant twins that are not semistable. Consequently, we obtain infinite families of isogenous discriminant twins but only finitely many different $j$-invariants.
	
	%P6
	In this article, we generalize the results in \cite{Deines2018}. Namely, we remove the restriction that the elliptic curves have semistable reduction by considering all $p$-isogenous discriminant ideal twins over number fields, where $p \in \{2, 3, 5, 7, 13\}$.  We also extend the notion of discriminant twins to number fields via  local criteria (see Definition~\ref{def:disctwins}).
	
	%P7
	Recall that the classical modular curve $X_0(p)$ has genus $0$ at a prime $p$ if and only if $p \in \{2, 3, 5, 7, 13\}$ \cite{MazurRatIsogPrimeDeg}. 
	In particular, we have that for these primes, the Fricke parameterizations give the $j$-invariants of $p$-isogenous elliptic curves \cite[Chapter~2.8]{MR3838339}, \cite[Table~3]{MR3084348}. 
	Utilizing the Fricke parameterizations, Cremona, Watkins, and Tsukazaki \cite{CremWat,Tsukazaki} gave the kernel polynomial of an elliptic curve that admits a $p$-isogeny.
	In \cite{Bariso}, Barrios extended these works for those $n\ge2$ such that $X_0(n)$ has genus $0$ by introducing $56$ parameterized families of elliptic curves $\mathcal{C}_{n,i}(t,d)$ defined over $\mathbb{Q}(t,d)$ with the property that for a fixed $n$, the collection $\{ \mathcal{C}_{n,i}(t,d)\}_i$ consists of non-isomorphic isogenous elliptic curves over $\mathbb{Q}(t,d)$. We note that the notation is chosen so that $\mathcal{C}_{n,i}(t,d)$ is the quadratic twist of $\mathcal{C}_{n,i}(t,1)$ by $d$. In the case when $n=p$ is prime, we have that $i \in \{1,2\}$ and $\Ci:y^2=x^3+d^2 A_{p,i}(t)x+d^3 B_{p,i}(t)$ with $A_{p,i}, B_{p,i}$ are given in Table~\ref{ta:curves} (see Theorem~\ref{parafamilies}). The parameterized elliptic curves $\Ci$ have the property that if $E_1$ and $E_2$ are $p$-isogenous elliptic curves such that their $j$-invariants are not both identically $0$ or $1728$, then there are $t_0, d_0 \in K$ such that $E_i \cong \mathcal{C}_{p,i}(t_0,d_0)$ \cite[Theorem~1]{Bariso}. With this notation, we state our main result below, which is Theorem~\ref{ThmClassification}.
	
	\begin{theorem}
		\label{mainthm} %[Main Theorem]
		Let $K$ be a number field with ring of integers $\OK$ and let $p\in\left\{ 3,5,7,13\right\}$. 
		Let $\mathcal{C}_{p,1}(t_{0},d_{0})$ and $\mathcal{C}_{p,2}(t_{0},d_{0})$ be the parameterized elliptic curves over $K$ from Theorem~\ref{parafamilies}. Suppose further that their $j$-invariants are not equal.
		Then $\mathcal{C}_{p,1}(t_{0},d_{0})$ and $\mathcal{C}_{p,2}(t_{0},d_{0})$ are discriminant ideal twins if and only if for each prime $\fp$ of $K$,
		\begin{equation}\label{Thm1_3eqn}
			\nup(t_0) = \frac{12k}{p-1} \quad \text{ for }\quad 0 \leq k \leq \nup(p).
		\end{equation}
		Furthermore, $\mathcal{C}_{p,1}(t_{0},d_{0})$ and $\mathcal{C}_{p,2}(t_{0},d_{0})$ are $p$-isogenous discriminant twins if and only if
		$t_{0}\in\mathcal{O}_{K}^{\frac{12}{p-1}}$.
	\end{theorem}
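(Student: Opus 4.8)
The plan is to reduce the statement to a purely local computation on the discriminants of the parameterized families $\mathcal{C}_{p,1}(t_0,d_0)$ and $\mathcal{C}_{p,2}(t_0,d_0)$, prime by prime. First I would recall from Theorem~\ref{parafamilies} the explicit Weierstrass equations for the two families and extract their (non-minimal, naive) discriminants $\Delta_1$ and $\Delta_2$ as functions of $t$ and $d$; for a $p$-isogeny with $p\in\{3,5,7,13\}$ one expects $\Delta_1$ and $\Delta_2$ to differ by a factor that is a power of $t$ (up to units and a common twisting factor in $d$), reflecting the fact that the $p$-isogeny multiplies the discriminant valuation by a controlled amount. The key identity to isolate is $\nup(\Delta_i)$ in terms of $\nup(t_0)$, $\nup(d_0)$, and $p$; passing from the naive model to a $\fp$-minimal model subtracts $12\,\nup(u_i)$ for the appropriate scaling $u_i\in K_\fp^\times$, and the difference $\nup(\minDi)$ between the two minimal discriminant valuations is what must be shown to vanish for all $\fp$ precisely under condition~\eqref{Thm1_3eqn}.

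Next, for the discriminant \emph{ideal} twin statement, I would argue that $\mathcal{C}_{p,1}(t_0,d_0)$ and $\mathcal{C}_{p,2}(t_0,d_0)$ have equal minimal discriminant ideals if and only if $\nup(\minDo)=\nup(\minDt)$ for every $\fp$, and then show that this equality is governed entirely by $\nup(t_0)\bmod \tfrac{12}{p-1}$ together with the size of $\nup(t_0)$ relative to $\nup(p)$. Concretely, the minimal discriminants of the two curves agree at $\fp$ when either both curves already have good or mild reduction there contributing equally, or when the $t$-dependent factor is "absorbed" into the scaling on one side; a careful case analysis on $\nup(t_0)$ — zero, positive but not a multiple of $\tfrac{12}{p-1}$, a multiple $\tfrac{12k}{p-1}$ with $k\le\nup(p)$, or larger — shows that equality holds exactly in the stated range $0\le k\le\nup(p)$. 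The bound $k\le\nup(p)$ enters because beyond that the factor of $p$ appearing in the kernel polynomial / isogeny formulas can no longer be compensated, forcing the two minimal models to have different discriminant valuations. Since the conductor equality is automatic (isogenous curves have equal conductor), matching minimal discriminant ideals is exactly the definition of discriminant ideal twin, giving the first equivalence.

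For the discriminant \emph{twin} statement (the final claim), I would use that being discriminant twins is the stronger requirement that for each $\fp$ there exist $\fp$-minimal models with \emph{equal} discriminants as elements of $K_\fp^\times/(\mathcal{O}_{K_\fp}^\times)^{12}$ — i.e., not just equal valuations but equal up to a twelfth power of a unit — which, combined with the global normalization, forces $t_0$ itself to be a global $\tfrac{12}{p-1}$-th power in $\mathcal{O}_K$. One direction is immediate: if $t_0=s^{12/(p-1)}$ for some $s\in\mathcal{O}_K$, substituting and rescaling shows the two families have literally equal minimal discriminants at every prime, so they are discriminant twins. The converse uses the valuation condition~\eqref{Thm1_3eqn} at every $\fp$ to conclude $\nup(t_0)\equiv 0\pmod{\tfrac{12}{p-1}}$ for all $\fp$ (so $t_0\mathcal{O}_K$ is a $\tfrac{12}{p-1}$-th power of an ideal), and then a local-to-global argument together with the unit/twelfth-power matching at each place upgrades this to $t_0\in\mathcal{O}_K^{12/(p-1)}$; here one must be slightly careful that the twisting parameter $d_0$ does not obstruct the matching, which it does not since $d_0$ contributes the same factor to both discriminants.

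The main obstacle I anticipate is the case analysis bounding $k$ by $\nup(p)$: one has to track precisely how the prime $p$ appears in the minimal models of the two families near the primes $\fp\mid p$, show that the "extra" factor of $t$ distinguishing $\Delta_1$ from $\Delta_2$ can be cleared by a minimalizing substitution exactly when $\nup(t_0)\le \tfrac{12\,\nup(p)}{p-1}$, and verify there is no interference from Kodaira type changes (additive reduction) at those primes. The rest — the generic primes $\fp\nmid p$ and the $d_0$-twisting bookkeeping — should be routine given the explicit formulas in Theorem~\ref{parafamilies}.
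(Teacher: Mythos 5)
Your overall plan — compute the naive discriminants of the two parameterized families, note the ratio $\Delta_{p,1}/\Delta_{p,2}=t^{p-1}p^{-12}$, analyze valuations prime by prime, and use a local-to-global (Grunwald--Wang) argument for the discriminant-twin claim — is exactly the route the paper takes. But the step where you pass from "$\nu_{\mathfrak{p}}(\Delta_1)\equiv\nu_{\mathfrak{p}}(\Delta_2)\pmod{12}$" to "the minimal discriminant valuations agree" is not justified in your sketch, and this is where the real work lies. Absorbing a factor of $u^{12}$ into a scaling tells you the naive discriminants can be made to have the same valuation; it does \emph{not} tell you the minimal valuations coincide, because the valuation of a minimal discriminant is governed by the Kodaira--N\'eron type, which need not be forced by $\nu_{\mathfrak{p}}(\Delta)\bmod 12$ alone. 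Indeed, that is precisely why the theorem excludes $p=2$: as in Example~\ref{ex:p2}, one can have $\nu_{\mathfrak{p}}(\Delta_1)\equiv\nu_{\mathfrak{p}}(\Delta_2)\pmod{12}$ yet minimal discriminant valuations $30$ and $42$, because for $\mathfrak{p}\mid 2$ the types ${\rm I}_n^*$ with $n>0$ can occur for curves with potentially good reduction and make the minimal valuation exceed $12$. Your phrase "verify there is no interference from Kodaira type changes" names the obstacle but does not resolve it.

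What closes the gap in the paper is a chain of results you would need to reproduce. First, one shows that discriminant ideal twins have \emph{everywhere potentially good reduction}: this comes from the Dokchitser--Dokchitser relation between the $\delta_i$ in the multiplicative and potentially-multiplicative cases, which rules those out whenever $\delta_1=\delta_2$ (Lemma~\ref{lemmapotgoodred}). This is also what yields your upper bound $k\le\nu_{\mathfrak{p}}(p)$ cleanly — it is forced by the integrality of the $j$-invariant (Lemma~\ref{lem:potentiallygood}), not by anything about kernel polynomials. Second, with potential good reduction in hand, one shows that at $\mathfrak{p}\nmid 6$ the Kodaira type (hence the minimal discriminant valuation, which lies in $\{0,2,3,4,6,8,9,10\}$) is determined by $\nu_{\mathfrak{p}}(\Delta)\bmod 12$ (Lemma~\ref{KNforpneq3}); at $\mathfrak{p}\mid 3$ one further needs Ogg's formula together with the equality of conductors (which holds since the curves are isogenous) to pin down the type (Lemma~\ref{KNforpeq3}); and at $\mathfrak{p}\mid 2$ with $p$ odd one sidesteps the danger entirely by invoking the Dokchitser--Dokchitser theorem (Corollary~\ref{DokLemma}) to get $\delta_1=\delta_2$ directly, since $\nu_{\mathfrak{p}}(p)=0$ there. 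Without this package of facts your "careful case analysis" cannot be carried through, and as the $p=2$ example shows, the naive version of the sufficiency direction is genuinely false.
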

	
	%P8
	Note that $k$ depends on $\nup(p)$, so the principal ideal generated by $t_0$ can only be divisible by primes dividing $p$ in $K$. Thus, the number of units in $\OK$ determines the number of discriminant (ideal) twins in $K$.
	% TODO: (Aly) okay, so the number of units decides if we have infinitely many or not, but how $p$ factors in $K$ also determines "how many"
	When $p=2$, we note that the conditions on $t_0$ in Theorem~\ref{mainthm} are necessary, but not sufficient (see Theorem~\ref{PropMinDisc} and Example~\ref{ex:p2}). As a consequence of Theorem~\ref{mainthm}, we see that for $p$-isogenies with $p=3,5,7,13$, discriminant (ideal) twins are preserved under quadratic twist. This is not true for $2$-isogenies, as illustrated by the example below.
	
	\begin{example}
		Let $K=\mathbb{Q}(\sqrt[3]{2})$ and let $t_0=16$. Then $K$ has class number one, and we let $\mathfrak{p}$ denote the prime ideal generated by $\sqrt[3]{2}$. We note that $t_0$ satisfies the assumptions of Theorem~\ref{mainthm} for $p=2$ since $\nu_{\mathfrak{p}}(t_0)=12$. Furthermore, for each $d_0 \in K^\times$, $\mathcal{C}_{2,i}(16,d_0)$ is a quadratic twist of $\mathcal{C}_{2,i}(16,1)$ for $i=1,2$. Now consider the following elliptic curves over $K:$%
		\begin{align*}
			E_{1}  & =\mathcal{C}_{2,1}\!\left(  16,1\right)  :y^{2}=x^{3}%
			-587520x+171417600,\\
			E_{2}  & =\mathcal{C}_{2,2}\!\left(  16,1\right)  :y^{2}=x^{3}%
			-1105920x-176947200,\\
			F_{1}  & =\mathcal{C}_{2,1}\!\left(  16,-2\right)  :y^{2}=x^{3}%
			-2350080x-1371340800,\\
			F_{2}  & =\mathcal{C}_{2,2}\!\left(  16,-2\right)  :y^{2}=x^{3}%
			-4423680x+1415577600.
		\end{align*}

		In particular, $E_{1}$ (resp. $F_{1}$) is $2$-isogenous to $E_{2}$ (resp.
		$F_{2}$) and $F_{i}$ is the quadratic twist by $-2$ of $E_{i}$. Since $K$ is a
		class number one number field, we can consider the minimal discriminants of
		these elliptic curves, which are listed below:
		\begin{align*}
			\Delta_{E_{1}}^{\text{min}}  & =\Delta_{E_{2}}^{\text{min}}=4416972000\sqrt[3]{4}+5565036000\sqrt[3]%
			{2}+7011506000,\\
			\Delta_{F_{1}}^{\text{min}}  & =282686208000\sqrt[3]{4}+356162304000\sqrt[3]{2}%
			+448736384000,\\
			\Delta_{F_{2}}^{\text{min}}  & =17667888000\sqrt[3]{4}+22260144000\sqrt[3]{2}+28046024000.
		\end{align*}
		From this we see that $E_{1}$ and $E_{2}$ are discriminant twins, but $F_{1}$
		and $F_{2}$ are not discriminant twins. In fact, they are not discriminant
		ideal twins since $\nu_{\mathfrak{p}}(\Delta_{F_{1}}^{\text{min}})=30$ and $\nu
		_{\mathfrak{p}}(\Delta_{F_{2}}^{\text{min}})=18$. This shows that $2$-isogenous discriminant
		twins are not preserved under quadratic twist.
		
	\end{example}
	
	%P10
	Theorem \ref{mainthm} is proven in Section~\ref{sec:pfmainthm}. The techniques of our proof rely on a study of the local data of the parameterized isogenous families of elliptic curves $\mathcal{C}_{p,i}(t,d)$ and on the work of Dokchitser and Dokchitser \cite{DokchitserDokchitser2015}.
	Their work describes how the valuations of the minimal discriminants of two prime isogenous elliptic curves are related (see Theorem~\ref{DokchitserSquared}).
	As a consequence of loc. cit., we show that if $E_1$ and $E_2$ are discriminant ideal twins, then they have everywhere potentially good reduction (see Lemma~\ref{lemmapotgoodred}).

	By \cite{Bariso}, Theorem \ref{mainthm} informs us how to find all
	$p$-isogenous discriminant (ideal) twins $E_{1}$ and $E_{2}$ over a given
	number field where $p\in\left\{  2,3,5,7,13\right\}  $ and the $j$-invariants
	of $E_{1}$ and $E_{2}$ are not both identically $0$ or $1728$. In Section
	\ref{Sec:special-jinv}, we consider the case where $E_{1}$ and $E_{2}$ are
	$p$-isogenous elliptic curves over a number field $K$ with $j(E_{1}%
	)=j(E_{2})$.
	In particular, we prove the following
	result for singular $j$-invariants (see Theorem~\ref{thmforj-_1728}):

	\begin{theorem}\label{mainthm2}
		Let $K$ be a number field and let $E_1$ and $E_2$ be $p$-isogenous elliptic curves defined over $K$ for $p \in \{2,3,5,7,13 \}$. 
		Let $\zeta_3 = \frac{-1 + \sqrt{-3}}{2}$ be a third root of unity and $i = \sqrt{-1}$ a fourth root of unity.
		Suppose further that $j=j(E_{1} )=j(E_{2})\in\left\{  0,1728\right\}  $.
		If $E_{1}$ and~$E_{2}$ are discriminant ideal twins, then one of the following holds:
		
		\begin{enumerate}
			\item[($a$)] $(p,j)=(3,0)$, $\zeta_{3}\not \in K$, and the ideal $3\mathcal{O}_{K}$ is a square. 
			% \textcolor{red}{\sout{Conversely, if $\zeta_{3}\not \in K$ and the ideal $3\mathcal{O}_{K}$ is a square, then $E_1$ and $E_2$ are discriminant ideal twins;}}
			
			\item[($b$)] $(p,j)=(2,1728)$, $i\not \in K$, and the ideal $2\mathcal{O}_{K}$ is a square.
		\end{enumerate}
		Conversely,
		\begin{enumerate}
			\item[($c$)] if $(p,j)=(3,0)$, $\zeta_{3}\not \in K$, and the ideal $3\mathcal{O}_{K}$ is a square, then $E_1$ and $E_2$ are discriminant ideal twins. Further, if $3$ is a square in $K$, then $E_1$ and $E_2$ are discriminant twins. 
			\item[($d$)] if $(p,j)=(2,1728)$, then there are no discriminant twins. 
		\end{enumerate}
	\end{theorem}
	
	To prove Theorem~\ref{mainthm2}, we first show that $p$-isogenous discriminant ideal
	twins do not exist when $p\in\left\{  5,7,13\right\}  $ (see Proposition
	\ref{Proponjs}). We then finish proving Theorem~\ref{mainthm2} by studying $p$-isogenous
	parameterized families of elliptic curves corresponding to $p\in\left\{
	2,3\right\}  $ (see Lemma~\ref{j0or1728}).
	Continuing in Section \ref{Sec:special-jinv}, Theorem~\ref{Thm:equaljinv} then considers the $p$-isogenous discriminant twins defined over a number field, $K$, now with $j(E_{1})=j(E_{2}) \not \in \{0,1728\}$.
	Consequently, Theorems \ref{mainthm}, \ref{mainthm2}, and \ref{Thm:equaljinv} classify all $p$-isogenous discriminant (ideal) twins over number fields for $p\in\left\{  2,3,5,7,13\right\}  $. 
	We also show that the converse to Theorem~\ref{mainthm2} $(b)$ is not true in general (see Example~\ref{failureat2specialj}).
	
	%P11
	% TODO: do we want to rewrite the following in context of our WIN6 project?
	%Alex: I think that will be fine. 
	While the focus of this article is on prime-isogenous discriminant (ideal) twins, a similar analysis can be done in the case of $n$-isogenies for which $X_0(n)$ has genus $0$. Parameterizations for these isogenous elliptic curves are also found in \cite{Bariso}. The case of composite isogeny degrees are not as straightforward and will be considered in the sequel to this article.

	% \aly{TODO: state what we do in each section.}
	%P12
	%\manami{Need rewrite since section 2 things got moved and numbers are messed up. Also need to add Sec 3}
	\subsection{Organization of the paper}
	In Section \ref{sec:prelim}, we start by recalling some preliminary
	definitions and results about elliptic curves. More specifically, in
	Section~\ref{sec:disctwindef}, we define discriminant (ideal) twins, and in
	Section~\ref{sec:isofamily}, we introduce the isogeny parameterizations of
	Barrios \cite{Bariso}.
	
	The isogeny parameterizations in loc. cit. concern $p$-isogenous elliptic
	curves whose $j$-invariants are not both identically $0$ or $1728$.
	Consequently, in Section~\ref{Sec:special-jinv}, we first consider the case of
	$p$-isogenous discriminant (ideal) twins whose $j$-invariants are both
	identical.
	We then proceed to Section~\ref{sec:pfmainthm},
	which considers the case of $p$-isogenous discriminant (ideal) twins whose
	$j$-invariants are not both identically $0$ or $1728$. We conclude the section
	by showing that if a number field $K$ has infinite unit group, then there are,
	up to twist, infinitely many $p$-isogenous discriminant (ideal) twins for
	$p\in\left\{  3,5,7,13\right\}  $ (see Theorem~\ref{InflyDITs}). As a
	consequence, a number field $K$ admits, up to twist, finitely many discriminant
	ideal twins if $K$ is either $\mathbb{Q}$ or an imaginary quadratic field.
	
	We conclude our article with Section~\ref{sec:Examples}, where we use the
	results of our paper to explicitly classify, up to twist, all prime isogenous
	discriminant (ideal) twins over $\mathbb{Q}$ (see Proposition \ref{classinQ}). Next, we consider $p$-isogenous
	discriminant ideal twins over an imaginary quadratic field $K$ for
	$p\in\left\{  2,3,5,7,13\right\}  $. If the unit group of the ring of integers
	of $K$ is $\left\{  \pm1\right\}  $, Proposition \ref{CorQuaClass} provides
	instructions for finding, up to twist, all $p$-isogenous discriminant ideal
	twins. As a consequence, we classify the $j$-invariants corresponding to
	$p$-isogenous discriminant ideal twins over $\mathbb{Q}(\sqrt{-33})$ (see Corollary \ref{corQ33}). If instead, the unit group of the
	ring of integers of $K$ is not $\left\{  \pm1\right\}  $, i.e., $K$ is either
	$\mathbb{Q}(i)$ or $\mathbb{Q}(\zeta_{3})$, then we explicitly classify, up to twist, all $p$-isogenous
	discriminant (ideal) twins (see Proposition \ref{PropImKu}).

	\subsection{Acknowledgments.} 
	%P13
	This paper is an outgrowth of the workshop Rethinking Number Theory: 2020, which was organized by Heidi Goodson, Christelle Vincent, and Mckenzie West. The authors extend their sincere thanks to the workshop organizers, without which this paper would not have been written. We also thank the anonymous referees for reading our manuscript very carefully and providing many valuable comments and suggestions. The AMS Simons Travel Grant program partially supported MR during this work.

	%P14
	\section{Preliminaries}\label{sec:prelim}
	In what follows, we recall some facts about elliptic curves. See Silverman~\cite{Silverman2009} for details. An elliptic curve, $E$, is defined over a field, $K$, if $E$ is given by a Weierstrass model
	\begin{equation}\label{eq:1}
		E:y^2 + a_1 xy + a_3 y = x^3 + a_2 x^2 + a_4x + a_6     
	\end{equation}
	with $a_i \in K.$  From the Weierstrass coefficients, one defines the
	quantities%
	% The $b_i$ and $c_i$-invariants are further defined as
	$$
	\begin{array}
		[c]{l}%
		c_{4}=a_{1}^{4}+8a_{1}^{2}a_{2}-24a_{1}a_{3}+16a_{2}^{2}-48a_{4},\\
		c_{6}=-\left(  a_{1}^{2}+4a_{2}\right)  ^{3}+36\left(  a_{1}^{2}%
		+4a_{2}\right)  \left(  2a_{4}+a_{1}a_{3}\right)  -216\left(  a_{3}^{2}%
		+4a_{6}\right)  ,\\
		\Delta=\frac{c_{4}^{3}-c_{6}^{2}}{1728},\qquad\text{and}\qquad j=\frac
		{c_{4}^{3}}{\Delta}.
	\end{array}
	% \begin{array}{ll}
		%  b_2 = a_1^2 + 4a_2, & b_4 = 2a_4 + a_1 a_3, \qquad \qquad \qquad b_6 = a_3^2 + 4a_6,  \\
		% % b_6 = a_3^2 + 4a_4 &  \\
		%  c_4 = b_2^2 - 24b_4, & c_6 = -b_2^3 + 36b_2 b_4 - 216b_6,\\
		%  \Delta = (c_4^3 - c_6^2)/1728, & j = c_4^3 /\Delta. \\
		% \end{array}
	$$ 
	Now, suppose that $E$ is as given above. The elliptic curve 
	$$E^\prime : y^2 + a_1' xy + a_3' y = x^3 + a_2' x^2 + a_4'x + a_6' $$
	is $K$-isomorphic to $E$ if there is an admissible change of variables $\tau: E \rightarrow E'$ defined by $(x,y) \mapsto (u^2 x + r,u^3 y + su^2x + w)$, where $u, r, s, w \in K$, with $u \neq 0$. We write $\tau = [u, r, s, w].$ Consequently,
	\[
	j^{\prime} = j, \qquad \Delta^{\prime}=u^{-12}\Delta,\qquad c_{4}^{\prime}%
	=u^{-4}c_{4},\qquad c_{6}^{\prime}=u^{-6}c_{6}.
	\]
	Note that the $j$-invariant does not depend on the choice of model, but the discriminant $\Delta$ does. We will frequently use that an admissible change of variables only changes the discriminant by twelfth powers of $u$. If any of $u, r, s, w \not \in K$, then $E$ and $E^\prime$ are isomorphic over an algebraic closure of $K$. When this occurs, i.e., when $\tau$ is defined over an extension of $K$, $E^\prime$ is said to be a \textit{twist} of $E$ if $E'$ is also defined over $K$. In this case, we say that $\tau$ is a twist.
	
	\subsection{Minimal discriminants}
	\label{sec:mindiscs}
	
	%P15
	To start, let $K$ be a number field or local field. 
	An elliptic curve $E:y^2 + a_1 xy + a_3 y = x^3 + a_2 x^2 + a_4x + a_6$ defined over $K$ is given by an 
	\textit{integral Weierstrass model} if each $a_i$ is in the ring of integers of $K$ 
	for $i \in \{1, 2, 3, 4, 6\}$.
	For the rest of this section, let $K$ be a number field and denote its ring of integers by $\OK$. 
	Let $\fp$ be a prime ideal of $K$ and $\nup$ be the normalized valuation of the completion $K_{\fp}$ of $K$ at~$\fp$. 
	Further, let $R_{\fp}$ be the ring of integers of $K_{\fp}$ and let $\pi$ be a uniformizer for $R_{\fp}$. Via the inclusion $\iota:K \hookrightarrow K_{\fp}$ we will view curves defined over $K$ as defined over $K_{\fp}$. 
	As our goals are ultimately to say something global, we will choose $\pi \in R_{\fp}$ so that pulling back via the inclusion $\pi \in \mathcal{O}_K$. 
	In particular, we will choose $\pi$ so that $\nup(\pi) = 1$ and $\nu_{\mathfrak{q}}(\pi) \geq 0$ for all other primes $\mathfrak{q}$ of $\mathcal{O}_K.$ 
	In fact, if $\fp$ is not principal and $\pi \in \fp$ satisfies $\nup(\pi)=1$, 
	then there is at least one distinct prime $\mathfrak{q}$ with $\nu_\mathfrak{q}(\pi)\geq1$. 
	%\alex{should we add that $\fp \cap \fq = \{0\}$? I think this is a cool feature of, and it shows that an element will have a norm divisible by rational primes different than $p$.}
	%\aly{I don't think we need this as if $\fp$ and $\fq$ are disctinct primes then this already happens.}

	%P17
	With the notation set, we discuss some local properties of elliptic curves. Let $E$ be an elliptic curve defined over $K_{\fp}$ with Weierstrass equation as in \eqref{eq:1}. 
	The transformation $\tau = [u, 0, 0, 0]$ on $E$ gives a $K_{\fp}$-isomorphic elliptic curve whose Weierstrass coefficients are $u^{-i}a_i$. 
	Thus, choosing $u$ to be a sufficiently large power of $\pi$ we can find a model for $E$ such that all coefficients are in $R_{\fp}$, i.e., we can always find a $\fp$-integral Weierstrass model.
	Note that given our choice for~$\pi$, if $E$ is defined over $K$, this $\fp$-integral model is globally integral.
	The integral Weierstrass model, by construction, has an integral discriminant, $\nup(\Delta) \geq 0.$ As $\nup$ is discrete, there will be an integral model such that $\nup(\Delta) \geq 0$ is minimal. This model will not be unique, but this minimal valuation of the discriminant will be.
	
	\begin{definition}
		A Weierstrass model for $E$ over $K_{\fp}$ is called a \textit{$\fp$-minimal (Weierstrass) model} if $\nup(\Delta)$ is minimized subject to the constraint that the model is an integral Weierstrass model. Any $\Delta$ such that $\nup(\Delta)$ is minimal is called a \textit{$\fp$-minimal discriminant} of $E$, and we call $\nup(\Delta)$ the \textit{valuation of the minimal discriminant} of $E$ at $\fp$.
	\end{definition}
	
	%P18
	%Silverman \cite[Remark~1.1, pg.\ 186]{Silverman2009} notes that if an integral Weierstrass model is minimal then 
	%$\nup(\Delta) < 12$ or $\nup(c_4) < 4$ and if $\nup(6) = 0$, the converse holds.
	% TODO: if we decide we need the paragraph above, then we need to note this only holds for char \neq 2, 3
	Further, it follows from \cite[Proposition~1.3]{Silverman2009} that every elliptic curve $E$ defined over $K_{\fp}$ has a $\fp$-minimal model and the valuation of the discriminant of any $\fp$-minimal model is unique up to a change of coordinates $\tau = [u, r, s, w]$ with $u \in R_{\fp}^*$ and $r, s, w \in R_{\fp}$.
	% and in fact $u, r, s, t \in \mathcal{O}_K$ via the inclusion $\iota$ and appropriate choice of uniformizer.

	%P19
	Next, we examine the global properties of elliptic curves in relation to the valuation of the minimal discriminant of $E$ at $\fp$. Let $E$ now be defined over a number field $K$. 
	As above, for each prime $\fp$ of $K$, we can embed $\iota: K \hookrightarrow K_{\fp}$.
	Using this embedding, we can consider the base change of $E$ to $K_{\fp}$. 
	Thus for each prime $\fp$ we can choose a $\fp$-minimal model, $E_{\fp}$ defined over $K$, given by an integral Weierstrass model, that is $K$-isomorphic to $E$ with $\fp$-minimal discriminant $\Delta_{\fp}$. This, together with \cite[Proposition
	VII.1.3]{Silverman2009} leads us to the following lemma, which we will use in proving our main results:
	
	\begin{lemma}\label{newSilverman}
		Let $K$ be a number field and let $E$ be an elliptic curve defined over $K$ with an
		integral Weierstrass model. For a prime $\mathfrak{p}$ of $K$, there is an
		elliptic curve $E^{\prime}$ that is $K$-isomorphic to $E$ such that
		$E^{\prime}$ is a $\mathfrak{p}$-minimal model. In addition, the isomorphism
		$\tau:E\rightarrow E^{\prime}$ is of the form $\tau=\left[  u,r,s,w\right]  $,
		where $u,r,s,w\in \mathcal{O}_K$ with $u\neq0$.
	\end{lemma}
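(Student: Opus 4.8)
The strategy is to solve the problem over the completion $K_\fp$ using \cite{Silverman2009}, and then descend the solution to $\mathcal{O}_K$ by $\fp$-adic approximation, using that $\mathcal{O}_K$ is dense in $R_\fp$.

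First I would pass to $K_\fp$ via $\iota$. Since $E/K$ is integral it is $\fp$-integral, so by \cite[\S VII.1]{Silverman2009} (or Tate's algorithm) there is a $\fp$-minimal model $\widetilde E/K_\fp$ together with an admissible change of variables $\tau_0=[u_0,r_0,s_0,w_0]\colon E\to\widetilde E$ with $u_0,r_0,s_0,w_0\in R_\fp$; here one uses that passing from an $R_\fp$-integral model to a $\fp$-minimal one can always be effected by an $R_\fp$-integral transformation. Composing $\tau_0$ with a suitable automorphism $[\varepsilon^{-1},0,0,0]$, $\varepsilon\in R_\fp^{\times}$, I may assume $u_0=\pi^{m}$, where $\pi\in\mathcal{O}_K$ is the fixed uniformizer at $\fp$ and $m:=\nup(u_0)\ge 0$. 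Writing $u^i a_i'=P_i(a_1,\dots,a_6,r,s,w)$ for the universal integer polynomials relating the Weierstrass coefficients of $E$ and $E'=[u,r,s,w](E)$ (see \cite[\S III.1]{Silverman2009}), the fact that $\widetilde E$ is $R_\fp$-integral gives $\nup\!\big(P_i(a_1,\dots,a_6,r_0,s_0,w_0)\big)\ge im$ for each $i$, and $\nup(\Delta)-\nup(\Delta_{\widetilde E})=12m$ is the drop to the minimal discriminant valuation.

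Next I would choose $r,s,w\in\mathcal{O}_K$ with $\nup(r-r_0),\ \nup(s-s_0),\ \nup(w-w_0)\ge 6m$; this is possible because the reduction map $\mathcal{O}_K\to R_\fp/\fp^{6m}R_\fp$ is surjective. Put $u=\pi^{m}\in\mathcal{O}_K$ (so $u\neq 0$) and $E'=[u,r,s,w](E)$. Since evaluating an integer-coefficient polynomial at $\fp$-integral arguments cannot decrease the $\fp$-adic valuation of a difference, $\nup\!\big(P_i(a_1,\dots,a_6,r,s,w)-P_i(a_1,\dots,a_6,r_0,s_0,w_0)\big)\ge 6m\ge im$, hence $\nup\!\big(P_i(a_1,\dots,a_6,r,s,w)\big)\ge im$ and therefore $\nup(a_i')=\nup(P_i)-im\ge 0$: the model $E'$ is $\fp$-integral. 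Moreover $\nup(\Delta_{E'})=\nup(\Delta)-12m=\nup(\Delta_{\widetilde E})$ is the minimal value, so $E'$ is a $\fp$-minimal model, and $\tau=[u,r,s,w]$ has all of $u,r,s,w$ in $\mathcal{O}_K$ with $u\neq 0$, which is the lemma.

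The one genuine point to check is the approximation step — that $\fp$-integrality and $\fp$-minimality survive replacing $(r_0,s_0,w_0)\in R_\fp^3$ by $\mathcal{O}_K$-approximants — and the valuation estimate together with the margin $6m$ is exactly what secures it; everything else is a bookkeeping use of the transformation formulae and \cite[\S VII.1]{Silverman2009}. I would also note that, since $\pi$ need not be a unit at the other primes of $\mathcal{O}_K$ when $\fp$ is non-principal, $E'$ is in general only guaranteed to be $\fp$-integral; if $\fp$ is principal (in particular whenever $K$ has class number one, so for $K=\Q$) one may instead take $\pi$ to be a unit at every other prime, and then $E'$ is an integral Weierstrass model over $\mathcal{O}_K$.
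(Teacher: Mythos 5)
Your proof is correct, and it usefully makes explicit a step the paper leaves implicit. The paper does not actually supply a proof of this lemma: it records Silverman's Proposition~VII.1.3 (existence of a $\fp$-minimal model over $K_\fp$ reached by a transformation with $u,r,s,w\in R_\fp$) and then, after remarking that the fixed uniformizer $\pi$ was chosen to lie in $\mathcal{O}_K$, simply asserts that one can take a $\fp$-minimal model over $K$ with transformation data in $\mathcal{O}_K$. The missing step is exactly the descent from $R_\fp$ to $\mathcal{O}_K$, and your $\fp$-adic approximation argument is the right way to supply it. The key computation is sound: after normalizing $u_0=\pi^m$ by multiplying by a unit of $R_\fp$, the universal integer polynomials satisfy $\nup\bigl(P_i(a_1,\dots,a_6,r_0,s_0,w_0)\bigr)\geq im$ because $\widetilde{E}$ is $\fp$-integral, and since each $P_i$ has integer coefficients and is evaluated at $\fp$-integral arguments, replacing $(r_0,s_0,w_0)$ by $\mathcal{O}_K$-approximants that agree modulo $\fp^{6m}$ changes each $P_i$ by something of valuation at least $6m\geq im$, preserving the $\fp$-integrality bound. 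Since the lemma only demands that $E'$ be a $\fp$-minimal model (i.e.\ $\fp$-integral with minimal $\nup(\Delta)$), not that it be globally integral, your construction yields exactly what is claimed; and your closing remark correctly identifies when one can additionally guarantee global integrality by choosing $\pi$ to be a unit away from $\fp$.
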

	
	Putting together the information from all the $\fp$-minimal models we can construct the \textit{minimal discriminant ideal.}
	
	\begin{definition}
		Let $E$ be an elliptic curve defined over a number field $K$. The \textit{minimal discriminant ideal} of $E$, denoted by $\minD$, is the (integral) ideal of $K$ given by 
		$$\minD = \prod_{\fp} \fp^{\nup(\Delta_{\fp})}.$$
	\end{definition}
	
	We note that over a number field, it is not always possible to find a single Weierstrass equation that is simultaneously minimal for every prime $\fp$ of $K$. Following Silverman's exposition in \cite[Section VIII.8]{Silverman2009}, we examine the obstruction to finding a single Weierstrass equation that is simultaneously minimal for every prime $\fp.$ Start with a Weierstrass equation for $E$ as in \eqref{eq:1} with discriminant $\Delta.$ For each prime $\fp$ we can find a change of variables $\tau_{\fp} = [u_{\fp}, r_{\fp}, s_{\fp}, w_{\fp}]$ such that $\tau_{\fp}(E)$ is minimal at $\fp$ with discriminant $\Delta_{\fp}.$ Then the two discriminants are related by $\Delta = u_{\fp}^{12} \Delta_{\fp}$. Let us define the ideal $\mathfrak{a}_{E} = \prod_{\fp} \fp^{-\nup(u_{\fp})}.$ Then we can write the relationship between the minimal discriminant ideal $\minD$ and $\Delta$:
	$$\minD = \left(\Delta\right) \mathfrak{a}_{E}^{12}.$$

	Silverman notes in  \cite[Lemma 8.1]{Silverman2009} that the ideal class of $\mathfrak{a}_{E}$ in the ideal class group of $K$ is independent of $\Delta$, leading to the following definition:  the \textit{Weierstrass class} of $E$, denoted by $[\mathfrak{a}_{E}]$, is the ideal class in $K$ corresponding to any ideal $\mathfrak{a}_{E}$ as above. Thus, the obstruction to finding a single Weierstrass equation that is simultaneously minimal for every prime $\fp$ is the Weierstrass class of $E$. 
	
	\begin{definition}
		A \textit{global minimal model} for $E$ over $K$ is an integral Weierstrass equation 
		$$y^2 + a_1 xy + a_3 y = x^3 + a_2 x^2 + a_4 x + a_6$$
		such that the discriminant $\Delta$ of the equation satisfies $\minD = \left( \Delta \right).$
	\end{definition}
	
	%P23
	Such a global Weierstrass model for $E$ exists if and only if the Weierstrass class of $E$ is trivial, i.e., $[\mathfrak{a}_{E}] = [\left( 1 \right)]$ \cite[Proposition 8.2, Section VIII]{Silverman2009}. Thus, if $K$ has class number one, then every elliptic curve over $K$ has a global minimal model.
	
	\subsection{Conductors and Local data of elliptic curves}
	\label{sec:localdata}
	%P24
	The discriminant and conductor of an elliptic curve are closely related and measure its arithmetic complexity. Before defining the conductor, we describe the possible reduction types of an elliptic curve.
	An elliptic curve $E$ defined over $K_\mathfrak{p}$ has the following possible \textit{reduction types}: \textit{good reduction} when $\nup\!\left(  \Delta_{\fp}\right)=0$, \textit{multiplicative reduction} when $\nup\!\left(  \Delta_{\fp}\right)>0$ and $\nup\!\left(  c_{4}\right)=0$, and \textit{additive reduction} if $\nup\!\left(  \Delta_{\fp}\right) >0$ and $\nup\!\left(c_{4}\right)>0$. We further say that $E$ has \textit{potentially good (resp.\ multiplicative) reduction} if it has good (resp.\ multiplicative) reduction over a finite extension of $K_{\fp}$.
	%P25
	Now suppose that $E$ is an elliptic curve $E$ defined over a number field $K$. Then, we say that $E$ has good, multiplicative, additive,   potentially good, or potentially multiplicative reduction at a prime $\fp$ if the base change of $E$ over $K_{\fp}$ has the same reduction, respectively. 
	
	\begin{definition}
		The \textit{conductor}, $\fN$, of an elliptic curve $E$ defined over a number field $K$ is an ideal of $\OK$ defined as
		$$\fN = \prod_{\fp} \fp^{f_{\fp}},$$
		where
		\begin{equation}
			f_{\fp}=\begin{cases}
				0 & \text{if $E$ has good reduction at $\fp$},\\
				1 & \text{if $E$ has multiplicative reduction at $\fp$},\\
				2 + \delta_{\fp} & \text{if $E$ has additive reduction at $\fp$.}\\ 
			\end{cases}
		\end{equation}
		%P26
		If $\fp \nmid 6$, then $\delta_{\fp} = 0$.  If $\fp \mid 6$, then $0 \leq \delta_{\fp} \leq 6\nup(2) + 3\nup(3).$  If $E$ has additive reduction at $\fp$ with $\delta_{\fp} = 0$, then $E$ is \textit{tamely ramified} at $\fp$.  If $E$ has additive reduction at $\fp$ with $\delta_{\fp} > 0$, then $E$ has \textit{wild ramification} at $\fp.$ 
	\end{definition}
	
	%P27
	% We have a few notes about the conductor.
	In particular, wild ramification can only occur at primes above 2 and 3. The conductor and minimal discriminant ideals are divisible by the same primes, but the exponents at said primes may differ.
	
	%P28
	% Given an elliptic curve, $E/K$, Tate's Algorithm \cite{Tate1975} (see also \cite[Chapter IV]{Silverman1994}) computes the conductor  of $E$. To do so, Tate's algorithm computes the regular proper minimal model, $\mathcal{E}_{\fp}$, of $E$ over $R_{\fp}$. In particular, we obtain the following data for $E$ at a given prime~$\fp$:
	% \begin{enumerate}
		% \item[(i)] The special fiber of $\mathcal{E}_{\fp}$ is given by the Kodaira-N\'{e}ron type at $\fp$ of $E$, denoted $\operatorname{typ}_{\fp}(E)$. 
		% We use Kodaira symbols to describe the Kodaira-N\'{e}ron type at $\mathfrak{p}$.
		% TODO: define typ here?
		% TODO: (Aly) I'm not sure I like how wordy (or the look of) the bullets.
		
		% \item[(ii)]  The number, $m_{\fp}$, of irreducible components (without counting multiplicities), 
		% defined over $\overline{\mathbb{F}}_{\fp}$, of the special fiber of $\mathcal{E}_{\fp}$.
		
		% \item[(iii)] The conductor exponent $f_{\fp}$. 
		% \end{enumerate}
	%\textcolor{red}\sout{In fact, the conductor exponent $f_{\fp}$ is computed using the following formula due to Ogg \cite{Ogg}  in terms of the valuation of the minimal discriminant ideal $\minD$ and $m_{\fp}$:
		%\begin{equation}
		%\label{Ogg}
		%    f_{\fp}=\nup(\mathfrak{N})=\nup(\minD)-m_{\fp}+1.
		%\end{equation}}}

		% TODO: add words to connect

		\subsection{Discriminant Twins}
		\label{sec:disctwindef}
		%P29
		We can now define the object of study, discriminant (ideal) twins.
		
		\begin{definition}
			\label{def:disctwins}
			Let $K$ be a number field, and let $E$ and $E'$ be elliptic curves defined over $K$ that are not $K$-isomorphic. We say that $E$ and $E'$ are \textit{discriminant ideal twins} if they have the same discriminant ideal and the same conductor.    If, additionally, for each prime $\fp$ there exist $\fp$-minimal models for $E$ and $E'$ defined over $\OK$ such that $\Delta_{\fp} = \Delta'_{\fp}$,    then we say $E$ and $E'$ are \textit{discriminant twins.}
		\end{definition}
		
		%P30
		Note that when two elliptic curves $E$ and $E'$ over $K$ are discriminant ideal twins, their Weierstrass classes are the same, $[\mathfrak{a}_{E}] = [\mathfrak{a}_{E'}]$. Thus, $E$ and $E'$ will either both have or fail to have global minimal models. Note that in \cite{Deines2018}, discriminant twins are defined only over $\Q$, which has class number one. In this early definition, they are defined as two elliptic curves with the same conductor and minimal discriminant. Our definition is a direct generalization to non-class number one number fields.
		
		%P31
		Now that we have defined discriminant (ideal) twins, we can investigate the discriminant ideals of $p$-isogenous elliptic curves over number fields. 
		
		\subsection{Isogenies}
		\label{sec:isogenies}
		%P32
		In this paper, we are interested specifically in isogenous discriminant (ideal) twins.
		%P33
		Two elliptic curves $E_1$ and $E_2$ are \textit{$n$-isogenous} if there exists a surjective morphism $\varphi:E_1\rightarrow E_2$ such that $\ker\varphi\cong \mathbb{Z} /n \mathbb{Z}$. The isogeny $\varphi$ is defined over $K$ if $\ker\varphi$ is $\operatorname*{Gal}\!\left(\overline{K}/K\right)  $-invariant. If  $\varphi:E_1\rightarrow E_2$ is an $n$-isogeny defined over $K$, then the $K$-isomorphism class of the pair $(E,\ker \varphi)$ is a non-cuspidal $K$-rational point on the classical modular curve $X_0(n)$. Here, we recall that the non-cuspidal $K$-rational points of $X_0(n)$ parameterize isomorphism classes of pairs $(E,C)$ where $E$ is an elliptic curve defined over $K$ and $C$ is a cyclic subgroup of $E$ of order $n$ such that $C$ is $\operatorname*{Gal}\!\left(\overline{K}/K\right)  $-invariant.
		
		%TODO: Why we care about just certain primes.  Mazur's theorem.
		%TODO: Isogenous curves have the same conductor.
		%TODO: (Aly) mention that isogenies factor into compositions of prime isogenies?
		
		%P34
		\subsection{Reduction types of  $p$-isogenous elliptic curves}
		\label{sec:reductiontypes}
		In this section, we discuss some results about reduction types of $p$-isogenous elliptic curves. In this direction, we consider the following result that relates the discriminants of $p$-isogenous elliptic curves over local fields. 
		
		%TODO: Aly{Add Kodaria symbols in the table}
		%TODO: \manami{Can write full form of potentially over pot.}
		%TODO: (Aly) maybe note that $p$ and $\fp$ are unrelated?
		
		\begin{theorem}[Dokchitser-Dokchitser \cite{DokchitserDokchitser2015}]
			\label{DokchitserSquared}
			Let $K_{\fp}$ be a local field and let $\nu = \nup$ be the normalized valuation of the completion of $K_{\fp}$. For a rational prime $p$, suppose that $E_1$ and $E_2$ are $p$-isogenous elliptic curves defined over $K_{\fp}$ with $j$-invariants $j_1$ and $j_2$, respectively.    Take $\delta_i = \nu(\Delta_{\fp}(E_i))$ to be the valuation of the minimal discriminant of $E_i$ at $\mathfrak{p}$. Table~\ref{DokchitserSquaredTable} describes the relationship between $\delta_1$ and $\delta_2$ in terms of the reduction type of $E_i$.
			% \begin{center}
				%     \begin{tabular}{ll} \hline
					% Reduction type of $E_i/K_\fp$   $\qquad$  $\qquad$ &   $\delta_1, \delta_2$ \\ \hline
					% good   &$\delta_1 = \delta_2 = 0$ \\ \hline
					% multiplicative  & $\delta_1 = p \delta_2$  \\
					%        & $\delta_2 = p \delta_2$ \\ \hline
					% additive potentially multiplicative  & $\delta_2 = \delta_1 + \frac{p-1}{p} \nu(j_1)$ \\
					%                  & $\delta_2 = \delta_1 + \frac{p-1}{p} \nu(j_2)$ \\ \hline
					% additive potentially good $\nu(p) = 0$ &  $\delta_1 = \delta_2$ \\ \hline
					% additive potentially good $\nu(p) > 0$ &  \\ 
					% \hspace{2mm} potentially ordinary  &  $\delta_1 = \delta_2 $ \\
					% \hspace{2mm} potentially supersingular tame &  $\delta_2 = 12 - \delta_1$ \\
					% \hspace{2mm} potentially supersingular wild &  \hspace{3mm} ? \\ \hline
					% \end{tabular}
				% \end{center}
		\end{theorem}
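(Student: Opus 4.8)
The plan is to reduce the statement to three cases according to the potential reduction type of the curves, using that this is an isogeny invariant. Since $E_1$ and $E_2$ are $p$-isogenous, their $\ell$-adic Tate modules are isomorphic over $\Q_\ell$ for every rational prime $\ell$, so the action of the inertia subgroup of $\operatorname{Gal}(\overline{K_\fp}/K_\fp)$ on them agrees up to conjugacy; by the Néron--Ogg--Shafarevich criterion and its refinements, $E_1$ has good (respectively potentially multiplicative, respectively additive with potentially good) reduction if and only if $E_2$ does. This splits the proof into three cases: (i) good reduction, (ii) potentially multiplicative reduction, and (iii) additive reduction with potentially good reduction. In cases (ii) and (iii) one also records that the conductor is an isogeny invariant, which via Ogg's formula \eqref{Ogg} ties $\delta_i$ to the number of components $m_\fp$ and restricts the admissible Kodaira types.

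Case (i) is immediate: $\delta_1 = \delta_2 = 0$. For case (ii), I would pass to the extension over which $E_i$ acquires split multiplicative reduction --- this is at worst a quadratic extension, and is wildly ramified only when $p=2$ --- and use the Tate uniformization $E_i \cong E_{q_i}$, where $\nu(q_i)$ equals the valuation of the discriminant of the split multiplicative model. The $p$-torsion sits in an exact sequence $0 \to \mu_p \to E_q[p] \to \Z/p\Z \to 0$; the Galois-stable order-$p$ subgroups are $\mu_p$, with quotient $E_{q^p}$, and --- present over the base exactly when $q$ is a $p$-th power there --- the image of $\langle q^{1/p}\rangle$, with quotient $E_{q^{1/p}}$. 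Hence on the level of split-multiplicative discriminant valuations a $p$-isogeny multiplies or divides by $p$. Descending back to $K_\fp$, an unramified quadratic twist leaves $\nu(\Delta_\fp^{\mathrm{min}})$ unchanged while a ramified one shifts it by a fixed amount ($6$ when $p$ is odd), so $\{\delta_1,\delta_2\}$ takes the shape $\{n,pn\}$ or $\{n+c,pn+c\}$ with $c$ the twisting shift; the case $p=2$ needs separate care since the quadratic twist can interact with minimality.

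For case (iii), let $L/K_\fp$ be the minimal extension over which $E_1$ (equivalently $E_2$) acquires good reduction, and let $e$ be the tame ramification index of $L\cdot K_\fp^{\mathrm{unr}}$ over $K_\fp^{\mathrm{unr}}$, the semistability defect, so $e\in\{2,3,4,6\}$ away from residue characteristics $2$ and $3$ and is bounded in general. Since $v_L = e\cdot\nu$ on $K_\fp^\times$ and the $L$-minimal discriminant of $E_i$ is a unit, writing $\Delta_\fp(E_i) = u_i^{12}\Delta_L(E_i)$ forces $e\,\delta_i\equiv 0\pmod{12}$; combined with $\nu(j_i) = 3\nu(c_4(E_i)) - \delta_i \ge 0$ this confines each $\delta_i$ to a short list depending on $e$ and $\nu(j_i)$. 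To compare $\delta_1$ with $\delta_2$, I would use that over $L$ the isogeny $E_1\to E_2$ either reduces to a $p$-isogeny of the good-reduction models over the residue field (when $p$ is prime to the residue characteristic) or is governed by the connected--étale sequence of the $p$-divisible group (when $p$ equals the residue characteristic); in either case $E_1$ and $E_2$ become isomorphic over $L$ up to a twist that does not affect the discriminant valuation, so the difference between $\delta_1$ and $\delta_2$ comes only from the two $K_\fp$-models lying in different twisting classes. Running Tate's algorithm through the finite list of possibilities for $(e,\nu(j),\operatorname{typ}_\fp, \text{residue characteristic})$ then pins down the table entries.

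The main obstacle is exactly this last enumeration at primes dividing $2p$ and $3p$, where wild ramification intervenes: there $e$ is no longer confined to $\{2,3,4,6\}$, Tate's algorithm is delicate, and one must manipulate explicit minimal Weierstrass models rather than argue abstractly; a secondary subtlety is the quadratic-twist descent in case (ii) when $p=2$. Away from these small-residue-characteristic primes the argument is essentially formal --- potential reduction type is an isogeny invariant, the Tate curve settles the potentially multiplicative case, and base change to a tame extension with good reduction settles the potentially good case --- and the final tabulation amounts to bookkeeping over the finitely many reduction types.
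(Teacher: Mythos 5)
This is not a result the paper proves: Theorem~\ref{DokchitserSquared} is imported from \cite{DokchitserDokchitser2015} and stated without proof, so there is no ``paper's own proof'' to compare against. Your sketch is therefore a reconstruction of the Dokchitsers' argument, and as such it has a genuine gap.

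The case split by potential reduction type is correct (the potential type is an isogeny invariant via the $\ell$-adic Tate module for $\ell$ prime to the residue characteristic), and the Tate-curve analysis in case (ii) is the right idea. The problem is case (iii). You write that over the good-reduction extension $L$ ``$E_1$ and $E_2$ become isomorphic over $L$ up to a twist that does not affect the discriminant valuation,'' and you use this to conclude the difference in $\delta_1,\delta_2$ comes only from the twisting classes. This claim is false: two $p$-isogenous elliptic curves with good reduction are not in general isomorphic over any extension (take any non-CM $p$-isogenous pair with good reduction at $\fp\nmid p$). What is true, and what you actually need, is that over $L$ both have $\nu_L(\Delta^{\min})=0$; but that is a much weaker statement and does not by itself tie $\delta_1$ to $\delta_2$ over the base. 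The correct mechanism when $\nu(p)=0$ is that the inertia representations on $T_\ell(E_1)$ and $T_\ell(E_2)$ ($\ell$ prime to the residue characteristic) are literally isomorphic because the isogeny is an isomorphism on $T_\ell$, hence the semistability defect $e$ agrees, hence the Kodaira--N\'eron types agree (by the standard dictionary between $e$ and type for $\fp\nmid 6$), hence $\delta_1=\delta_2$. Your argument never uses this, and the substitute you offer does not work.

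For $\nu(p)>0$ the gap is wider: the potentially supersingular tame relation $\delta_2 = 12-\delta_1$ is exactly the case where the reduction types of $E_1$ and $E_2$ are \emph{different} (e.g.\ $\mathrm{II}$ versus $\mathrm{II}^{*}$), which is incompatible with any argument that concludes the two curves are twist-equivalent over $L$. This phenomenon is driven by the asymmetry of the $p$-isogeny on the $p$-divisible group in the supersingular case (there is no connected--\'etale splitting), and deriving the precise $12-\delta_1$ formula requires a genuine computation with the formal group or with explicit Kodaira-type bookkeeping, not the abstract reduction you sketch. Your closing remark that ``running Tate's algorithm through the finite list of possibilities then pins down the table entries'' is where the actual content of the theorem lives, and it cannot be waved through. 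Finally, a small point: the entry you want in the multiplicative row is $\{\delta_1,\delta_2\}=\{n,pn\}$, i.e.\ $\delta_1=p\delta_2$ or $\delta_2=p\delta_1$; the quadratic-twist shift you introduce in case (ii) never appears in the statement and would be a red herring if carried through.
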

		
		{\renewcommand*{\arraystretch}{1.2} \begin{longtable}{ll}
				\caption{Reduction types of $p$-isogenous elliptic curves $E_1$ and $E_2$ over $K_\fp$ and their minimal discriminant valuations $\delta_i$}\\
				\hline
				Reduction type of $E_i$   $\qquad$  $\qquad$ $\qquad$   &   $\delta_1, \delta_2$ \\
				\hline
				\endfirsthead
				\caption{The elliptic curve $\Ci:y^2=x^3+d^2 A_{p,i}(t)x+d^3 B_{p,i}(t)$}\\
				\hline
				Reduction type of $E_i$   $\qquad$  $\qquad$ &   $\delta_1, \delta_2$ \\
				\hline
				\endhead
				\hline
				\multicolumn{2}{r}{\emph{continued on next page}}
				\endfoot
				\hline
				\endlastfoot
				
				good   &$\delta_1 = \delta_2 = 0$ \\ \hline
				multiplicative  & $\delta_1 = p \delta_2$  \\
				& $\delta_2 = p \delta_2$ \\ \hline
				additive potentially multiplicative  & $\delta_2 = \delta_1 + \frac{p-1}{p} \nu(j_1)$ \\
				& $\delta_2 = \delta_1 + (p-1) \nu(j_1)$ \\ \hline
				additive potentially good $\nu(p) = 0$ &  $\delta_1 = \delta_2$ \\ \hline
				additive potentially good $\nu(p) > 0$ &  \\ 
				\hspace{4mm} potentially ordinary  &  $\delta_1 = \delta_2 $ \\
				\hspace{4mm} potentially supersingular tame &  $\delta_2 = 12 - \delta_1$ \\
				\hspace{4mm} potentially supersingular wild &  \hspace{3mm} ? 
				\label{DokchitserSquaredTable}
		\end{longtable}}
		%TODO: START HERE discuss how this doesn't magically solve all of our theorems.
		%TODO: (Aly) In particular, mention what D^2 does do for us: no ss disc twins and
		% what it doesn't do for us: we still need to check the additive cases
		As a consequence of Theorem~\ref{DokchitserSquared}, we obtain the following corollary pertaining to $p$-isogenous elliptic curves:
		
		\begin{corollary}
			\label{DokLemma}
			Let $E_{1}$ and $E_{2}$ be $p$-isogenous elliptic curves defined over a number
			field $K$ with potentially good reduction at $\mathfrak{p}$. If
			$\nu_{\mathfrak{p}}(p)=0$, then the $\mathfrak{p}$-adic valuation of the
			minimal discriminants of $E_{1}$ and $E_{2}$ at $\mathfrak{p}$ are equal, and
			they have the same Kodaira-N\'{e}ron type at $\mathfrak{p}$.
		\end{corollary}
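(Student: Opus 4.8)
The plan is to push everything down to the local field $K_{\fp}$ and then read the discriminant relation off Theorem~\ref{DokchitserSquared}, upgrading it to an equality of Kodaira--N\'{e}ron types by means of Ogg's formula~\eqref{Ogg}.

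First I would work locally. Via the embedding $\iota\colon K\hookrightarrow K_{\fp}$, the valuation $\nup(\Delta_{\fp}(E_i))$ of the minimal discriminant, the Kodaira--N\'{e}ron type $\operatorname{typ}_{\fp}(E_i)$, the component count $m_{\fp}$, and the conductor exponent $f_{\fp}$ are all invariants of $E_i/K_{\fp}$, so it suffices to prove the statement for the base changes of $E_1$ and $E_2$ to $K_{\fp}$. By hypothesis both curves have potentially good reduction at $\fp$; in particular neither has potentially multiplicative reduction, so each has good or additive reduction at $\fp$, and since good reduction is an isogeny invariant (N\'{e}ron--Ogg--Shafarevich), either both $E_1$ and $E_2$ have good reduction at $\fp$ or both have additive reduction at $\fp$.

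Next I would invoke Theorem~\ref{DokchitserSquared}. If both curves have good reduction, then $\delta_1=\delta_2=0$ and $\operatorname{typ}_{\fp}(E_1)=\operatorname{typ}_{\fp}(E_2)=I_0$, and there is nothing more to prove. Otherwise both have additive potentially good reduction, and because $\nup(p)=0$ the corresponding row of Table~\ref{DokchitserSquaredTable} yields $\delta_1=\delta_2$; write $\delta$ for this common value. It remains to match the Kodaira types. Since $E_1$ and $E_2$ are $p$-isogenous they have the same conductor, so $f_{\fp}(E_1)=f_{\fp}(E_2)$; plugging this and $\delta_1=\delta_2$ into Ogg's formula~\eqref{Ogg}, namely $f_{\fp}=\delta_i-m_{\fp}(E_i)+1$, forces $m_{\fp}(E_1)=m_{\fp}(E_2)$. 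Now among the Kodaira types that occur for a curve with potentially good reduction---$I_0,\ II,\ III,\ IV,\ I_0^{*},\ IV^{*},\ III^{*},\ II^{*}$, whose special fibres have $1,1,2,3,5,7,8,9$ irreducible components respectively---the type is determined by $m_{\fp}$ together with whether $\delta>0$, the latter being needed only to separate $I_0$ from $II$. Since $E_1$ and $E_2$ share both $m_{\fp}$ and $\delta$, they have the same Kodaira--N\'{e}ron type at $\fp$.

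I expect the only genuine subtlety to be the bookkeeping in the appeal to Theorem~\ref{DokchitserSquared}: one must verify that the potentially-good hypothesis really places us in the ``good'' or ``additive potentially good $\nu(p)=0$'' rows of Table~\ref{DokchitserSquaredTable} and never in a (potentially) multiplicative row, and that the ``$\nu(p)=0$'' row applies verbatim here because $\fp\nmid p$. The remaining ingredients---equality of conductors for isogenous curves, Ogg's formula, and the dictionary between Kodaira types and component counts---are standard and, in particular, require no separate analysis in residue characteristic $2$ or $3$.
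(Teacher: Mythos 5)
Your route is genuinely different from the paper's. The paper's proof is essentially two citations: it reads off $\delta_1=\delta_2$ from Theorem~\ref{DokchitserSquared}, and then cites \cite[Table~1]{DokchitserDokchitser2015} directly for the statement that the Kodaira--N\'eron types agree. You instead try to \emph{derive} the equality of Kodaira types from equality of conductors, Ogg's formula, and the dictionary between Kodaira symbols and component counts. That last step is where the gap is.

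Your enumeration of potentially-good Kodaira types---$I_0,\,{\rm II},\,{\rm III},\,{\rm IV},\,{\rm I}_0^*,\,{\rm IV}^*,\,{\rm III}^*,\,{\rm II}^*$, pinned down by $m_{\fp}$ together with the sign of $\delta$---is only valid when the residue characteristic of $\fp$ is not $2$ (Proposition~\ref{Lorenzini} is stated for $p\neq 2$, and the paper explicitly flags the failure at $2$, citing the curve 32.a4 with $j=1728$ and type ${\rm I}_3^*$). At primes above $2$, types ${\rm I}_n^*$ with $n>0$ can occur with potentially good reduction, and they collide with your list on component counts: ${\rm I}_2^*$ has $7$ components like ${\rm IV}^*$, ${\rm I}_3^*$ has $8$ like ${\rm III}^*$, ${\rm I}_4^*$ has $9$ like ${\rm II}^*$. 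Example~\ref{lemmafail} in the paper exhibits two isogenous potentially good curves at a prime above $2$ with the same minimal discriminant valuation and conductor exponent, yet types ${\rm II}^*$ and ${\rm I}_4^*$. That example has $p=2$ and $\fp\mid 2$, so it is not itself a counterexample to the corollary, but it \emph{is} a counterexample to your step ``same $m_{\fp}$ and same $\delta$ implies same type,'' which is where your argument hands over to Ogg's formula and forgets the hypothesis $\nu_{\fp}(p)=0$. In short, the hypothesis $\fp\nmid p$ must be used to rule out the ${\rm I}_n^*$ collisions, and your argument doesn't use it in the final step; the cited Table~1 of Dokchitser--Dokchitser does the work that your component-count argument cannot. (Your closing remark that no separate analysis is needed in residue characteristic $2$ is exactly the point at which the argument breaks.) The rest---localization, N\'eron--Ogg--Shafarevich to sort into good vs.\ additive, $\delta_1=\delta_2$ from the relevant row of the table, and $m_{\fp}(E_1)=m_{\fp}(E_2)$ from Ogg---is fine, and indeed this is exactly the strategy the paper uses in Lemma~\ref{KNforpeq3} for $\fp\mid 3$, where the enumeration you used is valid.
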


		\begin{proof}
			Let $\delta_{1}$ and $\delta_{2}$ denote the $\mathfrak{p}$-adic valuations of
			the $\mathfrak{p}$-minimal discriminants of $E_{1}$ and $E_{2}$, respectively.
			By assumption, $E_{1}$ and $E_{2}$ have potentially good reduction at
			$\mathfrak{p}$ and $\nu_{\mathfrak{p}}(p)=0$. From Theorem
			\ref{DokchitserSquared}, we deduce that $\delta_{1}=\delta_{2}$. In addition,
			\cite[Table 1]{DokchitserDokchitser2015} establishes that, in this case, the
			Kodaira-N\'{e}ron types of $E_{1}$ and $E_{2}$ are the same.
		\end{proof}

		%P35
		Next, we state a result that gives a criterion for elliptic curves to have potentially good reduction. 
		%\manami{Do we need to use the notation $E_{\fp}$ here? Also, $\Delta_{\fp}$ notation is ambiguous, it means discriminant of $E/K_{\fp}$}
		\begin{proposition}[{\cite[Sect.\ VII, Proposition~5.5]{Silverman1994}}]
			\label{prop:integralj}
			%Proposition 5.5 VII.5 Silverman
			An elliptic curve $E$ defined over $K_{\fp}$ has potentially good reduction if and only if its $j$-invariant is integral.
		\end{proposition}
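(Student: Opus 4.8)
The plan is to prove both implications, using base change to a field of good reduction for the forward direction and the semistable dichotomy for the converse. Throughout, if $L/K_{\fp}$ is a finite extension with ramification index $e$, I will use that the normalized valuation $\nu_L$ on $L$ restricts to $e\cdot\nup$ on $K_{\fp}$, so that the sign of $\nu_L(x)$ agrees with that of $\nup(x)$ for any $x\in K_{\fp}$.

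\textbf{Forward direction.} Suppose $E/K_{\fp}$ has potentially good reduction, so there is a finite extension $L/K_{\fp}$ over which $E$ acquires good reduction. Pick an $L$-minimal Weierstrass model of $E$. Since $E/L$ has good reduction, $\nu_L(\Delta)=0$ while $\nu_L(c_4)\geq 0$, and hence
\[
\nu_L(j)=\nu_L\!\left(\frac{c_4^{3}}{\Delta}\right)=3\,\nu_L(c_4)\geq 0.
\]
As $j\in K_{\fp}$, this gives $\nup(j)=e^{-1}\nu_L(j)\geq 0$, i.e.\ $j$ is integral.

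\textbf{Converse.} Suppose $\nup(j)\geq 0$. By the semistable reduction theorem, after passing to a finite extension $E$ acquires either good or multiplicative reduction; equivalently, $E/K_{\fp}$ has either potentially good or potentially multiplicative reduction. It therefore suffices to rule out the potentially multiplicative case. If $E$ had potentially multiplicative reduction, then over some finite extension $L/K_{\fp}$ it would be isomorphic to a Tate curve $E_q$ with $\nu_L(q)>0$, whose $j$-invariant satisfies $j=q^{-1}+744+196884\,q+\cdots$, so $\nu_L(j)=-\nu_L(q)<0$. Then $\nup(j)=e^{-1}\nu_L(j)<0$, contradicting integrality of $j$. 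Hence $E$ has potentially good reduction, completing the proof.

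\textbf{Main obstacle.} The routine part is the valuation bookkeeping; the substantive inputs are the dichotomy in the converse (every elliptic curve over a local field is potentially good or potentially multiplicative) and the Tate parameterization giving the $q$-expansion $j=q^{-1}+744+\cdots$, both standard facts that one may cite. The one point requiring care is the comparison $\nu_L|_{K_{\fp}}=e\cdot\nup$, so that the negativity of $\nu_L(j)$ genuinely transfers to $\nup(j)$; with that in hand, no calculation is needed.
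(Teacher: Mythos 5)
The paper does not give a proof of this proposition at all: it is stated with a citation to Silverman, and the result is used as a black box. So there is no in-paper argument to compare against, only Silverman's. Your proof is correct, and it takes a route that differs from Silverman's. Silverman's argument for the converse is elementary in the sense that it avoids the semistable reduction theorem: after a controlled finite extension he puts $E$ in Legendre form $y^2 = x(x-1)(x-\lambda)$, shows that integrality of $j$ forces $\lambda$ to be a unit with $\lambda \not\equiv 1$, and reads off good reduction directly from that model. You instead invoke the semistable reduction theorem to reduce to the dichotomy good/multiplicative, and then kill the multiplicative case by passing to a Tate curve $E_q$ and using $\nu_L(j) = -\nu_L(q) < 0$. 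Your approach is cleaner and shorter once one grants semistable reduction and the Tate parameterization, but it leans on heavier machinery; one should just be aware that the semistable reduction theorem for elliptic curves must be established independently of this proposition (as it is in Silverman, via the action of inertia on torsion), so no circularity arises. Both directions of your argument are sound, including the forward direction's use of $\nu_L(\Delta) = 0$ for a minimal model at a prime of good reduction and the careful observation that $\nu_L$ and $e\cdot\nu_{\mathfrak p}$ agree on $K_{\mathfrak p}$.
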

		
		%P37
		Let $E$ be an elliptic curve over a number field $K$. We say that $E$ has \textit{everywhere potentially good reduction} if $\nup(j(E))\geq 0$ for each prime $\fp$ of $K$. In particular, $j(E) \in \OK$.
		It is useful to recall which Kodaira-N\'eron types occur with potentially good reduction.
		
		\begin{proposition}[{\cite[pg. 365]{Silverman2009}, \cite[pg. 42]{Lorenzini}}]
			\label{Lorenzini}
			Let $p \neq 2$ be a prime, $K$ be a number field, and $\fp$ a prime of $K$ dividing $p$. If $E$ is an elliptic curve defined over $K$ with additive reduction at $\fp$, then $E$ has potentially good reduction at $\fp$ if and only if its Kodaira-N\'{e}ron type at $\fp$ is one of ${\rm II}, {\rm II}^*, {\rm III}, {\rm III}^*, {\rm IV}, {\rm IV}^*$ or ${\rm I}_0^*$, and $E$ has potentially multiplicative reduction if and only if $\operatorname{typ}_{\fp}(E)={\rm I}_n^*$ for some $n > 0$.
		\end{proposition}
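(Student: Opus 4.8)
The plan is to combine the semistable reduction theorem with Proposition~\ref{prop:integralj} and the discriminant/$c_4$ exponent data from Tate's algorithm. First I would record the structural dichotomies that make the four-fold equivalence cheap. The complete list of \emph{additive} Kodaira--N\'eron types splits as $\mathcal{G} = \{{\rm II},{\rm III},{\rm IV},{\rm I}_0^*,{\rm IV}^*,{\rm III}^*,{\rm II}^*\}$ and $\mathcal{M} = \{{\rm I}_n^* : n \geq 1\}$, so if $E$ has additive reduction at $\fp$ then exactly one of $\operatorname{typ}_{\fp}(E)\in\mathcal{G}$ or $\operatorname{typ}_{\fp}(E)\in\mathcal{M}$ holds; and, by the semistable reduction theorem (equivalently, Proposition~\ref{prop:integralj} together with Tate curve theory), $E/K_\fp$ has exactly one of potentially good or potentially multiplicative reduction. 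Hence it suffices to prove the two one-way implications ``$\operatorname{typ}_{\fp}(E)\in\mathcal{G}\Rightarrow E$ has potentially good reduction'' and ``$\operatorname{typ}_{\fp}(E)\in\mathcal{M}\Rightarrow E$ has potentially multiplicative reduction''; the converses then follow formally by contraposition against the two dichotomies.

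For both implications the engine is the identity $\nup(j)=3\nup(c_4)-\nup(\Delta_\fp)$, valid on any $\fp$-minimal model since $j=c_4^3/\Delta$, together with Proposition~\ref{prop:integralj}. For the first implication I would run through the seven types of $\mathcal{G}$ and read off from Tate's algorithm the exact value of $\nup(\Delta_\fp)$ and a lower bound for $\nup(c_4)$ on a minimal model --- for instance $\nup(\Delta_\fp)=2,\ \nup(c_4)\geq 1$ for ${\rm II}$; $\nup(\Delta_\fp)=3,\ \nup(c_4)\geq 1$ for ${\rm III}$; $\nup(\Delta_\fp)=6,\ \nup(c_4)\geq 2$ for ${\rm I}_0^*$; $\nup(\Delta_\fp)=8,\ \nup(c_4)\geq 3$ for ${\rm IV}^*$; $\nup(\Delta_\fp)=10,\ \nup(c_4)\geq 4$ for ${\rm II}^*$, and similarly for ${\rm IV}$ and ${\rm III}^*$ --- and observe that in every case $3\nup(c_4)\geq\nup(\Delta_\fp)$, so $\nup(j)\geq 0$, i.e.\ $j$ is integral, and Proposition~\ref{prop:integralj} gives potentially good reduction. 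For the second implication, if $\operatorname{typ}_{\fp}(E)={\rm I}_n^*$ with $n\geq 1$ then Tate's algorithm (here using $p\neq 2$) gives $\nup(\Delta_\fp)=n+6$ and $\nup(c_4)=2$ on a minimal model, so $\nup(j)=6-(n+6)=-n<0$; thus $j$ is non-integral, $E$ fails to have potentially good reduction by Proposition~\ref{prop:integralj}, and by the dichotomy it has potentially multiplicative reduction.

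The main obstacle is pinning down the Tate-algorithm data at the primes $\fp\mid p$ themselves, where $E$ may be wildly ramified --- in particular justifying $\nup(c_4)=2$ for type ${\rm I}_n^*$ with $n\geq 1$, which is precisely where the hypothesis $p\neq 2$ enters. Over a prime above $3$ the extension trivializing an ${\rm I}_n^*$ curve is the tamely ramified quadratic twist of a Tate curve, which forces $\nup(c_4)=2$ and $\nup(\Delta_\fp)=n+6$ on the minimal model; over a prime above $2$ that twist is wild, $\nup(c_4)$ can exceed $2$, and the claimed equivalence genuinely fails. If one prefers to avoid this case analysis, the cleaner route is the structural one: a curve with additive but potentially good reduction acquires good reduction over an extension of degree $2,3,4$ or $6$ (according to the order of the image of inertia in the automorphism group of the reduction), and the value of that degree forces $\operatorname{typ}_{\fp}(E)$ into $\mathcal{G}$; while a curve with potentially multiplicative reduction is a quadratic twist of the Tate curve $E_q$, producing type ${\rm I}_n^*$ with $n=-\nup(j)$ exactly when the twist is ramified, i.e.\ exactly when the reduction is additive --- and here too $p\neq 2$ is what makes the quadratic-twist bookkeeping behave.
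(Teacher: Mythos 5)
The paper does not give a proof of this proposition; it cites it from Silverman and Lorenzini. Your proposal therefore supplies a proof of a cited background fact, which is fine, but two points should be flagged.

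Your logical architecture --- two exhaustive-and-exclusive dichotomies (additive types split as $\mathcal{G}\sqcup\mathcal{M}$; reduction is exactly one of potentially good or potentially multiplicative), reducing the biconditional to the two forward implications --- is sound, but note you genuinely need \emph{both} implications: from ``$\mathcal{M}\Rightarrow$ pot.\ mult.'' plus the dichotomies you recover ``pot.\ good $\Rightarrow\mathcal{G}$,'' but you cannot recover ``$\mathcal{G}\Rightarrow$ pot.\ good'' without establishing it or its contrapositive. The Tate-algorithm route you lead with has a real gap exactly at the primes above $3$. The discriminant exponents you quote for the types in $\mathcal{G}$ are the tame values; by Ogg's formula $\nup(\Delta_\fp)=f_\fp+m_\fp-1$, and at $\fp\mid 3$ the conductor of ${\rm II},{\rm III},{\rm IV},{\rm IV}^*,{\rm III}^*,{\rm II}^*$ can carry a wild term $\delta_\fp>0$, so $\nup(\Delta_\fp)$ exceeds the value you list. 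You correctly flag wild ramification as a danger, but only for $I_n^*$ and the excluded prime $2$; the same issue makes the ``exact'' discriminant values in your $\mathcal{G}$-table false at $\fp\mid 3$, and the inequality $3\nup(c_4)\geq\nup(\Delta_\fp)$ no longer follows from bare bookkeeping.

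Your structural alternative closes this gap cleanly and is essentially what the cited references do: over odd residue characteristic, a curve with additive potentially multiplicative reduction is exactly a ramified quadratic twist of a Tate curve $I_n$, and such a twist has type $I_n^*$. This gives the biconditional ``$\mathcal{M}\Leftrightarrow$ pot.\ mult.\ additive'' in one stroke, from which ``$\mathcal{G}\Leftrightarrow$ pot.\ good'' follows by the dichotomies. The half-sentence about acquiring good reduction over an extension of degree $2,3,4,6$ is not needed for the argument and also requires more care at $p=3$, where $\operatorname{Aut}(\widetilde{E})$ can have order $12$; you can drop it. I would recommend leading with the Tate-curve route rather than the valuation table, whose entries are only correct in the tame case.
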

		
		%P38
		When $\fp$ divides $2$, this is not the case.  There are elliptic curves with potentially good additive reduction and Kodaira-N\'{e}ron type ${\rm I}_n^*$ for $n > 0$ at primes above $2$.  For example, the curve with LMFDB label \href{https://www.lmfdb.org/EllipticCurve/Q/32/a/4}{32.a4} has 
		potentially good reduction since its $j$-invariant is $1728$, yet its Kodaira-N\'eron type is ${\rm I}_3^*$ at $p = 2$. Also, see Example~\ref{lemmafail}.
		
		%P39
		Let $E_{1}$ and $E_{2}$ be $p$-isogenous discriminant ideal twins over a number field $K$. The following result shows that $E_{1}$ and $E_{2}$ have everywhere potentially good reduction.
		\begin{lemma}\label{lemmapotgoodred}
			Let $E_1$ and $E_2$ be $p$-isogenous discriminant ideal twins over some number field~$K$ for some rational prime $p.$ Then $E_1$ and $E_2$ have everywhere potentially good reduction. In particular, $j(E_1),j(E_2) \in \OK$.
		\end{lemma}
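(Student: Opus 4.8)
The plan is to argue by contradiction at a single prime $\fp$ where one of the curves fails to have potentially good reduction, and to use Theorem~\ref{DokchitserSquared} together with Proposition~\ref{prop:integralj} to derive a contradiction with the equality of minimal discriminant ideals or conductors. Since $E_1$ and $E_2$ are $p$-isogenous, they are isogenous, hence have the same conductor; being discriminant ideal twins, they additionally have the same minimal discriminant ideal $\minD$. Fix a prime $\fp$ and suppose $E_1$ (equivalently $E_2$, since isogenous curves share the property of having potentially good reduction at $\fp$ — both have integral $j$-invariant or both do not, by Proposition~\ref{prop:integralj} and the fact that potential good reduction is isogeny-invariant) does \emph{not} have potentially good reduction at $\fp$. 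Then at $\fp$ the curves have either multiplicative or potentially multiplicative (additive) reduction.

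First I would handle the multiplicative case: here Theorem~\ref{DokchitserSquared} gives $\delta_1 = p\,\delta_2$ or $\delta_2 = p\,\delta_1$ for the minimal discriminant valuations $\delta_i = \nup(\Delta_\fp(E_i))$. Since $E_1,E_2$ are \emph{not} $K$-isomorphic and multiplicative reduction forces $\delta_i \geq 1$, we get $\delta_1 \neq \delta_2$, contradicting $\nup(\minD_{E_1}) = \nup(\minD_{E_2})$. (One must note $\delta_i \geq 1$ at a prime of multiplicative reduction, so $p\delta_2 = \delta_2$ is impossible for $p \geq 2$.) Next, the additive potentially multiplicative case: by Proposition~\ref{Lorenzini} this forces $\operatorname{typ}_\fp = \mathrm{I}_n^*$ with $n > 0$ (at least when $\fp \nmid 2$; for $\fp \mid 2$ one still has $\nup(j) < 0$ by definition of potentially multiplicative reduction). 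Theorem~\ref{DokchitserSquared} then gives $\delta_2 = \delta_1 + \tfrac{p-1}{p}\nup(j_1)$ or $\delta_2 = \delta_1 + (p-1)\nup(j_1)$. Since potentially multiplicative (non-potentially-good) reduction means $\nup(j_1) < 0$, in both subcases $\delta_1 \neq \delta_2$, again contradicting the equality of minimal discriminant ideals. Actually a cleaner route avoiding the $\fp \mid 2$ subtlety in Proposition~\ref{Lorenzini} is to invoke Proposition~\ref{prop:integralj} directly: potentially multiplicative but not potentially good reduction is exactly $\nup(j) < 0$, and then the Dokchitser--Dokchitser relations immediately give $\delta_1 \neq \delta_2$.

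Combining the two cases, at every prime $\fp$ of $K$ the curve $E_1$ (hence $E_2$) must have potentially good reduction — otherwise the $\fp$-component of the minimal discriminant ideal would differ between $E_1$ and $E_2$. Therefore $E_1$ and $E_2$ have everywhere potentially good reduction, and by Proposition~\ref{prop:integralj} applied at each $\fp$, $\nup(j(E_i)) \geq 0$ for all $\fp$, i.e. $j(E_1), j(E_2) \in \OK$.

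The main obstacle I anticipate is the bookkeeping for primes above $2$, where Proposition~\ref{Lorenzini} does not directly classify the Kodaira type for potentially good versus potentially multiplicative additive reduction (as the paper explicitly warns, with LMFDB curve \href{https://www.lmfdb.org/EllipticCurve/Q/32/a/4}{32.a4} as a cautionary example). The fix is to sidestep the Kodaira classification entirely and phrase the dichotomy purely in terms of $\nup(j)$: the only way to fail potentially good reduction is $\nup(j) < 0$, which puts us in one of the first three rows of Table~\ref{DokchitserSquaredTable} (multiplicative or additive potentially multiplicative), and in every one of those rows the tabulated relation between $\delta_1$ and $\delta_2$ is a strict inequality once $\nup(j) < 0$ (using also $\delta_i \geq 1$ in the multiplicative rows). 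A secondary, very minor point to verify is that the ``potentially supersingular wild'' row with the unknown relation cannot arise here — but that row is a potentially good reduction case, so it is irrelevant to this lemma and only matters later in the paper.
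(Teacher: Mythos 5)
Your proposal is correct and follows essentially the same route as the paper's proof: isogenous curves share a conductor (hence reduction type), discriminant ideal twins share $\delta_1=\delta_2$ at every prime, and the Dokchitser--Dokchitser relations in Theorem~\ref{DokchitserSquared} then rule out the multiplicative and additive potentially-multiplicative rows, leaving only (potentially) good reduction; Proposition~\ref{prop:integralj} then gives $j(E_i)\in\OK$. The paper's own proof simply says ``it follows from Theorem~\ref{DokchitserSquared}'' without unwinding the cases, so the only difference is that you spell out the case analysis (using $\delta_i\geq 1$ in the multiplicative rows and $\nup(j_1)<0$ in the additive potentially-multiplicative rows) and you explicitly note that the dichotomy is best phrased via $\nup(j)$ rather than via Kodaira types, which cleanly avoids the $\fp\mid 2$ caveat in Proposition~\ref{Lorenzini}.
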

		
		\begin{proof}
			As $E_1$ and $E_2$ are $p$-isogenous elliptic curves, they have the same conductor, and thus, at every prime, they have the same reduction type. As they have the same minimal discriminant ideals, they have the same minimal discriminant valuations at all primes. Then it follows from Theorem~\ref{DokchitserSquared} that their reduction types are either good or additive potentially good reduction at each prime $\fp$. Thus, $E_1$ and $E_2$ have everywhere potentially good reduction by Proposition~\ref{prop:integralj}. Consequently, $j(E_1),j(E_2) \in \OK$.
		\end{proof}
		
		%%%%%%%%%%%%%%%%%%%%%%%%%%%%%%%%%%%%%%%%%%%
		\subsection{Isogenous families of elliptic curves}
		\label{sec:isofamily}
		Let $p$ be a prime such that the classical modular curve $X_0(p)$ has genus zero. Then $p=2,3,5,7,$ or $13$. For these primes, let $\Ci$ for $i=1,2$ be as defined in Table~\ref{ta:curves}. The following result explicitly classifies all $p$-isogenous elliptic curves over a field of characteristic zero.
		
		\begin{theorem}[{Barrios \cite[Theorem~1]{Bariso}}] \label{parafamilies}
			Let $p\in\left\{  2,3,5,7,13\right\}  $ and let $K$ be a number field or local field with ring of integers $\OK$. Let $E_1$ and $E_2$ be elliptic curves defined over $K$ such that the $j$-invariants of $E_1$ and $E_2$ are not both identically $0$ or $1728$. If $E_1$ and $E_2$ are $p$-isogenous elliptic curves over $K$, then there are $t_0 \in K$ and $d_0 \in \OK$ such that $E_1$ and $E_2$ are $K$-isomorphic to $\Coo$ and $\Cto$, respectively.
		\end{theorem}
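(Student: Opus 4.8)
The plan is to reduce the statement to a pair of explicit $j$-line computations. Since $E_1$ and $E_2$ are $p$-isogenous over $K$, the pair $(E_1,\ker\varphi)$ determines a non-cuspidal $K$-rational point on $X_0(p)$, and for $p\in\{2,3,5,7,13\}$ the curve $X_0(p)$ has genus $0$ with a rational point (the cusp), hence $X_0(p)\cong\mathbb{P}^1_K$. Fix the standard Fricke/Hauptmodul $t$ as the coordinate on this $\mathbb{P}^1$; the parameterization in \cite{Bariso} is built so that the two natural degree-one-on-the-other maps $X_0(p)\to X(1)$ (forgetting the subgroup, resp.\ quotienting by it) are given by explicit rational functions $j_{p,1}(t)$ and $j_{p,2}(t)$. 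Concretely, $\Ci$ is the curve $y^2 = x^3 + d^2 A_{p,i}(t)x + d^3 B_{p,i}(t)$ of Table~\ref{ta:curves}, and one checks from the $c_4,c_6$ formulas that $j(\Ci) = j_{p,i}(t)$ with $j_{p,i}$ the classical Fricke expressions; this is exactly the content recorded in \cite[Ch.~2.8]{MR3838339}, \cite[Table~3]{MR3084348}.

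The first step is therefore purely formal: given the $K$-rational point on $X_0(p)$ attached to $(E_1,\ker\varphi)$, pull it back to a parameter value $t_0\in K\cup\{\infty\}$ on $\mathbb{P}^1_K$. One must argue $t_0\neq\infty$: the fibre over $\infty$ corresponds to the cusp(s), and the hypothesis that $E_1$ is an honest elliptic curve (not a degenerate fibre) rules this out, so $t_0\in K$. By construction $j(E_1)=j_{p,1}(t_0)=j(\Cobo)$ and, since the isogeny sends the point to the one with the roles of $E_1$ and its quotient swapped (or uses the Atkin--Lehner involution, which is again a rational function of $t$), $j(E_2)=j_{p,2}(t_0)=j(\Ctbo)$. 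Two elliptic curves over $K$ with the same $j$-invariant become isomorphic over $\overline{K}$, and in fact over $K$ after a quadratic (or, for $j=0,1728$, sextic/quartic) twist; the free twisting parameter $d$ in $\Ci$ realizes precisely this family of twists. So one chooses $d_0\in\OK$ with $\Cibo\cong_K E_i$. The hypothesis that $j(E_1),j(E_2)$ are not both $\{0,1728\}$ is what guarantees the generic twist structure (a $1$-dimensional torsor under $K^\times/(K^\times)^2$) applies simultaneously to both curves with the \emph{same} $d_0$ — this compatibility is the crux, and is arranged in \cite{Bariso} by the specific choice of models so that a change of $d$ acts as the same quadratic twist on $\Cob$ and $\Ctb$.

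The main obstacle is this last compatibility: a priori one gets $d_0$ for $E_1$ and a possibly different $d_0'$ for $E_2$, and one must show they can be taken equal. The resolution is that the isomorphism class of the \emph{pair} $(E_1\to E_2)$ (isogeny and all) is what lives on $X_0(p)$, and the $\mathbb{G}_m$-action $d\mapsto \lambda d$ on the parameterized family corresponds to twisting the whole isogeny diagram by $\lambda$; hence fixing the twist of $E_1$ forces the twist of $E_2$. Writing $u\in\overline{K}^\times$ for the isomorphism $\Cobo\to E_1$ and using that the isogeny $\varphi$ is defined over $K$, a short Galois-cocycle / $c_4,c_6$-comparison argument shows the induced isomorphism $\Ctbo\to E_2$ is also defined over $K$, i.e.\ the same $d_0$ works. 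Also one must handle the degenerate cases where $A_{p,i}(t_0)$ or $B_{p,i}(t_0)$ vanishes: there $j(E_i)\in\{0,1728\}$, but since not both are, one is in the ``not both identically'' regime and the twisting argument still goes through for the non-special member and extends to the special one by the isogeny. Everything else — verifying $j(\Ci)=j_{p,i}(t)$, checking $\Ci$ is smooth for generic $t$, checking $\Cob$ and $\Ctb$ are genuinely $p$-isogenous via the kernel polynomial of \cite{CremWat,Tsukazaki} — is a finite explicit computation for each of the five primes, which I would simply cite from \cite{Bariso}.
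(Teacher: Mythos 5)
The paper does not give a proof of Theorem~\ref{parafamilies}; the result is imported verbatim from Barrios~\cite[Theorem~1]{Bariso}, and the only local contribution is the Remark immediately following, which observes that the twist class $d_0\in K^\times/(K^\times)^2$ of loc.\ cit.\ can always be represented by an element of $\mathcal{O}_K$ (since multiplying $d_0$ by a square does not change the $K$-isomorphism class of $\mathcal{C}_{p,i}(t_0,d_0)$). So there is no internal proof to compare your sketch against, and citation alone would have sufficed here.

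As a blind reconstruction of the likely argument in~\cite{Bariso}, your sketch captures the right structure: realize $X_0(p)\cong\mathbb{P}^1_K$ via a Fricke coordinate, pull back the moduli point $(E_1,\ker\varphi)$ to $t_0\in K$, match $j$-invariants against $j_{p,i}(t_0)$, and then argue that a single twisting parameter $d_0$ can be chosen for both curves. Two points need tightening. First, the twist-compatibility step is stated as an assertion (``a short Galois-cocycle / $c_4,c_6$-comparison argument'') without being carried out; the actual mechanism is that for the curve of the pair whose $j$-invariant is not $0$ or $1728$ one has $\operatorname{Aut}_{\overline{K}}(E,C)=\{\pm1\}$, so the $K$-forms of the \emph{pair} are a torsor under $K^\times/(K^\times)^2$, and the family $(\mathcal{C}_{p,1}(t_0,d),\mathcal{C}_{p,2}(t_0,d))_d$ realizes exactly that torsor on both sides simultaneously. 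This is why the hypothesis ``not both $0$ or $1728$'' is exactly what is needed, and you should say so rather than only gesturing at it. Second, the statement requires $d_0\in\mathcal{O}_K$, not merely $d_0\in K^\times$; you do not address this, and it is the one thing the paper adds (via its Remark) on top of the cited theorem.
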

		%P41
		
		\begin{remark}
			We note that \cite[Theorem 1]{Bariso} and Theorem~\ref{parafamilies} differ slightly in that the latter is stated more generally for any field $K$ of characteristic relatively prime to $6p$, and $d_0 \in K^\times/(K^\times)^2$. In this article, we will only consider number fields or local fields. In this setting, if $d_0 \in K^\times/(K^\times)^2$ and $d_0 \not \in \mathcal{O}_K$, then we may take a different representative $d_0' \in \mathcal{O}_K$ such that $\mathcal{C}_{p,i}(t_0,d_0)$ is $K$-isomorphic to $\mathcal{C}_{p,i}(t_0,d_0')$.
		\end{remark}
		
		{\renewcommand*{\arraystretch}{1.2} \begin{longtable}{ccC{2.05in}C{3.0in}}
				\caption{The elliptic curve $\Ci:y^2=x^3+d^2 A_{p,i}(t)x+d^3 B_{p,i}(t)$}\\
				\hline
				$p$ & $i$ & $A_{p,i}(t)$ & $B_{p,i}(t)$\\
				\hline
				\endfirsthead
				\caption{The elliptic curve $\Ci:y^2=x^3+d^2 A_{p,i}(t)x+d^3 B_{p,i}(t)$}\\
				\hline
				$p$ & $i$ & $A_{p,i}(t)$ & $B_{p,i}(t)$ \\
				\hline
				\endhead
				\hline
				\multicolumn{4}{r}{\emph{continued on next page}}
				\endfoot
				\hline
				\endlastfoot

				$2$ & $1$ & $  -27 (64 + t) (256 + t) $ & $   -54 (-512 + t)  (64 + t)^2  $ \\\cmidrule{2-4}
				& $2$ & $ -432 (16 + t) (64 + t)  $ & $ -3456 (-8 + t) (64 + t)^2  $ \\\hline
				
				$3$ & $1$ & $   -3 (243 + t) (27 + t)^3 $ & $    -2 (27 + t)^4 (19683 + 486t - t^2) $   \\\cmidrule{2-4}
				& $2$ & $ -243 (  3 + t) (27 + t)^3 $ & $ -1458 (27 + t)^4 (   27 - 18t - t^2) $ \\\hline
				
				$5$ & $1$ & $    -27 (125 + 22t + t^2) (3125 + 250t + t^2)  $ & $     -54 (15625 + 500t - t^2) (125 + 22t + t^2)^2 $\\\cmidrule{2-4}
				& $2$ & $ -16875 (  5 + 10t + t^2) ( 125 +   22t + t^2)  $ & $ -843750 (    1 -   4t - t^2) (125 + 22t + t^2)^2 $\\\hline
				
				$7$ & $1$ & $    -27 (49 + 13t + t^2) (2401 + 245t + t^2) $ & $      -54 (49 + 13t + t^2) (823543 + 235298t + 21609t^2 + 490t^3 - t^4) $  \\\cmidrule{2-4}
				& $2$ & $ -64827 ( 1 +  5t + t^2) (  49 +  13t + t^2) $ & $ -6353046 (49 + 13t + t^2) (     7 -     70t -    63t^2 -  14t^3 - t^4)  $  \\\hline
				
				$13$ & $1$ & $        -27 (13 + 5t + t^2) (13 + 6t + t^2)   (28561 + 15379t + 3380t^2 + 247t^3 + t^4) $ 
				& $        -54 (13 + 5t + t^2) (13 + 6t + t^2)^2 (4826809 + 3712930t + 1313806t^2 + 237276t^3 + 20618t^4 + 494t^5 - t^6)  $  \\\cmidrule{2-4}
				& $2$ & $    -771147 (13 + 5t + t^2) (13 + 6t + t^2)   (    1 +    19t +   20t^2 +   7t^3 + t^4) $  
				& $ -260647685 (13 + 5t + t^2) (13 + 6t + t^2)^2 (      1 -      38t -     122t^2 -    108t^3 -    46t^4 -  10t^5 - t^6) $
				\label{ta:curves}
		\end{longtable}}
		
		%P42
		Table~\ref{ta:invariants} gives the $j$-invariants and discriminants of $\mathcal{C}_{p,i}(t,d)$, which will be helpful in the work that follows. As $d$ is a twisting parameter, we see that $j_{p, i}(t) = j_{p, i}(t, d)$ does not depend on $d.$
		%\aly{I removed $d$ as a parameter for the $j$-invariants.}
		
		{\renewcommand*{\arraystretch}{1.4} 
			\begin{longtable}{ccC{3.35in}C{2.1in}}
				\caption{The quantities $j_{p,i}(t)$ and $2^{-12} 3^{-12}d^{-6} \Delta_{p,i}(t,d)$} \label{ta:invariants}\\
				\hline
				$p$ & $i$ & $j_{p,i}(t, d)$ & $2^{-12} 3^{-12}d^{-6} \Delta_{p,i}(t,d)$\\
				\hline
				\endfirsthead
				\caption{The quantities $j_{p,i}(t, d)$ and $\Delta_{p,i}(t,d)d^{-6} 2^{-12} 3^{-12}$}\\
				\hline
				$p$ & $i$ & $j_{p,i}(t, d)$ & $\Delta_{p,i}(t,d)d^{-6} 2^{-12} 3^{-12}$ \\
				\hline
				\endhead
				\hline
				\multicolumn{4}{r}{\emph{continued on next page}}
				\endfoot
				\hline
				\endlastfoot
				
				$  2 $ & $ 1 $ & $ \frac{1}{t^2}(t + 256)^3
				$ & $ t^2 (t + 64)^3 $ \\ \cmidrule{2-4}
				& $ 2 $ & $ \frac{1}{t}(t + 16)^3
				$ & $ 2^{12} t   (t + 64)^3 $ \\ \hline
				$  3 $ & $ 1 $ & $ \frac{1}{t^3}(t + 243)^3 (t + 27)
				$ & $ 3^{-6} t^3 (t + 27)^8 $ \\ \cmidrule{2-4}
				& $ 2 $ & $ \frac{1}{t}(t +   3)^3 (t + 27)
				$ & $ 3^{6}  t (t + 27)^8 $ \\ \hline
				$  5 $ & $ 1 $ & $ \frac{1}{t^5}(t^2 + 250t + 3125)^3
				$ & $ t^5 (t^2 + 22t + 125)^3 $ \\ \cmidrule{2-4}
				& $ 2 $ & $ \frac{1}{t}(t^2 + 10t + 5)^3
				$ & $ 5^{12} t (t^2 + 22t + 125)^3 $ \\ \hline
				$  7 $ & $ 1 $ & $ \frac{1}{t^7}(t^2 + 245t + 2401)^3 (t^2 + 13t + 49)
				$ & $ t^7 (t^2 + 13t + 49)^2 $ \\ \cmidrule{2-4}
				& $ 2 $ & $ \frac{1}{t}(t^2 + 5t + 1)^3 (t^2 + 13t + 49)
				$ & $ 7^{12} t (t^2 + 13t + 49)^2 $ \\ \hline
				$ 13 $ & $ 1 $ & $ \frac{1}{t^{13}}(t^4 + 247t^3 + 3380t^2 + 15379t + 28561)^3 (t^2 + 5t + 13)
				$ & $ t^{13} (t^2 + 6t + 13)^3 (t^2 + 5t + 13)^2 $ \\ \cmidrule{2-4}
				& $ 2 $ & $ \frac{1}{t}(t^4 + 7t^3 + 20t^2 + 19t + 1)^3 (t^2 + 5t + 13)
				$ & $ 13^{12} t (t^2 + 6t + 13)^3 (t^2 + 5t + 13)^2 $ 	\end{longtable}}
		%\aly{TODO: When putting the code together check that the code matches the table.}
		%P43
		As it will be useful later, we note that 
		\begin{equation}\label{eq:Delatap12}
			\frac{\Delta_{p, 1}(t, d)}{\Delta_{p, 2}(t, d)} = t^{p-1} p^{-12}. 
		\end{equation}
		Further, we see that $\Co$ and $\Ct$ are both defined exactly when their discriminants are non-zero. 
		% The values $t$ giving singular curves are given in \eqref{badt}. 
		We note that their discriminants are $0$ precisely at those $t$ values which result in $j_{p,1}(t)=j_{p,2}(t) \in \{0,1728\}$ \cite[Lemma~3.3]{Bariso}. In particular, the families $\mathcal{C}_{p,i}$ fail to parameterize $p$-isogenous elliptic curves whose $j$-invariants are both identically $0$ or $1728$. Consequently, we will use a different approach for these special cases; see Section~\ref{Sec:special-jinv} below.
		
		\begin{remark} \label{SameJ}
			Finally, the set $\{j_{p, 1}(t_0), j_{p, 2}(t_0)\}$ can be same for different values of $t_0$, i.e., there are $t_0 \neq t_0'$ such that $j_{p,1}(t_0) = j_{p, 2}(t_0')$ and $j_{p,1}(t_0') = j_{p, 2}(t_0)$.
			In particular it is quick to check that $j_{p, 1}(\pm 1) = j_{p, 2}(\pm p^{\frac{12}{p-1}})$ and that $j_{p, 1}(\pm p^{\frac{12}{p-1}}) = j_{p, 2}(\pm 1)$ for $p \in  \{2, 3, 5, 7, 13\}.$
		\end{remark}
		%%%%%%%%%%%%%%%%%%%%%%%%
		% New section 3
		%%%%%%%%%%%%%%%%%%%%%%%%
		\section{Kodaira-N\'eron types of elliptic curves with potentially good reduction}
		\label{sec:Curvespotentialgood}
		
		By Lemma~\ref{lemmapotgoodred}, $p$-isogenous discriminant ideal twins must have everywhere potentially good reduction. In this section, we establish a few results about the Kodaira-N\'{e}ron types of elliptic curves having potentially good reduction, which will be used in our main results. To this end, let~$E$ be an elliptic curve over a number field $K$. For a prime
		$\mathfrak{p}$ of $K$, let $K_{\mathfrak{p}}$ denote the completion of $K$ at
		$\mathfrak{p}$ and let $R_\fp$ denote the ring of integers of $K_{\mathfrak{p}}$. Recall that the Kodaira-N\'{e}ron type at $\mathfrak{p}$, denoted $\operatorname{typ}_{\mathfrak{p}}(E)$, is the
		reduction type of the special fiber of the minimal proper regular model of $E$
		over $R_{\mathfrak{p}}$ \cite{Neron1964}. We use Kodaira symbols to
		describe $\operatorname{typ}_{\mathfrak{p}}(E)$. 
		% \alex{Should we move the above to the preliminaries? Same with the portion discussing Ogg's formula below. I think it will flow better if we have it in the preliminary section after our discussion of the conductor. In the process, we can discuss how the conductor is computed from Ogg's formula upon knowing the valuation of the minimal discriminant and the number of irreducible components.}
		% \aly{I agree. It also is a bit weird going back to a case where $\nup(p) \neq 0$ after the previous subsection.}
		
		\begin{lemma}
			\label{PotGoodRedTypes}
			\label{KNforpneq3}Let $E$ be an elliptic curve over a number field $K$.
			If $E$ has potentially good reduction at a prime $\mathfrak{p}\nmid6$ of $K$, then $\operatorname{typ}_{\mathfrak{p}}(E)$ is
			uniquely determined by $\nu_{\mathfrak{p}}(\Delta(E))\ \operatorname{mod}12$ as given in \eqref{kodairasym}.
			% then $\operatorname{typ}_{\mathfrak{p}}(E)$ is one of the Kodaira
			% symbols appearing in (\ref{kodairasym}). Suppose further that $\mathfrak{p}\nmid6$ and let $\Delta$ denote the
			% discriminant of $E$. Then $\operatorname{typ}_{\mathfrak{p}}(E)$ is
			% uniquely determined by $\nu_{\mathfrak{p}}(\Delta)\ \operatorname{mod}12$:
			\begin{equation}\label{kodairasym}
				\scalebox{1}{
					\renewcommand{\arraystretch}{1.18}
					\renewcommand{\arraycolsep}{.3cm}
					$\begin{array}{ccccccccc} \hline
						\operatorname{typ}_{\mathfrak{p}}(E) &  {\rm I}_0 & {\rm II} &{\rm III}& {\rm IV} & {\rm I}_0^* & {\rm IV}^* & {\rm III}^* & {\rm II}^* \\\hline
						\nu_{\mathfrak{p}}(\Delta(E))\ \operatorname{mod}12  &  0   &  2 &   3 &  4 &     6 &    8 &     9 &  10 \\ \hline
					\end{array}$}
			\end{equation}
			Further, this uniquely determines the valuation of the minimal discriminant.
		\end{lemma}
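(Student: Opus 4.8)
The plan is to pin down the list of possible Kodaira--N\'eron types at $\fp$ and then read off $\nup(\minD) \bmod 12$ from Ogg's formula \eqref{Ogg}. First I would note that potentially good reduction forces $E$ to have good or additive reduction at $\fp$: by Proposition~\ref{prop:integralj} the $j$-invariant is integral, so $E$ cannot have multiplicative reduction. If $E$ has good reduction at $\fp$, then $\nup(\minD)=0$ and $\operatorname{typ}_\fp(E)=\mathrm{I}_0$, matching the first entry of \eqref{kodairasym}. If $E$ has additive reduction, then $\fp$ lies above a rational prime $\ell\ge 5$, so in particular $\ell\neq 2$ and Proposition~\ref{Lorenzini} applies: since $E$ has potentially good reduction, $\operatorname{typ}_\fp(E)\in\{\mathrm{II},\mathrm{III},\mathrm{IV},\mathrm{I}_0^*,\mathrm{IV}^*,\mathrm{III}^*,\mathrm{II}^*\}$. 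Throughout I would use that, for any Weierstrass model of $E$, one has $\nup(\Delta(E))\equiv \nup(\minD)\pmod{12}$, since two models of $E$ are related by an admissible change of variables and hence their discriminants differ by a twelfth power of a nonzero element of $K_\fp$; so it suffices to determine $\nup(\minD)\bmod 12$ in terms of the type.

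Next I would compute $\nup(\minD)$ in each of the seven additive cases. Since $\fp\nmid 6$, the additive reduction is tame, i.e.\ $\delta_\fp=0$, so the conductor exponent is $f_\fp=2+\delta_\fp=2$. Ogg's formula \eqref{Ogg} then yields $\nup(\minD)=f_\fp+m_\fp-1=m_\fp+1$, where $m_\fp$ is the number of irreducible components of the special fiber. Inserting the standard component counts $m_\fp=1,2,3,5,7,8,9$ for $\operatorname{typ}_\fp(E)=\mathrm{II},\mathrm{III},\mathrm{IV},\mathrm{I}_0^*,\mathrm{IV}^*,\mathrm{III}^*,\mathrm{II}^*$ respectively (read off from Tate's algorithm \cite{Tate1975}, see also \cite[Chapter~IV]{Silverman1994}) gives $\nup(\minD)=2,3,4,6,8,9,10$, and together with the value $0$ for $\mathrm{I}_0$ these are exactly the entries of \eqref{kodairasym}. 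This proves one direction: the Kodaira--N\'eron type at $\fp$ determines $\nup(\Delta(E))\bmod 12$.

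For the converse, I would observe that the eight values $0,2,3,4,6,8,9,10$ are pairwise distinct and all lie in $\{0,1,\dots,11\}$. Hence $\nup(\Delta(E))\bmod 12$ is one of them, and by the table just established it recovers $\operatorname{typ}_\fp(E)$ uniquely. Moreover, since $0\le \nup(\minD)\le 10<12$ in every case, this residue class is literally equal to $\nup(\minD)$, which yields the final assertion of the lemma. The only real obstacle is bookkeeping: one must invoke precisely the right component counts $m_\fp$ and the fact that $\fp\nmid 6$ makes the additive reduction tame with $f_\fp=2$. It is worth spelling out that the hypothesis $\fp\nmid 6$ is used in exactly two places — to apply Proposition~\ref{Lorenzini} (ruling out $\mathrm{I}_n^*$, which can occur at primes above $2$) and to guarantee $\delta_\fp=0$ in Ogg's formula.
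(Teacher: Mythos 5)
Your proof is correct and reaches the same table, but it derives the numerical entries in a genuinely different way from the paper. The paper, after invoking Proposition~\ref{Lorenzini} exactly as you do to restrict $\operatorname{typ}_\fp(E)$ to $\{\mathrm{I}_0,\mathrm{II},\mathrm{III},\mathrm{IV},\mathrm{I}_0^*,\mathrm{IV}^*,\mathrm{III}^*,\mathrm{II}^*\}$, simply cites \cite[Tableau~I]{Papadopoulos1993} to assert that $\nup(\minD)$ is determined by the type and takes the eight values $0,2,3,4,6,8,9,10$. You instead reconstruct those values from first principles: since $\fp\nmid 6$ the reduction is tame, so $f_\fp=2$ in the additive cases, and Ogg's formula \eqref{Ogg} gives $\nup(\minD)=f_\fp+m_\fp-1=m_\fp+1$, which combined with the standard component counts $m_\fp=1,2,3,5,7,8,9$ reproduces the table. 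This buys you a more self-contained argument that does not require the external Papadopoulos reference, at the cost of pulling in the component counts and the tame-conductor fact from Tate's algorithm explicitly. The final step in both proofs is the same: the eight residues are distinct and lie in $[0,12)$, and $\nup(\Delta(E))\equiv\nup(\minD)\pmod{12}$ for any model, so the residue class both recovers the type and, being below $12$, equals $\nup(\minD)$ on the nose. One small point worth flagging: Ogg's formula is available here because $E$ has additive reduction with $\fp\nmid 6$, but Ogg's original proof of the formula at residue characteristic $2$ and $3$ is incomplete (it was later completed by Saito); since you only apply it away from $6$, this is not an issue, but it is good to be aware of why the restriction matters beyond tameness.
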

		
		\begin{proof}
			By Proposition~\ref{Lorenzini}, $E$ has potentially good reduction at a prime $\fp\nmid 6$ if and only if $\operatorname{typ}_{\mathfrak{p}}(E)$ is one of the Kodaira symbols listed in \eqref{kodairasym}.
			So let $\mathfrak{D}^{\text{min}}$ denote
			the minimal discriminant ideal of $E$. From \cite[Tableau~I]{Papadopoulos1993}%
			, we have that $\operatorname{typ}_{\mathfrak{p}}(E)$ is uniquely
			determined by $\nup(\minD)$. In particular,
			we have that:
			\[
			\scalebox{1}{
				\renewcommand{\arraystretch}{1.18}
				\renewcommand{\arraycolsep}{.3cm}
				$\begin{array}{ccccccccc} \hline
					\operatorname{typ}_{\mathfrak{p}}(E) &  {\rm I}_0 & {\rm II} &{\rm III}& {\rm IV} & {\rm I}_0^* & {\rm IV}^* & {\rm III}^* & {\rm II}^* \\\hline
					\nup(\minD)  &  0   &  2 &   3 &  4 &     6 &    8 &     9 &  10 \\ \hline
				\end{array}$}
			\]
			The result now follows since $\nu_{\mathfrak{p}}(\Delta(E))-\nup(\minD)\in12\mathbb{Z}
			$ and by \cite[Table 4.1]{Silverman1994} we have $0 \leq \nup(\mathfrak{D}^{min}) < 12$.
		\end{proof}
		% \begin{remark}
			% When $\fp \mid 2$, the Kodaira symbol $\rm{I}_n^*$ can also appear with potentially good additive reduction. See Example~\ref{lemmafail}.
			% \end{remark}
		% \aly{we also mention this remark in section 2.}\manami{It is mentioned in the paragraph, so we should remove this remark!}
		
		Now suppose that $E_{1}$ and $E_{2}$ are elliptic curves defined over $K$ with
		potentially good reduction at a prime $\mathfrak{p}$. 
		Note that we are not assuming that they are isogenous or even have the same conductor.
		Suppose further that the
		$\mathfrak{p}$-adic valuation of their discriminants are equal modulo $12$ and
		let $\mathfrak{N}_{i}$ denote the conductor ideal of $E_{i}$. If
		$\mathfrak{p}\nmid6$, then Lemma \ref{PotGoodRedTypes} asserts that $E_{1}$
		and $E_{2}$ have the same minimal discriminant valuation at $\mathfrak{p}$ and
		$\operatorname*{typ}_{\mathfrak{p}}(E_{1})=\operatorname*{typ}_{\mathfrak{p}%
		}(E_{2})$. Moreover, \cite[Tableau~I]{Papadopoulos1993} also tells us that
		$\nu_{\mathfrak{p}}(\mathfrak{N}_{1})=\nu_{\mathfrak{p}}(\mathfrak{N}_{2})$. 
		
		A result similar to Lemma \ref{PotGoodRedTypes} holds for $\mathfrak{p}|3$ if
		we first require that $\nu_{\mathfrak{p}}(\mathfrak{N}_{1})=\nu_{\mathfrak{p}%
		}(\mathfrak{N}_{2})$. For reference, see the following example.
		\begin{example}
			% 	\aly{TODO: Give examples with the following
				% 	\begin{itemize}
					% 	    \item valuation not in the above table
					% 	    \item with different conductors and valuations congruent mod 12 but not equal.
					% 	\end{itemize}
				% 	}
			Let $E_{1}$ and $E_{2}$ be
			the elliptic curves given by the LMFDB label \href{http://www.lmfdb.org/EllipticCurve/Q/36/a/1}{36.a1} and
			\href{http://www.lmfdb.org/EllipticCurve/Q/108/a/1}{108.a1}. Then $E_{1}$ and $E_{2}$ have potentially good reduction at $3$. Moreover $\nu_{3}(\mathfrak{D}_{1}^{\text{min}%
			})=\mathfrak{\nu}_{3}(\mathfrak{D}_{2}^{\text{min}})=9$, but,
			$\operatorname*{typ}_{3}(E_{1})={\rm III}^*$ and $\operatorname*{typ}%
			_{3}(E_{2})={\rm IV}^{\ast}$.
		\end{example}
		Since we are ultimately interested in isogenous elliptic
		curves, and isogenous elliptic curves have the same conductor, this restriction is sufficient.
		Recall that the conductor exponent $f_{\fp}$ is computed using the following formula due to Ogg \cite{Ogg}  in terms of the valuation of the minimal discriminant ideal $\minD$ and $m_{\fp}$:
		\begin{equation}
			\label{Ogg}
			f_{\fp}=\nup(\mathfrak{N})=\nup(\minD)-m_{\fp}+1.
		\end{equation}
		
		%  To state this result, we first recall Ogg's formula~\cite{Ogg}: if $E/K$ is an elliptic curve with conductor ideal $\mathfrak{N}$
		% and minimal discriminant ideal $\mathfrak{D}^{\text{min}}$, then
		% $\nu_{\mathfrak{p}}(\mathfrak{N})=\nu_{\mathfrak{p}}(\mathfrak{D}^{\text{min}%
			% })-m_{\mathfrak{p}}+1$ where $m_{\mathfrak{p}}$ is the number of irreducible
		% components (without counting multiplicities), defined over $\overline
		% {\mathbb{F}}_{\mathfrak{p}}$, of the special fiber of the regular proper
		% minimal model of $E$ over $\mathcal{O}_{K_{\mathfrak{p}}}$.

		\begin{lemma}
			\label{KNforpeq3}Let $K$ be a number field and let $\mathfrak{p}$ be a prime
			such that $\mathfrak{p}|3$. Suppose further that $E_{1}$ and $E_{2}$ are
			elliptic curves over $K$ with potentially good reduction at $\mathfrak{p}$
			such that $\nu_{\mathfrak{p}}(\mathfrak{N}_{1})=\nu_{\mathfrak{p}%
			}(\mathfrak{N}_{2})$. If $\nu_{\mathfrak{p}}(\Delta(E_{1}))\equiv
			\nu_{\mathfrak{p}}(\Delta(E_{2}))\ \operatorname{mod}12$, then the
			$\mathfrak{p}$-adic valuation of the minimal discriminant of $E_{1}$ and
			$E_{2}$ at $\mathfrak{p}$ are equal and $\operatorname*{typ}_{\mathfrak{p}%
			}(E_{1})=\operatorname*{typ}_{\mathfrak{p}}(E_{2})$.
		\end{lemma}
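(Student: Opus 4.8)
The plan is to reduce Lemma~\ref{KNforpeq3} to a finite table lookup, exactly as in the proof of Lemma~\ref{PotGoodRedTypes}, but now using the conductor exponent to break the ambiguity that occurs when $\mathfrak{p}\mid 3$. First I would invoke Proposition~\ref{Lorenzini}: since $E_1$ and $E_2$ have potentially good additive (or good) reduction at $\mathfrak{p}\mid 3$, their Kodaira--N\'eron types lie among $\{{\rm I}_0, {\rm II}, {\rm III}, {\rm IV}, {\rm I}_0^*, {\rm IV}^*, {\rm III}^*, {\rm II}^*\}$. Then I would quote \cite[Tableau~III]{Papadopoulos1993} (the $\mathfrak{p}\mid 3$ analogue of Tableau~I used before), which records, for each such Kodaira type, both the value of $\nu_{\mathfrak{p}}(\minD)\bmod 12$ and the conductor exponent $f_{\mathfrak{p}}$. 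The key point is that the pair $\bigl(\nu_{\mathfrak{p}}(\minD)\bmod 12,\; f_{\mathfrak{p}}\bigr)$ uniquely determines $\operatorname{typ}_{\mathfrak{p}}(E)$ among the potentially-good types: the only collisions in $\nu_{\mathfrak{p}}(\minD)\bmod 12$ alone are between ${\rm III}^*$ and ${\rm IV}^*$ (both giving $9$) — as illustrated by the example of \href{http://www.lmfdb.org/EllipticCurve/Q/36/a/1}{36.a1} and \href{http://www.lmfdb.org/EllipticCurve/Q/108/a/1}{108.a1} — and these are separated by $f_{\mathfrak{p}}$.

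Concretely, I would argue as follows. Since $\nu_{\mathfrak{p}}(\Delta(E_i)) - \nu_{\mathfrak{p}}(\minD_i) \in 12\mathbb{Z}$ and $0 \le \nu_{\mathfrak{p}}(\minD_i) < 12$ (as in Lemma~\ref{PotGoodRedTypes}, via \cite[Table~4.1]{Silverman1994}), the hypothesis $\nu_{\mathfrak{p}}(\Delta(E_1)) \equiv \nu_{\mathfrak{p}}(\Delta(E_2)) \bmod 12$ forces $\nu_{\mathfrak{p}}(\minD_1) = \nu_{\mathfrak{p}}(\minD_2)$; call this common value $\delta$. Now $(\delta, f_{\mathfrak{p}})$ is the same for both curves by hypothesis on the conductor exponents, and Tableau~III shows this pair pins down a unique Kodaira symbol among the eight potentially-good types, so $\operatorname{typ}_{\mathfrak{p}}(E_1) = \operatorname{typ}_{\mathfrak{p}}(E_2)$. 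This simultaneously gives both conclusions of the lemma.

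The main obstacle — really the only nontrivial point — is verifying that the map $\operatorname{typ}_{\mathfrak{p}}(E) \mapsto (\nu_{\mathfrak{p}}(\minD)\bmod 12,\, f_{\mathfrak{p}})$ is injective on the set of potentially-good Kodaira types over a $3$-adic field; this is where one must actually consult \cite{Papadopoulos1993} rather than wave hands, since over a $3$-adic field the conductor exponent of an additive-reduction curve can exceed $2$ (wild ramification), and one must check that the wild part $\delta_{\mathfrak{p}}$ never conspires to make two distinct types share both invariants. The five non-starred types ${\rm I}_0, {\rm II}, {\rm III}, {\rm IV}$ and ${\rm I}_0^*$ already have distinct $\nu_{\mathfrak{p}}(\minD)\bmod 12$ values $0,2,3,4,6$, so no conductor information is needed there; among the remaining $\{{\rm IV}^*, {\rm III}^*, {\rm II}^*\}$ with discriminant valuations $8,9,10$, only ${\rm IV}^*$ and ${\rm III}^*$ coincide modulo... they don't coincide either ($8$ versus $9$), so in fact for potentially good reduction the discriminant valuation mod $12$ already determines the type even when $\mathfrak{p}\mid 3$. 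Wait — this means the conductor hypothesis in the lemma statement is there to rule out the spurious case noted in the paragraph before the lemma, where two curves with $\nu_3(\minD)=9$ can have types ${\rm III}^*$ and ${\rm IV}^*$; but those have $\nu_3(\Delta)\bmod 12$ equal to $9$ versus $8+12k$... Let me reconcile: the example shows $\nu_3(\minD_1)=\nu_3(\minD_2)=9$ yet different types, so $\nu_3(\minD)$ alone does \emph{not} determine the type over $3$-adic fields, contrary to the non-starred analysis — the resolution is that over ramified $3$-adic bases the correspondence in \cite[Table~4.1]{Silverman1994} giving $0\le\nu_{\mathfrak p}(\minD)<12$ must be replaced by the $3$-adic bound ($\nu_{\mathfrak p}(\minD)$ can be larger), so the reduction ``$\nu_{\mathfrak p}(\minD_1)=\nu_{\mathfrak p}(\minD_2)$'' is \emph{not} automatic and one genuinely needs Tableau~III together with the conductor constraint to force equality of both the minimal discriminant valuation and the type. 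That is the step I would write out carefully by citing the relevant lines of \cite{Papadopoulos1993}.
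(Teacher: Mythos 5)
Your proposal contains a genuine gap that you yourself diagnose in the final paragraph but do not repair. The step ``$0 \le \nu_{\mathfrak{p}}(\minD_i) < 12$ forces $\nu_{\mathfrak{p}}(\minD_1) = \nu_{\mathfrak{p}}(\minD_2)$'' is simply false at primes $\mathfrak{p}\mid 3$: wild ramification can push the minimal discriminant valuation above $12$ (over a ramified extension of $\mathbb{Q}_3$, the conductor exponent of an additive potentially-good curve can exceed $5$, and correspondingly $\nu_{\mathfrak{p}}(\minD)$ can exceed $12$). Indeed this is exactly why Lemma~\ref{PotGoodRedTypes} carries the hypothesis $\mathfrak{p}\nmid 6$. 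Once that bound is withdrawn you are left with nothing: you have not established equality of the minimal discriminant valuations, and the promise to ``write out carefully'' the right argument by inspecting the relevant tableau of \cite{Papadopoulos1993} is not a proof. (Incidentally, the tableau for residue characteristic $3$ in \cite{Papadopoulos1993} is Tableau~II, not Tableau~III; the paper cites Tableau~II here and Tableau~V for residue characteristic $2$.)

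The repair you are missing is to move the bounding argument off $\nu_{\mathfrak{p}}(\minD)$ and onto the component count $m_{\mathfrak{p}}$ via Ogg's formula $f_{\mathfrak{p}} = \nu_{\mathfrak{p}}(\minD) - m_{\mathfrak{p}} + 1$. For the eight potentially-good Kodaira types one always has $1 \le m_{\mathfrak{p}} \le 9$ regardless of ramification, so the hypotheses $f_{\mathfrak{p},1}=f_{\mathfrak{p},2}$ and $\nu_{\mathfrak{p}}(\Delta(E_1))\equiv\nu_{\mathfrak{p}}(\Delta(E_2))\bmod 12$ give $m_{\mathfrak{p},1}\equiv m_{\mathfrak{p},2}\bmod 12$, hence $m_{\mathfrak{p},1}=m_{\mathfrak{p},2}$, and then Ogg's formula returns $\nu_{\mathfrak{p}}(\minD_1)=\nu_{\mathfrak{p}}(\minD_2)$ as a consequence, not an assumption. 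One then observes that $m_{\mathfrak{p}}$ determines the Kodaira type outright except in one ambiguous value of $m_{\mathfrak{p}}$, which is resolved by the conductor exponent via \cite[Tableau~II]{Papadopoulos1993}. Your instinct to separate collisions using $f_{\mathfrak{p}}$ is sound, but the logical scaffolding needed to reach the point where that observation applies is missing from your write-up.
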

		\begin{proof}
			From Proposition~\ref{Lorenzini}, $\operatorname{typ}_{\fp}(E_i) \in \{{\rm I_0, II, III, IV, I_0^*, IV^*, III^*, II^*}\}$. Now let $\mathfrak{D}_{i}^{\text{min}}$ denote the minimal discriminant ideal
			of $E_{i}$. In particular, $\nu_{\mathfrak{p}}(\mathfrak{D}_{i}^{\text{min}%
			})\equiv\nu_{\mathfrak{p}}(\Delta(E_{i}))\ \operatorname{mod}12$. By Ogg's
			formula (see \eqref{Ogg}), we have that $\nu_{\mathfrak{p}}(\mathfrak{N}_{\mathfrak{i}}%
			)=\nu_{\mathfrak{p}}(\mathfrak{D}_{i}^{\text{min}})-m_{\mathfrak{p},i}+1$.
			Since $\nu_{\mathfrak{p}}(\mathfrak{N}_{1})=\nu_{\mathfrak{p}}(\mathfrak{N}%
			_{2})$ and $\nu_{\mathfrak{p}}(\Delta(E_{1}))\equiv\nu_{\mathfrak{p}}%
			(\Delta(E_{2}))\ \operatorname{mod}12$, we have by Ogg's formula that%
			\begin{equation}\label{applyogg}
				\nu_{\mathfrak{p}}(\mathfrak{D}_{1}^{\text{min}})-m_{\mathfrak{p},1}%
				=\nu_{\mathfrak{p}}(\mathfrak{D}_{2}^{\text{min}})-m_{\mathfrak{p},2}%
				\qquad\Longrightarrow\qquad m_{\mathfrak{p},1}\equiv m_{\mathfrak{p}%
					,2}\ \operatorname{mod}12.
			\end{equation}
			It follows that $m_{\mathfrak{p},1}=m_{\mathfrak{p},2}$ since $1\leq
			m_{\mathfrak{p},i}\leq9$ \cite[Table 4.1]{Silverman1994}. From
			(\ref{applyogg}), we obtain that $\nu_{\mathfrak{p}}(\mathfrak{D}_{1}^{\text{min}}%
			)=\nu_{\mathfrak{p}}(\mathfrak{D}_{2}^{\text{min}})$. 
			
			From loc. cit., we also have that $m_{\mathfrak{p},i}$ uniquely determines
			$\operatorname*{typ}_{\mathfrak{p}}(E_{i})$ if $m_{\mathfrak{p},i}\neq1$. In
			particular, $\operatorname*{typ}_{\mathfrak{p}}(E_{1})=\operatorname*{typ}%
			_{\mathfrak{p}}(E_{2})$ if $m_{\mathfrak{p},i}\neq1$. So it remains to
			consider the case when $m_{\mathfrak{p},1}=m_{\mathfrak{p},2}=1$. In this case
			$\operatorname*{typ}_{\mathfrak{p}}(E_{i})\in\left\{{\rm  I_{0}^*,II}\right\}  $.
			From \cite[Tableau~II]{Papadopoulos1993}, we have that%
			\[
			\operatorname*{typ}\nolimits_{\mathfrak{p}}(E_{i})=\left\{
			\begin{array}
				[c]{cl}%
				{\rm I_{0}^*} & \text{if }\nu_{\mathfrak{p}}(\mathfrak{N}_{\mathfrak{i}})=2,\\
				{\rm II} & \text{if }\nu_{\mathfrak{p}}(\mathfrak{N}_{\mathfrak{i}})=3,4,5.
			\end{array}
			\right.
			\]
			Since $\nu_{\mathfrak{p}}(\mathfrak{N}_{1})=\nu_{\mathfrak{p}}(\mathfrak{N}%
			_{2})$, we conclude that $\operatorname*{typ}_{\mathfrak{p}}(E_{1}%
			)=\operatorname*{typ}_{\mathfrak{p}}(E_{2})$.
			%\aly{I thought $I_0$ occured iff $\nup(\mathfrak{N}) = 0.$} \alex{typo, supposed to be $I_0^*$}
		\end{proof}
		
		\begin{remark}
			Lemma \ref{KNforpeq3} does not hold for primes $\mathfrak{p}$ dividing $2$.
			Suppose further that $\nu_{\mathfrak{p}}(\mathfrak{N}_{1})=\nu_{\mathfrak{p}%
			}(\mathfrak{N}_{2})$ and $\nu_{\mathfrak{p}}(\mathfrak{D}_{1}^{\text{min}%
			})=\nu_{\mathfrak{p}}(\mathfrak{D}_{2}^{\text{min}})$. From \cite[Tableau
			V]{Papadopoulos1993} we see that the $\mathfrak{p}$-adic valuation of the
			minimal discriminant and conductor exponent does not uniquely determine the
			Kodaira-N\'{e}ron type at $\mathfrak{p}$ since the Kodaira-N\'{e}ron type $\rm{I}_{n}%
			^{\ast}$ for $n\geq1$ can now occur. This phenomenon is depicted in the
			following example:
		\end{remark}
		
		\begin{example}
			\label{lemmafail}Let $K=\mathbb{Q}(i)$ and let $\mathfrak{p=(}1+i)\mathcal{O}_{K}$ be the prime above $2$, in particular $\fp^2 = 2\mathcal{O}_K$. Let $E_{1}$ and $E_{2}$ be
			the elliptic curves given by the LMFDB label \href{https://www.lmfdb.org/EllipticCurve/2.0.4.1/512.1/a/1}{2.0.4.1-512.1-a1} and
			\href{https://www.lmfdb.org/EllipticCurve/2.0.4.1/512.1/a/2}{2.0.4.1-512.1-a2}. Then $E_{1}$ and $E_{2}$ are isogenous everywhere potentially good elliptic curves. Moreover $\nu_{\mathfrak{p}} (\mathfrak{D}_{1}^{\text{min}})=\mathfrak{\nu}_{\mathfrak{p}}(\mathfrak{D}_{2}^{\text{min}})=17$, but $\operatorname*{typ}_{\mathfrak{p}}(E_{1})=\rm{II}^{\ast}$ and $\operatorname*{typ}_{\mathfrak{p}}(E_{2})=\rm{I}_{4}^{\ast}$. This shows that the assumptions that the $\mathfrak{p}$-adic valuations of the conductor and minimal discriminant ideal are not enough to guarantee the same Kodaira-N\'{e}ron type at $\mathfrak{p}$.
		\end{example}

		\section{Discriminant ideal twins for identical \texorpdfstring{$j$}{j}-invariants}
		\label{Sec:special-jinv}
		% TODO: Note: we've decided to use $\zeta_3$ instead of $\rho$.
		%P44
		
		In this section, we categorize all $p$-isogenous discriminant (ideal) twins where the curves also share the same $j$-invariant for $p \in \{2, 3, 5, 7, 13\}$. To this end, we must consider two cases as the parameterized families of elliptic curves $\mathcal{C}_{p, i}(t,d)$ fail to capture the case when two $p$-isogenous elliptic curves share the same $j$-invariant $0$ or $1728$. For this reason, we treat the case corresponding to $j$-invariants $0$ or $1728$ separately. 
		
		The following lemma tells us that $p$-isogenous discriminant ideal twins sharing the same $j$-invariant will necessarily have complex multiplication (CM).  Further, if we base change to a field containing their CM endomorphisms their isogeny class collapses, the curves are now isomorphic and no longer discriminant ideal twins.
		
		\begin{lemma}
			\label{SameJCM}
			Let $K$ be a field and suppose that $E_1$ and $E_2$ are non-$K$-isomorphic isogenous elliptic curves. If $E_1$ and $E_2$ have the same $j$-invariant, 
			then they have complex multiplication. 
			Further, such curves are isomorphic exactly over the fields containing their complex multiplication field.
		\end{lemma}
		
		\begin{proof}
			To create a contradiction, assume $E_1$ and $E_2$ are non-$K$-isomorphic isogenous elliptic curves over $K$ without CM but with the same $j$-invariant.
			Let $\varphi:E_1 \rightarrow E_2$ be the isogeny defined over $K$.
			As $E_1$ and $E_2$ have the same $j$-invariant, then $E_1$ and $E_2$ are isomorphic over an algebraic closure $\overline{K}$.  Denote by $\tau:E_2 \rightarrow E_1$ the isomorphism defined over $\overline{K}$.
			Then $\tau\circ \varphi: E_1 \rightarrow E_1$ is an endomorphism of $E_1.$
			As the curves do not have CM, then $\tau \circ \varphi = [n ]$ is multiplication-by-$n$ for some $n \in \mathbb{Z}.$
			Then for any $\sigma \in \text{Gal}(\overline{K}/K)$, $\sigma$ commutes with $\varphi$ and $[n]$ and thus $\sigma$ also commutes with $\tau$.  Consequently, $\tau$ was actually defined over $K$, and hence $E_1$ and $E_2$ are isomorphic over $K$.
			
			Now assume $E_1$ and $E_2$ are non-$K$-isomorphic isogenous curves over $K$ with CM by the order $\mathcal{O}$, and that they have the same $j$-invariant.  Using the notation as above, $\tau \circ \varphi$ is an endomorphism of $E_1$ and thus can be viewed as an element of $\mathcal{O},$ but it is only defined as an endomorphism over an extension of $K$ containing $\mathcal{O}$.  Thus by the same argument as above $E_1$ is isomorphic to $E_2$ over some field $L$ if and only if $\mathcal{O} \subset L.$
		\end{proof}
		% \manami{Do we need the proof? Isn't it a well-known results to the experts? (FIND REFERENCE!)}
		
		As we are studying isogenous elliptic curves with the same $j$-invariants, and thus with complex multiplication, we will only find discriminant (ideal) twins in number fields that do not contain the CM order.
		
		\subsection{Singular $j$-invariants}
		We study curves with both $j$-invariants $0$ or $1728$ first.
		
		Here, we will find that for $j = 0$, we can only have $3$-isogenous discriminant ideal twins, and for $j = 1728$, we can only have $2$-isogenous discriminant ideal twins, both over number fields not containing the complex multiplication field of the elliptic curves (see Example~\ref{Exampledisctwinsspecialj}). We also find that discriminant twins only occur for $3$-isogenies under the further assumption that the number field contains $\sqrt{3}$. While $2$-isogenous discriminant ideal twins occur, $2$-isogenous discriminant twins do not exist. Thus, we start by narrowing down the isogeny degrees we need to examine.
		
		Before diving in, we recall some facts about modular polynomials and $j$-invariants; see \cite[Chapter II]{Silverman1994}. Let $p$ be a rational prime and $\Phi_{p}(x, y)$ be the classical modular polynomial for the modular curve $X_0(p)$.  Solutions $(X, Y)$ to $\Phi_p(x, y) = 0$ correspond to $p$-isogenous elliptic curves $E_1$ and $E_2$ such that $j(E_1)=X$ and $j(E_2)=Y$. In particular, let $E$ be an elliptic curve defined over a number field $K$, then $\Phi_p(j(E), y)$ is a degree $p+1$ polynomial in $y$, the roots of which are exactly the $j$-invariants of the $p+1$ elliptic curves over $\overline{K}$ that are $p$-isogenous  to $E$. Thus, to find all $p$ such that there exist $p$-isogenies where both $j$-invariants are $0$ or $1728$, we factor the modular polynomials $\Phi_p(0, y)$ and $\Phi_p(1728, y)$, respectively.
		
		\begin{lemma}\label{numberpiso}
			Let $E$ be an elliptic curve defined over a number field $K$ with $j=j(E)\in\{0,1728\}$. If $p\in\{2,3,5,7,13\}$, then the number of $p$-isogenies $E\rightarrow E$ defined over~$\overline{K}$ are as given in~\eqref{tab:ModPoly}.
			\begin{equation}
				\renewcommand{\arraystretch}{1.3}
				\renewcommand{\arraycolsep}{0.2cm}
				\begin{array}
					[c]{cccccc} \hline
					p & 2 & 3 & 5 & 7 & 13\\\hline
					j=0 & 0 & 1 & 0 & 2 & 2\\
					j=1728 & 1 & 0 & 2 & 0 & 2\\\hline
				\end{array}
				\label{tab:ModPoly}%
			\end{equation}
		\end{lemma}
		
		\begin{proof}
			In \cite{Bruinier_ClassPolynomials,SutherlandVolcanoes}, the classical modular polynomials $\Phi_{p}(x,y)$ for $p\in\{2,3,5,7,13\}$ are given explicitly. For each $p$ and $j\in\{  0,1728\}  $, we factor the modular polynomial $\Phi_{p}(j,y)$ to find when it has a factor of $y$ or $y-1728$ if $j=0$ or $j=1728$, respectively. 
			% TODO: cite jupyter notebook
			In this direction, let $\nu_{f}$ denote the $f$-adic valuation of $\mathbb{Q}(y)$ for some irreducible element $f\in\mathbb{Q}[y]$. In \cite{GitHubDIT}, we show that $\nu_{y-j}(\Phi_{p}(j,y))$ is the quantity given in~\eqref{tab:ModPoly}. We now claim that $\nu_{y-j}(\Phi_{p}(j,y))$ gives the number of $p$-isogenies $\psi:E\rightarrow E$ defined over $\overline{K}$, such that $j=j(E)$. This is automatic if $\nu_{y-j}(\Phi_{p}(j,y))=0$, and so it remains to consider those cases in~\eqref{tab:ModPoly} where $\nu_{y-j}(\Phi_{p}(j,y))>0$. In these cases, Tsukazaki shows that there are $\nu_{y-j}(\Phi_{p}(j,y))$ $p$-isogenies \cite[Section 4.3]{Tsukazaki}.
			% In this direction, let $\nu_{f}$ denote the $f$-adic valuation of $\mathbb{Q}(y)$ for some irreducible element $f \in \mathbb{Q}[  y]  $. Then $\nu_{y-j}(\Phi_{p}(j,y))$ gives the number of $p$-isogenies $\psi:E\rightarrow E$ defined over $\overline{K}$, such that $j=j(E)$. In particular, if $\nu_{y-j}(\Phi_{p}(j,y))=0$, then there are no such $p$-isogenies. If $\nu_{y-j}(\Phi_{p}(j,y))>0$, then there are $\nu_{y-j}(\Phi_{p}(j,y))$ such $p$-isogenies. In \cite{GitHubDIT}, we show that $\nu_{y-j}(\Phi_{p}(j,y))$ is the quantity given in~\eqref{tab:ModPoly}.
		\end{proof}
		
		Factoring the modular polynomials as in the proof of Lemma~\ref{numberpiso} can lead to over-counting the number of non-trivial isogenies. The $j$-invariants $0$ and $1728$ are CM $j$-invariants. 
		Elliptic curves with $j$-invariant $0$ have CM by $\mathbb{Z}[  \zeta_{3}]  $, where $\zeta_{3}=\frac{-1+\sqrt{-3}}{2}$. Similarly, elliptic curves with $j$-invariant $1728$ have CM by $\mathbb{Z}[  i]  $, where $i=\sqrt{-1}$. When the field of definition of an elliptic curve with CM contains the CM field, some isogenies are actually endomorphisms, giving an isomorphic elliptic curve instead of another distinct curve. The next example illustrates this phenomenon.
		
		\begin{example}
			Consider the elliptic curves $E_{1}:y^{2}=x^{3}-108$ and $E_{2}:y^{2}=x^{3}+4$, whose LMFDB labels are \href{https://www.lmfdb.org/EllipticCurve/Q/108.a1/}{108.a1} and \href{https://www.lmfdb.org/EllipticCurve/Q/108.a2/}{108.a2}, respectively. Over $\mathbb{Q}$, these elliptic curves are $3$-isogenous, and the isogeny between them is not a $\mathbb{Q}$-isomorphism. Both elliptic curves have $j$-invariant~$0$ and thus have CM by $\mathbb{Z}[  \zeta_{3}]  $. Now let $K=\mathbb{Q}(\zeta_{3})$ be the CM field, and consider the base change of $E_{1}$ and $E_{2}$ to $K$. Now, the two elliptic curves are $K$-isomorphic as there is an isomorphism $\tau = [\zeta_3+2,0,0,0] $ from $E_1$ to $E_2$. The LMFDB label of this elliptic curve over $K$ is \href{https://www.lmfdb.org/EllipticCurve/2.0.3.1/1296.1/CMa/1}{1296.1-CMa1}. We note that the $K$-isogeny class of $E_{1}$ is a singleton set consisting of the $K$-isomorphism class of $E_{1}$.
		\end{example}
		\begin{proposition}\label{Proponjs}
			Let $E_{j}$ be an elliptic curve defined over a number field $K$ with $j$-invariant $j\in\left\{  0,1728\right\}  $. For $p\in\left\{5,7,13\right\}  $, if $E_{j}$ admits a $p$-isogeny defined over $K$ to an elliptic curve with the same $j$-invariant, then the elliptic curves are $K$-isomorphic.
		\end{proposition}
		
		\begin{proof}
			By Lemma \ref{numberpiso}, the number of $p$-isogenies defined over $\overline{K}$ admitted by $E_j$ are as given in~\eqref{tab:ModPoly}. Tsukazaki computed the kernel polynomial of these $p$-isogenies, and they are explicitly given in \cite[Tables 4.2 and 4.3]{Tsukazaki}. In particular, the field of definition of the kernel polynomials are as given in \eqref{tab:IsogField}.
			\begin{equation}
				\renewcommand{\arraystretch}{1.3}
				\renewcommand{\arraycolsep}{0.2cm}
				\begin{array}
					[c]{cccc}\hline
					p  & 5 & 7 & 13\\\hline
					j=0 & - &\mathbb{Q}(\zeta_{3}) &\mathbb{Q}(\zeta_{3})\\\hline
					j=1728 &\mathbb{Q}(i) & - &\mathbb{Q}(i)\\\hline
				\end{array}
				\label{tab:IsogField}%
			\end{equation}
			Consequently, if $E_j$ admits a $p$-isogeny defined over $K$, then $\zeta_3 \in K$ (resp. $i \in K$) for $j=0$ (resp. $1728$).
			
			Next, let $\mathcal{O}_{j}$ denote the endomorphism ring of $E_{j}$, i.e., $\mathcal{O}_{0}=\mathbb{Z}\lbrack\zeta_{3}]$ and $\mathcal{O}_{1728}=\mathbb{Z}\lbrack i]$. Observe that $p\mathcal{O}_{j}$ has the following prime ideal factorizations in $\mathcal{O}_{j}$:
			\begin{equation}
				\renewcommand{\arraystretch}{1.3}
				\renewcommand{\arraycolsep}{0.2cm}
				\begin{array}
					[c]{cccc}\hline
					p  & 5 & 7 & 13\\\hline
					p\mathbb{Z}\lbrack\zeta_{3}] &
					\left(  5\right)   & \left(  \zeta_{3}+3\right)  \left(  \zeta_{3}-2\right)
					& \left(  \zeta_{3}+4\right)  \left(  \zeta_{3}-3\right)  \\\hline
					p\mathbb{Z}\lbrack i] &  \left(
					2+i\right)  \left(  2-i\right)   & \left(  7\right)   & \left(  3+2i\right)
					\left(  3-2i\right) \\\hline
				\end{array}
				\label{tab:CMendomorphisms}
			\end{equation}
			Note that $E_j$ admits a $p$-isogeny exactly when $p\mathcal{O}_{j}=\mathfrak{p}_{1}\mathfrak{p}_{2}$ splits in $\mathcal{O}_{j}$. Further, for each prime $\fp_k$ that splits, since $\left\vert \mathcal{O}_{j}/\mathfrak{p}_{k}\right\vert =p$, $E_{j}$ admits a $p$-endomorphism.
			Letting $[a]$ represent the multiplication-by-$a$ map, these endomorphisms are given by $[a] + [b]\circ[\zeta_3]$ and $[a] + [b]\circ[i]$, respectively, where $\mathfrak{p}_k = (a + b\zeta_3)$ or $(a + b i)$. Thus these endomorphisms are defined over $\mathcal{O}_j$, as are the isogenies. This means the two isogenies are, in fact, just endomorphisms, and the curves must be $K$-isomorphic.
		\end{proof}
		
		As a consequence of Proposition~\ref{Proponjs}, we see that it is impossible to have elliptic curves $E_1$ and $E_2$ with the same $j$-invariant $0$ or $1728$ that are $p$-isogenous discriminant ideal twins with $p\in\{5,7,13\}$. We now shift our focus to the case of $2$- and $3$-isogenies. Before proving our main result, we require the following lemma.
		
		\begin{lemma}
			\label{j0or1728} Let $E_{1}$ and $E_{2}$ be elliptic curves defined over a number field $K$. 
			Suppose that $j=j(E_{1})=j(E_{2})\in\left\{  0,1728\right\}  $ and let
			\begin{equation}\label{defofmtwist}
				m=\left\{
				\begin{array}
					[c]{cl}%
					6 & \text{if }j\! =0,\\
					4 & \text{if }j\! =1728.
				\end{array}
				\right.
			\end{equation}
			Then for $p \in \{2,3,5,7,13 \}$, $E_1$ and $E_2$ are (non-$K$-isomorphic) $p$-isogenous if and only if the following hold:
			\begin{itemize}
				\item[$(i)$] $(p,j)=(2,1728)$ and $i \not \in K$;
				\item[$(ii)$] $(p,j)=(3,0)$ and $\zeta_3 \not \in K$.
			\end{itemize}
			Further, there exists $d\in K^{\times}/\left(  K^{\times}\right)  ^{m}$  such that  $E_{i}$ is $K$-isomorphic to $E_{i,j}(d)$ where $E_{i,j}(d)$ is as given below:
			\begin{align*}
				E_{1,1728}\!\left(  d\right)   &  :y^{2}=x^{3}+dx,\qquad & E_{1,0}\!\left(
				d\right)   &  :y^{2}=x^{3}+d,\\
				E_{2,1728}\!\left(  d\right)   &  :y^{2}=x^{3}-4dx,\qquad & E_{2,0}\!\left(
				d\right)   &  :y^{2}=x^{3}-27d.
			\end{align*}
		\end{lemma}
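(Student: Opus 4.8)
The plan is to handle the two directions of the biconditional separately and then extract the explicit models. First I would establish the forward direction: suppose $E_1$ and $E_2$ are non-$K$-isomorphic $p$-isogenous elliptic curves with $j(E_1)=j(E_2)=j\in\{0,1728\}$. By Lemma~\ref{numberpiso}, a curve with $j$-invariant $j$ admits a $p$-isogeny to a curve with the \emph{same} $j$-invariant over $\overline{K}$ only in the cases recorded in \eqref{tab:ModPoly}; this immediately rules out $(p,j)\in\{(2,0),(3,1728),(5,0),(7,1728)\}$, while $(5,1728),(7,0),(13,0),(13,1728)$ survive over $\overline{K}$ but are eliminated over $K$ by Proposition~\ref{Proponjs}, which shows that in those cases the isogeny is an endomorphism and the two curves are $K$-isomorphic — contradicting our hypothesis. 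This leaves exactly $(p,j)=(2,1728)$ and $(p,j)=(3,0)$. In each of these two remaining cases I must further argue that $i\notin K$ (resp.\ $\zeta_3\notin K$): this follows from Lemma~\ref{SameJCM}, since $E_1$ and $E_2$ have the same $j$-invariant and are non-$K$-isomorphic but isogenous, hence have CM, and they become isomorphic precisely over fields containing their CM field $\mathbb{Q}(i)$ (resp.\ $\mathbb{Q}(\zeta_3)$); so if the CM field were contained in $K$ the curves would already be $K$-isomorphic.

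Next I would prove the converse. Suppose $(p,j)=(2,1728)$ with $i\notin K$ (resp.\ $(p,j)=(3,0)$ with $\zeta_3\notin K$). Every elliptic curve over $K$ with $j$-invariant $1728$ (resp.\ $0$) is a twist of $y^2=x^3+x$ (resp.\ $y^2=x^3+1$), so there is $d\in K^\times/(K^\times)^m$ with $m=4$ (resp.\ $m=6$) such that $E_1\cong_K E_{1,1728}(d)\colon y^2=x^3+dx$ (resp.\ $E_{1,0}(d)\colon y^2=x^3+d$). It then suffices to exhibit an explicit $p$-isogeny from $E_{1,j}(d)$ to the curve $E_{2,j}(d)$ named in the statement, defined over $K$, and to check that this isogeny is not a $K$-isomorphism. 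For $j=1728$, the standard $2$-isogeny with kernel generated by the $2$-torsion point $(0,0)$ sends $y^2=x^3+dx$ to $y^2=x^3-4dx$; for $j=0$, the standard $3$-isogeny with kernel the subgroup of order $3$ fixed by Galois sends $y^2=x^3+d$ to $y^2=x^3-27d$ (these are the classical dual-isogeny formulas, e.g.\ \cite[Chapter III]{Silverman2009}, and the kernel being $K$-rational is exactly the content of the reduction-type analysis of the kernel polynomials in \cite{Tsukazaki} invoked in Proposition~\ref{Proponjs}). That $E_{2,j}(d)$ really has the same $j$-invariant as $E_{1,j}(d)$ is immediate from the shape of the equations, and non-$K$-isomorphism again follows from Lemma~\ref{SameJCM}: a $K$-isomorphism between two curves with $j$-invariant $1728$ (resp.\ $0$) is given by $[u,0,0,0]$ with $u^4\in K^\times$ (resp.\ $u^6\in K^\times$) relating the leading coefficients, and one checks $-4\notin (K^\times)^4$ when $i\notin K$ (resp.\ $-27\notin(K^\times)^6$ when $\zeta_3\notin K$); alternatively, a $K$-isomorphism would contradict the CM-collapse criterion of Lemma~\ref{SameJCM}.

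For the final clause — that $E_i$ is $K$-isomorphic to $E_{i,j}(d)$ for $i=1,2$ with a \emph{single} parameter $d$ — I would note that once $E_1\cong_K E_{1,j}(d)$ is fixed, the isogenous curve $E_2$ is forced: it is the codomain of the (unique up to the dual) $K$-rational $p$-isogeny out of $E_{1,j}(d)$, which we have just identified as $E_{2,j}(d)$, so $E_2\cong_K E_{2,j}(d)$ with the same $d$. One small point to be careful about is the ambiguity in Remark~\ref{SameJ}-type phenomena and the fact that over $\overline{K}$ there are two $p$-isogenies in the $(7,0),(13,*)$ cases but only in the genuinely surviving cases $(2,1728),(3,0)$ is there a unique one up to duality; I should make sure the statement "$E_i$ is $K$-isomorphic to $E_{i,j}(d)$" is read with $E_1$ the source and $E_2$ the target consistently.

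The main obstacle I anticipate is not any single hard computation but rather the bookkeeping around $K$-rationality: I must be sure that the twisting parameter $d$ can be chosen uniformly for both $E_1$ and $E_2$, that the explicit $2$- and $3$-isogeny formulas are defined over $K$ exactly under the stated hypotheses $i\notin K$, $\zeta_3\notin K$ (here the input is precisely the field-of-definition data for the kernel polynomials from \cite{Tsukazaki} used in Proposition~\ref{Proponjs}), and that "non-$K$-isomorphic" is genuinely equivalent to the CM field not lying in $K$ — all three threads converge on a careful application of Lemma~\ref{SameJCM}, so the real work is organizing that lemma's consequences cleanly rather than proving anything new.
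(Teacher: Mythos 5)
Your proof is correct and follows essentially the same route as the paper: narrow $(p,j)$ to $(2,1728)$ and $(3,0)$ via Lemma~\ref{numberpiso} and Proposition~\ref{Proponjs}, realize $E_1$ as a twist $E_{1,j}(d)$, write down the explicit $2$- resp.\ $3$-isogeny to $E_{2,j}(d)$, and decide $K$-isomorphism by solving $u^4=-4$ resp.\ $u^6=-27$ in $K$ (which the paper does directly where you route through Lemma~\ref{SameJCM}, a cosmetic difference). One small correction: the hypotheses $i\notin K$ and $\zeta_3\notin K$ play no role in the $K$-rationality of the isogeny $\psi_j$ — the kernel $\{O,(0,0)\}$ for $j=1728$, and the subgroup $\{O,(0,\pm\sqrt{d})\}$ for $j=0$, are Galois-stable over any $K$, and the paper's explicit rational maps $L_j$ are visibly defined over $K$; those hypotheses are used only to ensure $E_{1,j}(d)$ and $E_{2,j}(d)$ are not $K$-isomorphic, so your remark about the isogeny formulas being $K$-defined ``exactly under the stated hypotheses'' is off, though it does not affect the validity of your argument.
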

		
		\begin{proof}
			The case when $p \in \{5,7,13\}$ was settled in Proposition~\ref{Proponjs}. So we may suppose that $p\in \{2,3\}$.
			If $j=j(E_{1})\in\left\{  0,1728\right\}  $, then $E_1$ is a twist of the elliptic curve $E_{1,j}(1)$ by \cite[Proposition~X.5.4]{Silverman2009}.
			Consequently, there exists a $d\in K^{\times}/\left(  K^{\times}\right)  ^{m}$ such that $E_{1}$ is $K$-isomorphic to $E_{1,j}\!\left(  d\right)  $. By Lemma~\ref{numberpiso}, there is exactly one $p$-isogeny admitted by $E_{1}$ to an elliptic curve of the same $j$-invariant. We write this isogeny explicitly as follows. Let
			\[
			L_{0}=\frac{x^{3}+4d}{x^{2}}\qquad\text{and}\qquad L_{1728}=\frac{x^{2}+d}{x}.
			\]
			Now observe that
			\begin{align*}
				\left(  y\frac{d}{dx}L_{0}\right)  ^{2}  & =\left(  x^{3}+d\right)  \left(
				\frac{d}{dx}L_{0}\right)  ^{2}=\left(  x^{3}+d\right)  \left(  \frac{x^{3}%
					-8d}{x^{3}}\right)  ^{2}=L_{0}^{3}-27d,\\
				\left(  y\frac{d}{dx}L_{1728}\right)  ^{2}  & =\left(  x^{3}+dx\right)
				\left(  \frac{d}{dx}L_{1728}\right)  ^{2}=\left(  x^{3}+dx\right)  \left(
				\frac{x^{2}-d}{x^{2}}\right)  ^{2}=L_{1728}^{3}-4dL_{1728}.
			\end{align*}
			It follows that the map $\psi_{j}:E_{1,j}\!\left(  d\right)  \rightarrow E_{2,j}\!\left(  d\right)  $ defined by $\psi_{j}(x,y)=\left(  L_{j},y\frac{d}{dx}L_{j}\right)  $ is an isogeny of degree $p$ where $(p,j) \in \{(2,1728),(3,0) \}$.
			
			By the above, if $E_1$ and $E_2$ are $p$-isogenous, we may assume without loss of generality that $E_i=E_{i,j}(d)$ for some $d\in K^\times$. Note that if $E_1$ and $E_2$ are $K$-isomorphic, then we have an equivalence with the existence of a non-zero $u\in K$ such that $\tau=[u,0,0,0]$ is a $K$-isomorphism from $E_{2,j}(d)\rightarrow E_{1,j}(d)$. Let $F_j$ denote the codomain of $\tau$. Then
			\[
			F_{0}:y^{2}=x^{3}-27u^{-6}d\qquad\text{and}\qquad F_{1728}:y^{2}=x^{3}%
			-4u^{-4}d.
			\]
			It follows that $E_{1,j}(d)=F_{j}$ if and only if $(i)$ $u^{6}=-27$ for $j=0$ or $(ii)$ $u^{4}=-4$ for $j=1728$. These equations hold if and only if $(i)$ $\zeta_{3}\in K$ for $j=0$ and $(ii)$ $i\in K$ for $j=1728$.
		\end{proof}

		Next, in Theorem~\ref{thmforj-_1728}, we provide necessary conditions for two elliptic curves with $j$-invariants both $0$ or $1728$ to be discriminant ideal twins.
		In the theme of this paper, we classify all $3$-isogenous discriminant ideal twins and discriminant twins, but in the $2$-isogenous case, there is no converse, and examples to this effect are given after the theorem.

		\begin{theorem}\label{thmforj-_1728}
			Let $K$ be a number field and let $E_1$ and $E_2$ be $p$-isogenous elliptic curves defined over $K$ for $p \in \{2,3,5,7,13 \}$. 
			Let $\zeta_3 = \frac{-1 + \sqrt{-3}}{2}$ be a third root of unity and $i = \sqrt{-1}$ a fourth root of unity.
			Suppose further that $j=j(E_{1} )=j(E_{2})\in\left\{  0,1728\right\}  $.
			If $E_{1}$ and~$E_{2}$ are discriminant ideal twins, then one of the following holds:
			
			\begin{enumerate}
				\item[($a$)] $(p,j)=(3,0)$, $\zeta_{3}\not \in K$, and the ideal $3\mathcal{O}_{K}$ is a square. 
				% \textcolor{red}{\sout{Conversely, if $\zeta_{3}\not \in K$ and the ideal $3\mathcal{O}_{K}$ is a square, then $E_1$ and $E_2$ are discriminant ideal twins;}}
				
				\item[($b$)] $(p,j)=(2,1728)$, $i\not \in K$, and the ideal $2\mathcal{O}_{K}$ is a square.
			\end{enumerate}
			Conversely,
			\begin{enumerate}
				\item[($c$)] if $(p,j)=(3,0)$, $\zeta_{3}\not \in K$, and the ideal $3\mathcal{O}_{K}$ is a square, then $E_1$ and $E_2$ are discriminant ideal twins. Further, if $3$ is a square in $K$, then $E_1$ and $E_2$ are discriminant twins. 
				\item[($d$)] if $(p,j)=(2,1728)$, then there are no discriminant twins. 
			\end{enumerate}
			%Let $K$ be a number field and let $E_1$ and $E_2$ be $p$-isogenous elliptic curves defined over $K$ for $p \in \{2,3,5,7,13 \}$. Suppose further that $j=j(E_{1} )=j(E_{2})\in\left\{  0,1728\right\}  $.
			%If $E_{1}$ and~$E_{2}$ are discriminant ideal twins, then one of the following holds:
			%
			%\begin{enumerate}
			%\item[($a$)] $(p,j)=(3,0)$, $\zeta_{3}\not \in K$, and the ideal $3\mathcal{O}_{K}$ is a square. Conversely, if $\zeta_{3}\not \in K$ and the ideal $3\mathcal{O}_{K}$ is a square, then $E_1$ and $E_2$ are discriminant ideal twins;
			%
			%\item[($b$)] $(p,j)=(2,1728)$, $i\not \in K$, and the ideal $2\mathcal{O}_{K}$ is a square.
			%\end{enumerate}
			%In addition, when $(p,j)=(3,0)$, $\zeta_3 \not \in K$, and $3$ is a square in $K$, $E_1$ and $E_2$ are discriminant twins. When $(p,j)=(2,1728)$, there are no discriminant twins. 
		\end{theorem}
		
		\begin{proof}
			Suppose that $E_{1}$ and $E_{2}$ are $p$-isogenous discriminant ideal twins defined over $K$ with $j=j(E_{1})=j(E_{2})\in\left\{  0,1728\right\}  $. In particular, $E_1$ and $E_2$ are not $K$-isomorphic. By Lemma~\ref{j0or1728}, we have that $\left(  p,j\right)  \in\left\{\left(  2,1728\right)  ,\left(  3,0\right)  \right\}  $ with the further condition that $i \not \in K$ if $j=1728$ and $\zeta_3 \not \in K$ if $j=0$. Moreover, $E_{i}$ is $K$-isomorphic to $E_{i,j}=E_{i,j}(d)$ for some $d\in K^{\times}/(K^{\times})^{m}$.
			In the proof below, we reference the discriminants of these elliptic curves, which are as follows:
			\[
			\begin{array}
				[c]{ccccccc}%
				\Delta(E_{1,1728}(d)) & = & -2^{6}d^{3}, & \qquad & \Delta(E_{1,0}(d)) & = &
				-2^{4}3^{3}d^{2},\\
				\Delta(E_{2,1728}(d)) & = & 2^{12}d^{3}, &  & \Delta(E_{2,0}(d)) & = &
				-2^{4}3^{9}d^{2}.%
			\end{array}
			\]
			
			For each prime $\mathfrak{p}$ of $K$, let $E_{i,j,\mathfrak{p}}$ be a $\mathfrak{p}$-minimal model of $E_{i,j}$ over $K$, given by an integral Weierstrass model. 
			Consider the inclusion $\iota: \OK \hookrightarrow R_\fp$.
			By Lemma~\ref{newSilverman}, there exists $u_{i,j,\mathfrak{p}}\in \OK \subset R_{\fp}$ such that $u_{i,j,\mathfrak{p}}^{-12}\Delta(E_{i,j})=\Delta(E_{i,j,\mathfrak{p}})$. Since $E_{1,j}$ and $E_{2,j}$ are discriminant ideal twins, there exists a $\mu_{j,\mathfrak{p}}\in \OK \cap R_{\fp}^{\times}$, i.e., $\nup(\mu_{j,\fp})=0$, such that
			\begin{align*}
				\Delta(E_{1,j,\mathfrak{p}})=\mu_{j,\mathfrak{p}}\Delta(E_{2,j,\mathfrak{p}%
				})\qquad & \Longrightarrow\qquad u_{1,j,\mathfrak{p}}^{-12}\Delta(E_{1,j})%
				=\mu_{j,\mathfrak{p}}u_{2,j,\mathfrak{p}}^{-12}\Delta(E_{2,j}).\\
				& \Longrightarrow\qquad\left(  \frac{u_{2,j,\mathfrak{p}}}{u_{1,j,\mathfrak{p}%
				}}\right)  ^{12}=\mu_{j,\mathfrak{p}}\frac{\Delta(E_{2,j})}{\Delta(E_{1,j}%
					)}=\left\{
				\begin{array}
					[c]{ll}%
					3^{6}\mu_{0,\mathfrak{p}} & \text{if }j=0,\\
					-2^{6}\mu_{1728,\mathfrak{p}} & \text{if }j=1728.
				\end{array}
				\right.
			\end{align*}
			By the above, we have that $\left(  \frac{u_{2,j,\mathfrak{p}}}{u_{1,j,\mathfrak{p}
			}}\right)  ^{2}R_{\fp}=pR_{\fp}$. Consequently, the ideal $p R_\fp$ is a square for each prime $\fp$. This, in turn, implies that the ideal $p\mathcal{O}_K$ is a square.
			%if $\mathfrak{p}|p$, then $\nu_{\mathfrak{p}}(p)$ is a positive even integer.
			
			We now show the converse of $\left(  a\right)  $. To this end, suppose $\zeta_{3}\not \in K$ and that the ideal $3\mathcal{O}_{K}$ is a square. Let $d\in K^{\times}$. We claim that $E_{1,0}(d)$ and $E_{2,0}(d)$ are $3$-isogenous discriminant ideal twins. Now observe that for each prime $\mathfrak{p}\nmid3$, the $\mathfrak{p}$-adic valuations of the minimal discriminants of $E_{1}$ and $E_{2}$ at $\mathfrak{p}$ are equal by Corollary \ref{DokLemma}. So suppose that $\mathfrak{p}|3$ and let
			$\nu_{\mathfrak{p}}(3)=e$. Then
			\[
			\nu_{\mathfrak{p}}(\Delta(E_{1,0}(d)))=3e+2d\qquad\text{and}\qquad
			\nu_{\mathfrak{p}}(\Delta(E_{2,0}(d)))=9e+2d.
			\]
			Since $3\mathcal{O}_{K}$ is a square ideal, we have that $e$ is even. Consequently, 
			$$\nu_{\mathfrak{p}}(\Delta(E_{1,0}(d)))\equiv\nu_{\mathfrak{p}}(\Delta(E_{2,0}(d)))\ \operatorname{mod}12.$$
			It follows from Lemma~\ref{KNforpeq3} that the $\mathfrak{p}$-adic valuations of the minimal discriminants of $E_{1,0}(d)$ and $E_{2,0}(d)$ are equal. This shows that $E_{1,0}(d)$ and $E_{2,0}(d)$ are discriminant ideal twins for each $d\in K^{\times}$.

			It remains to consider the case when $E_{1}$ and $E_{2}$ are discriminant twins. When this is the case, for each prime $\mathfrak{p}$ of $K$, there exists $u_{1,j,\mathfrak{p}},u_{2,j,\mathfrak{p}}\in \OK$ such that
			\begin{align*}
				u_{1,j,\mathfrak{p}}^{-12}\Delta(E_{1,j})=u_{2,j,\mathfrak{p}}^{-12}%
				\Delta(E_{2,j})\qquad & \Longrightarrow\qquad\left(  \frac{u_{2,j,\mathfrak{p}%
				}}{u_{1,j,\mathfrak{p}}}\right)  ^{12}=\frac{\Delta(E_{2,j})}{\Delta(E_{1,j}%
					)}\\
				& \Longrightarrow\qquad\left(  \frac{u_{2,j,\mathfrak{p}}}{u_{1,j,\mathfrak{p}%
				}}\right)  ^{2}=\left\{
				\begin{array}
					[c]{ll}%
					3 & \text{if }j=0,\\
					-2 & \text{if }j=1728.
				\end{array}
				\right.
			\end{align*}
			% Since $8$ does not divide $12$, 
			In particular, for each $\fp$, $3$ (resp. $-2$) is a square in $R_\fp$ for $j=0$ (resp. $j=1728$). We conclude by the Grunwald-Wang Theorem~\cite{MR33801} that $3$ (resp. $-2$) is a square in $K$ if $j=0$ (resp. $j=1728$).
			
			We now break into two cases to finish examining discriminant twins. 
			We first show that if $(p,j)=(3,0)$, $3$ is a square of $K$, and $\zeta_{3}\not \in K$, then $E_{1,0}(d)$ and $E_{2,0}(d)$ are $3$-isogenous discriminant twins for each $d \in K^\times$. Now let $3=z^2$ for some $z \in K$. By the proof of $(a)$, we have that $E_{1,0}(d)$ and $E_{2,0}(d)$ are discriminant ideal twins. With notation as above,
			
			%For a prime $\mathfrak{p}$ of $K$, let $E_{i,\mathfrak{p}}$ denote a local minimal model of $E_{i,0}(d)$ at $\mathfrak{p}$. Then there exists $u_{i,\mathfrak{p}}\in R_{\mathfrak{p}}$ such that $u_{i,\mathfrak{p}}^{-12}\Delta(E_{i,0}(d))=\Delta(E_{i,\mathfrak{p}})$. By the proof of $\left(  a\right)  $, we have that $E_{1,0}(d)$ and $E_{2,0}(d)$ are discriminant ideal twins.
			% $\mu_{\mathfrak{p}}\in R_{\mathfrak{p}}^{\times}$
			% $\mu_{0,\mathfrak{p}}^{-1}\Delta(E_{1,0,\mathfrak{p}})=\Delta
			% (E_{2,0,\mathfrak{p}})$. Therefore
			\[
			\mu_{0,\mathfrak{p}}^{-1}=\frac{\Delta(E_{2,0,\mathfrak{p}})}{\Delta(E_{1,0,\mathfrak{p}%
				})}=\frac{u_{2,0,\mathfrak{p}}^{-12}\Delta(E_{2,0}(d))}{u_{1,0,\mathfrak{p}}%
				^{-12}\Delta(E_{1,0}(d))}=\left(  \frac{u_{1,0,\mathfrak{p}}}{u_{2,0,\mathfrak{p}%
			}}z\right)  ^{12}%
			\]
			since $\frac{\Delta(E_{2,0}(d))}{\Delta(E_{1,0}(d))}=3^{6}$. In particular,
			there exists $\kappa_{\mathfrak{p}}\in \OK$ with $\nup(\kappa_\fp)=0$ such that
			$\mu_{0, \mathfrak{p}}=\kappa_{\mathfrak{p}}^{12}$. Now let $E_{1,0,\mathfrak{p}%
			}^{\prime}$ be the $K$-isomorphic elliptic curve obtained from
			$E_{1,0,  \mathfrak{p}}$ via the isomorphism $\left[  \kappa_{\mathfrak{p}},0,0,0\right]  $. Since $\nup(\kappa_{\mathfrak{p}})=0$, we have
			that $E_{1,0,\mathfrak{p}}^{\prime}$ is a $\mathfrak{p}$-minimal model.
			Furthermore, we obtain the following equalities:%
			\[
			\Delta(E_{1,0,\mathfrak{p}}^{\prime})=\kappa_{\mathfrak{p}}^{-12}\Delta
			(E_{1,0,\mathfrak{p}})=\mu_{0,\mathfrak{p}}^{-1}\Delta(E_{1,0,\mathfrak{p}}%
			)=\Delta(E_{2,0,\mathfrak{p}}).
			\]
			This shows that $E_{1,0}(d)$ and $E_{2,0}(d)$ are $3$-isogenous discriminant twins for each
			$d\in K^{\times}$.
			
			It remains to show that if $(p,j)=(2,1728)$, then there are no $2$-isogenous
			discriminant twins. To this end, it suffices to show that $E_{1,1728}(d)$ and
			$E_{2,1728}(d)$ are not discriminant twins for each $d\in K^\times$. Towards a
			contradiction, suppose that $E_{1,1728}(d)$ and $E_{2,1728}(d)$ are
			$2$-isogenous discriminant twins for some $d\in K^\times$. By what was established earlier, we know that $-2$ is a
			square in $K$. If $i\in K$, then $E_{1,1728}(d)$ and $E_{2,1728}(d)$ are not
			$2$-isogenous discriminant twins by Proposition \ref{Proponjs}. So we may
			assume that $i\not \in K$. Now let $\mathfrak{p}$ be a prime above $2$ in $K$.
			% Since $i\not \in K$, we have that $i\not \in K_{\mathfrak{p}}$ by \cite[Proposition II.9.6]{neukirch}. 
			Since $-2$ is a square in $K$,
			there exists $z\in\OK$ such that $-2=z^{2}$. Now let
			$E_{1,1728,\fp}^{\prime}(d)$ be the $K$-isomorphic elliptic curve
			obtained from $E_{1,1728}(d)$ via the isomorphism $\tau=[z,0,0,0]$. Then%
			\[
			\frac{\Delta(E_{1,1728,\fp}^{\prime}(d))}{\Delta(E_{2,1728}(d))}=-1.
			\]
			By assumption, $E_{1,1728}(d)$ and $E_{2,1728}(d)$ are discriminant
			twins. So there are $u_{1, 1728, \mathfrak{p}},u_{2, 1728, \mathfrak{p}}\in K^\times$ such that%
			\[
			u_{1,1728, \mathfrak{p}}^{-12}\Delta(E_{1,1728,\fp}^{\prime}(d))=u_{2, 1728, \mathfrak{p}%
			}^{-12}\Delta(E_{2,1728}(d))\quad\Longrightarrow\quad\left(  \frac
			{u_{1,1728, \mathfrak{p}}}{u_{2, 1728, \mathfrak{p}}}\right)  ^{12}=\frac{\Delta
				(E_{1,1728,\fp}^{\prime}(d))}{\Delta(E_{2,1728}(d))}=-1.
			\]
			But this is impossible since $i\not \in K$. This shows that there
			are no $2$-isogenous discriminant twins, which concludes the proof.
		\end{proof}
		
		The following example illustrates the necessity of the conditions stated in Theorem~\ref{thmforj-_1728} for discriminant twins. 
		
		\begin{example}\label{Exampledisctwinsspecialj}
			Consider the class number one number field $K=\mathbb{Q}(\sqrt[4]{6})$. Then the pairs of elliptic curves $\left(  E_{1,0}%
			(1),E_{2,0}(1)\right)  $ and $\left(  E_{1,1728}(1),E_{2,1728}(1)\right)  $
			are discriminant ideal twins. Indeed, the minimal discriminant ideals of these elliptic curves are as given below:%
			\begin{align*}
				\mathcal{D}^{\text{min}}(E_{1,0}(1))  & =\mathcal{D}^{\text{min}}%
				(E_{2,0}(1))=\left(  2\sqrt[4]{6^{3}}-3\sqrt[4]{6^{2}}+5\sqrt[4]{6}-8\right)
				^{4}\mathcal{O}_{K},\\
				\mathcal{D}^{\text{min}}(E_{1,1728}(1))  & =\mathcal{D}^{\text{min}%
				}(E_{2,1728}(1))=\left(  2\sqrt[4]{6^{3}}-3\sqrt[4]{6^{2}}+5\sqrt[4]%
				{6}-8\right)  ^{12}\mathcal{O}_{K}.
			\end{align*}
			Moreover, the elliptic curves $E_{1,1728}(1)$ and $E_{2,1728}(1)$ are not discriminant twins by Theorem~\ref{thmforj-_1728}. We also have from the theorem that $E_{1,0}(1)$ and $E_{2,0}(1)$ are not discriminant twins since
			$\sqrt{3}\not \in K$.
		\end{example}
		
		The converse to Theorem \ref{thmforj-_1728} $(b)$ is false, as illustrated by the
		following example:
		
		\begin{example}\label{failureat2specialj}
			Consider the class number one number field $K=\mathbb{Q}(\sqrt[6]{2})$.  This field contains $\sqrt{2}$, but not $i$ and thus satisfies Theorem \ref{thmforj-_1728}.  However, the elliptic curves $E_{1,1728}(1)$ and
			$E_{2,1728}(1)$ are not discriminant ideal twins since their minimal
			discriminant ideals are:%
			\[
			\mathcal{D}^{\text{min}}(E_{1,1728}(1))=\left(  \sqrt[6]{2}\right)
			^{12}\mathcal{O}_{K}\qquad\text{and}\qquad\mathcal{D}^{\text{min}}%
			(E_{2,1728}(1))=\left(  \sqrt[6]{2}\right)  ^{24}\mathcal{O}_{K}.
			\]
		\end{example}
		%%%%%%%%%%% Section 3.2 %%%%%%%%
		%\subsection{Barrios-parameterized curves with same $j$-invariant}
		\subsection{Non-singular $j$-invariants}
		
		In this section we examine which $t_0$ give $j_{p, 1}(t_0) = j_{p, 2}(t_0)$ and over which fields the curves $\mathcal{C}_{p, i}(t_0)$, parameterized by such $t_0$, are isogenous versus isomorphic.
		In particular, we can additionally assume that $j_{p, 1}(t_0) \neq 0, 1728$.
		Finding such $t_0$ is as simple as finding the roots of $t^p(j_{p, 1}(t) - j_{p, 2}(t)) = 0$.
		%The roots of these polynomials are the $t_0$ such that $j_{p, 1}(t_0) = j_{p, 2}(t_0)$.
		For each root $t_0$ of $t^p(j_{p,1}(t) - j_{p, 2}(t))$ there are three possibilities for $\mathcal{C}_{p, 1}(t_0)$ and $\mathcal{C}_{p, 2}(t_0)$.  
		First, such $t_0$ could produce singular curves.  Such curves will have $j$-invariants $0$ or $1728$ and discussed in the section above.  
		Next, if $t_0$ is only defined over number fields, $K$, such that the CM endomorphisms of $\mathcal{C}_{p, i}(t_0)$ are already defined over $K$, 
		then $\mathcal{C}_{p, 1}(t_0)$ and $\mathcal{C}_{p, 2}(t_0)$ will be isomorphic.
		The final possibility is that $t_0$ is defined over a field $K$ not containing the CM endomorphisms of $\mathcal{C}_{p, i}(t_0)$.
		It is in this final case that we must check for discriminant ideal twins.
		
		\begin{theorem}
			\label{Thm:equaljinv}
			If $E_1, E_2$ are $p$-isogenous discriminant ideal twins defined over a number field $K$ with $j = j(E_1) = j(E_2)$, but $j \neq 0$ or $1728$, then $j$ is as follows.
			For $p = 5, 13$, $t_0 \in K$ and
			\begin{equation}
				j =
				\begin{cases}
					56576t_0 + 632000 & \text{if}\ t_0 \text{ is a root of } t^2 - 125, \\
					956448000t_0 + 3448440000 & \text{if}\ t_0 \text{ is a root of } t^2 - 13. \\
				\end{cases}
			\end{equation}
			For $p = 2, 3, 7$ we require that $p\mathcal{O}_K$ is a square but $\Q(\sqrt{-p})$ is not a subfield of $K$ and
			\begin{equation}
				j = 
				\begin{cases}
					8000 & p = 2 \text{ and } t_0 = 64, \\
					54000 & p = 3 \text{ and } t_0 = 27, \\
					-3375 & p = 7 \text{ and } t_0 = -7, \\
					16581375 & p = 7 \text{ and } t_0 = 7. \\
				\end{cases}
			\end{equation}
		\end{theorem}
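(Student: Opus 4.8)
The plan is to pass to the parameterized families of Theorem~\ref{parafamilies} and to solve $j_{p,1}(t)=j_{p,2}(t)$ directly. Since $j\neq 0,1728$, Theorem~\ref{parafamilies} gives $t_{0}\in K$ and $d_{0}\in\OK$ with $E_{1}\cong_{K}\Coo$ and $E_{2}\cong_{K}\Cto$; as $E_{1},E_{2}$ are elliptic curves, $t_{0}\neq 0$ and $\Delta_{p,i}(t_{0},d_{0})\neq 0$. Reading $j_{p,i}(t)$ off Table~\ref{ta:invariants} and clearing the common denominator $t^{p}$, the equation $j(E_{1})=j(E_{2})$ says precisely that $t_{0}$ is a root of $h_{p}(t):=t^{p}\bigl(j_{p,1}(t)-j_{p,2}(t)\bigr)\in\Z[t]$, so the first task is to factor $h_{p}$ for each $p$. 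For example $h_{2}(t)=-(t-64)(t+64)(t^{2}+47t+4096)$ and $h_{3}(t)=-(t-27)(t+27)(t^{2}+46t+729)(t^{2}-10t+729)$, with the longer factorizations for $p=5,7,13$ obtained in~\cite{GitHubDIT}.

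Next I would sort the roots of $h_{p}$. A root with $\Delta_{p,i}(t_{0},d_{0})=0$, or $t_{0}=0$, is discarded: by \cite[Lemma~3.3]{Bariso} such $t_{0}$ force $j_{p,1}(t_{0})=j_{p,2}(t_{0})\in\{0,1728\}$, which is covered by Theorem~\ref{thmforj-_1728}. For a remaining root, $\Co$ and $\Ct$ are $p$-isogenous with a common $j$-invariant outside $\{0,1728\}$; being discriminant ideal twins they are non-$K$-isomorphic, so Lemma~\ref{SameJCM} forces them to have complex multiplication by an order in an imaginary quadratic field $F=F(t_{0})$ pinned down by the singular modulus $j_{p,1}(t_{0})$, and moreover $F\not\subseteq K$ since they would be isomorphic over any field containing $F$. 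As $K\supseteq\Q(t_{0})$, this already requires $F(t_{0})\not\subseteq\Q(t_{0})$; discarding the roots with $F(t_{0})\subseteq\Q(t_{0})$—a finite check once each irrational root's $j$-value is matched against tables of singular moduli to identify $F(t_{0})$—leaves exactly $t_{0}=64$ for $p=2$, $t_{0}=27$ for $p=3$, $t_{0}=\pm 7$ for $p=7$, $t_{0}$ a root of $t^{2}-125$ for $p=5$, and $t_{0}$ a root of $t^{2}-13$ for $p=13$. In every one of these cases $F(t_{0})=\Q(\sqrt{-p})$, and evaluating $j_{p,1}$ at $t_{0}$ returns the values of $j$ in the statement.

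It remains to read the arithmetic conditions on $K$ off of \eqref{eq:Delatap12}. Writing $\minDi=\bigl(\Delta_{p,i}(t_{0},d_{0})\bigr)\mathfrak{a}_{i}^{12}$ as in Section~\ref{sec:mindiscs}, equality of the minimal discriminant ideals of $\Co$ and $\Ct$ yields
\[
(t_{0})^{p-1}=\bigl((p)\,\mathfrak{a}_{2}\mathfrak{a}_{1}^{-1}\bigr)^{12}
\]
in the group of fractional ideals of $K$, which is free abelian and hence torsion-free. For the surviving roots $(t_{0})=(2)^{6},(3)^{3},(7)$ when $p=2,3,7$, while $(t_{0})^{2}=(5)^{3}$ and $(t_{0})^{2}=(13)$ when $p=5,13$; in each case the displayed identity forces $p\OK$ to be the square of an integral ideal. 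When $p=5$ or $13$ this is automatic once $t_{0}\in K$—indeed $(13)=(t_{0})^{2}$ and $(5)=\bigl((t_{0})(5)^{-1}\bigr)^{2}$—so the only surviving constraint is $t_{0}\in K$; when $p=2,3,7$ one has $t_{0}\in\Q\subseteq K$ automatically, so ``$p\OK$ is a square'' is a genuine additional requirement. Combined with $\Q(\sqrt{-p})=F(t_{0})\not\subseteq K$ from the second paragraph, this is exactly the assertion.

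The main obstacle is the middle step: producing the complete factorizations of $h_{5},h_{7},h_{13}$ and, for each irrational root $t_{0}$, correctly determining $F(t_{0})$ and deciding whether $F(t_{0})\subseteq\Q(t_{0})$. The governing dichotomy is that a self-$p$-isogeny of a CM curve with $j$-invariant $j_{p,1}(t_{0})$ has kernel rational over a field avoiding $F$ exactly when $p$ ramifies in $F$ and the prime above $p$ is principal in the CM order (so $t_{0}$ is rational, or at least does not generate $F$), whereas when $p$ splits the two primes above $p$ are interchanged by $\operatorname{Gal}(F/\Q)$, forcing $\Q(t_{0})=F$ and hence the discarding of that root. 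These singular-modulus-to-order identifications are routine but not automatic, and are carried out in~\cite{GitHubDIT}. One should also note that, because $j_{1}=j_{2}$ here, Theorem~\ref{mainthm} is unavailable, so the minimal-discriminant-ideal comparison has to be made directly from \eqref{eq:Delatap12}, as above.
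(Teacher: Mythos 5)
Your proposal follows essentially the same route as the paper's proof: factor $t^p\bigl(j_{p,1}(t)-j_{p,2}(t)\bigr)$, discard the singular roots (these give $j\in\{0,1728\}$), discard the roots $t_0$ for which $\Q(t_0)$ already contains the CM order so that the curves are forced to be $K$-isomorphic, and then analyze the surviving roots. The one place you diverge is in how you extract the condition ``$p\OK$ is a square'': the paper computes the ratio $\minDo/\minDt=(p\OK)^{\pm 6}$ and argues locally that each $\nup(p)$ must be even for the $\fp$-adic discriminant valuations to agree modulo $12$, whereas you observe that equality of minimal discriminant ideals plus $\minDi=(\Delta_{p,i})\mathfrak{a}_i^{12}$ and \eqref{eq:Delatap12} give the identity $(t_0)^{p-1}=\bigl((p)\mathfrak{a}_2\mathfrak{a}_1^{-1}\bigr)^{12}$ in the torsion-free group of fractional ideals, from which the square condition falls out by taking roots; this is a slightly cleaner, global version of the same calculation and also explains uniformly why the condition is automatic for $p=5,13$. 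Two small cautions: your ``governing dichotomy'' paragraph is a heuristic rather than a proof (and as stated, ``$\Q(t_0)=F$'' should be ``$F\subseteq\Q(t_0)$'', since some discarded factors have degree $4$), but you correctly note that the actual verification is a finite computation matching $j_{p,1}(t_0)$ against singular moduli, which is exactly what the paper does; and, as in the paper, the argument only establishes the stated forward implication.
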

		
		\begin{proof}
			We first find the values of $t_0$ so that $j_{p, 1}(t_0) = j_{p,2}(t_0)$.  Then, in the third of the cases listed above, we classify when these $t_0$ give discriminant ideal twins.  First, go through the cases in order.
			
			The singular $t_0$ values are given by \cite{Bariso} in Lemma 3.3 and are roots of the following factors of $t^p(j_{p,1}(t) - j_{p, 2}(t))$ given in Table \ref{Tab:singt0}.
			In particular, these are the roots that correspond to $j$-invariant $0$ or $1728$ and thus we discard them.
			
			\begin{center}
				\begin{table}[ht]
					\caption{Singular Factors}
					\begin{tabular}{cc} \toprule
						$p$ & factor \\ \hline
						$ 2 $  & $ t + 64 $ \\ \hline
						
						$ 3 $  & $ t + 27 $ \\ \hline
						
						$ 5 $  & $ t^{2} + 22 \, t + 125 $ \\ \hline
						
						$ 7 $  & $ t^{2} + 13 \, t + 49 $ \\ \hline
						
						$ 13 $  & $ t^{2} + 6 \, t + 13 $ \\ 
						$  $  & $ t^{2} + 5 \, t + 13 $ \\ \bottomrule
					\end{tabular}
					\label{Tab:singt0}
				\end{table}
			\end{center}
			
			To compute the factors corresponding to the second case where the parameter $t_0 \in K$ gives rise to curves $\mathcal{C}_{p, 1}(t_0)$ and $\mathcal{C}_{p, 2}(t_0)$ already isomorphic over $K$, we can simply check.
			Using each non-singular factor $f(t)$ of $t^p(j_{p,1}(t) - j_{p, 2}(t))$ we construct the number field $K = \mathbb{Q}(t_0) = \mathbb{Q}[t]/f(t)$ and the curves $\mathcal{C}_{p, 1}(t_0)$ and $\mathcal{C}_{p, 2}(t_0)$.
			From here we simply check if the curves are isomorphic over $K$, or equivalently, that $K$ contains the curves' CM endomorphism ring.
			The factors that give rise to isomorphic curves are listed below, up to Galois-action, i.e., one per factor instead of all $d$ where $d$ is the degree of $f(t)$:
			\begin{center}
				\renewcommand{\arraystretch}{1.3}
				\renewcommand{\arraycolsep}{.05cm}
				\begin{tabular}{ccc} \toprule
					$p$ & $j$-invariant & factor \\  \hline
					$ 2 $ & $ -3375 $ & $ t^{2} + 47 \, t + 4096 $ \\ \hline
					
					$ 3 $ & $ 8000 $ & $ t^{2} + 46 \, t + 729 $ \\
					$  $ & $ -32768 $ & $ t^{2} - 10 \, t + 729 $ \\ \hline
					
					$ 5 $ & $ -32768 $ & $ t^{2} + 18 \, t + 125 $ \\
					$  $ & $ 287496 $ & $ t^{2} + 4 \, t + 125 $ \\
					$  $ & $ -884736 $ & $ t^{2} - 14 \, t + 125 $ \\ \hline
					$ 7 $ & $ -\frac{284544}{49} t_{0}^{3} - \frac{2845440}{49} t_{0}^{2} - \frac{569088}{49} t_{0} + 994752 $ & $ t^{4} + 10 \, t^{3} + 51 \, t^{2} + 490 \, t + 2401 $ \\
					$  $ & $ 54000 $ & $ t^{2} + 11 \, t + 49 $ \\
					$  $ & $ -884736 $ & $ t^{2} + 5 \, t + 49 $ \\
					$  $ & $ -12288000 $ & $ t^{2} - 11 \, t + 49 $ \\ \hline
					
					$ 13 $ & $ -\frac{22165248}{13} t_{0}^{3} - \frac{132991488}{13} t_{0}^{2} - \frac{221652480}{13} t_{0} + 10275264 $ & $ t^{4} + 6 \, t^{3} + 23 \, t^{2} + 78 \, t + 169 $ \\
					$  $ & $ \frac{447971328}{13} t_{0}^{3} - \frac{447971328}{13} t_{0}^{2} - \frac{11199283200}{13} t_{0} - 2994536448 $ & $ t^{4} - t^{3} - 12 \, t^{2} - 13 \, t + 169 $ \\
					$  $ & $ -\frac{272875500}{13} t_{0}^{3} + \frac{3820257000}{13} t_{0} + 1417905000 $ & $ t^{4} - t^{2} + 169 $ \\
					$  $ & $ 54000 $ & $ t^{2} + 7 \, t + 13 $ \\
					$  $ & $ 287496 $ & $ t^{2} + 4 \, t + 13 $ \\
					$  $ & $ -12288000 $ & $ t^{2} + 2 \, t + 13 $ \\
					$  $ & $ -884736000 $ & $ t^{2} - 3 \, t + 13 $ \\ \bottomrule
				\end{tabular}
			\end{center}
			
			What is left are the $p$-isogenous, non-$K$-isomorphic curves with the same $j$-invariant.  Several factors give curves that are already discriminant ideal twins:
			\begin{center}
				\begin{tabular}{ccc} \toprule
					$p$ & $j$-invariant & factor \\  \hline
					$ 5 $ & $ 56576 t_{0} + 632000 $ & $ t^{2} - 125 $ \\ \hline
					$ 13 $ & $ 956448000 t_{0} + 3448440000 $ & $ t^{2} - 13 $ \\ \bottomrule
				\end{tabular}
			\end{center}
			
			For the $p$-isogenous, non-$K$-isomorphic curves with the same $j$-invariant and different minimal discriminant ideals, 
			we see that their minimal discriminant ideals vary by a factor of 6 at the prime above $p$.
			\begin{center}
				\begin{tabular}{ccccc}
					$p$ & $j$-invariant & factor & $\mathfrak{D}^{min}_1/\mathfrak{D}^{min}_2$ & CM Field\\ \toprule
					$ 2 $ & $ 8000 $ & $ t - 64 $ & $ (2\mathcal{O}_K)^{-6} $ & $\Q(\sqrt{-2})$\\ \hline
					
					$ 3 $ & $ 54000 $ & $ t - 27 $ & $ (3 \mathcal{O}_K)^{-6}$ & $\Q(\sqrt{-3})$ \\ \hline
					
					$ 7 $ & $ -3375 $ & $ t + 7 $ & $ (7 \mathcal{O}_K)^6$  & $\Q(\sqrt{-7})$ \\ \hline
					$ 7 $ & $ 16581375 $ & $ t - 7 $ & $ (7\mathcal{O}_K)^6$ & $\Q(\sqrt{-7})$ \\ \bottomrule
				\end{tabular}
			\end{center}
			As they have CM by an order in $\Q(\sqrt{-p})$, if $K$ contains $\Q(\sqrt{-p})$, they are isomorphic.
			Thus, these curves can only lead to discriminant ideal twins in fields that do not contain $\Q(\sqrt{-p}).$
			For them to be discriminant ideal twins, we need their discriminant valuations to all be congruent modulo $12$.  As they currently differ only at primes above $p$ and only by $6e$th powers where $\nup(p) = e$, they can only potentially be discriminant twins in fields where $e$ is even for all primes $\fp$ dividing $p\mathcal{O}_K$, i.e., when $p\mathcal{O}_K$ is a square.  
		\end{proof}
		
		Note that we only proved one direction, that if $E_1, E_2$ are discriminant ideal twins with the same $j$-invariant, then the $j$-invariant is one from a small list.
		To say anything in the other direction, we will need results from the next section.

		\section{General  classification of \texorpdfstring{$p$}{p}-isogenous discriminant ideal twins}\label{sec:pfmainthm}
		
		In this section, we will prove our main result, which classifies discriminant ideal twins over number fields. In particular, we will find explicit conditions on $t$ for which the elliptic curve $\Ci$ are discriminant (ideal) twins. As Lemma~\ref{lemmapotgoodred} showed that $p$-isogenous  discriminant ideal twins must have everywhere potentially good reduction, we start by determining conditions on $t$ for which the elliptic curves $\Ci$ have potentially good reduction.
		
		\begin{lemma}
			\label{lem:potentiallygood}Let $K$ be a number field and let $p\in\left\{
			2,3,5,7,13\right\}  $. Suppose further that $E_{1}$ and $E_{2}$ are
			$p$-isogenous elliptic curves over $K$ such that the $j$-invariants of $E_{1}$
			and $E_{2}$ are not both identically $0$ or $1728$. Then $E_{1}$ and $E_{2}$
			have potentially good reduction at a prime $\mathfrak{p}$ of $K$ if and only
			if there are $t_{0}\in K$ and $d_{0}\in\mathcal{O}_{K}$ with $0\leq\nu_{\mathfrak{p}%
			}(t_{0})\leq\frac{12\nu_{\mathfrak{p}}(p)}{p-1}$ such that $E_{i}$ is
			$K$-isomorphic to $\mathcal{C}_{p,i}(t_{0},d_{0})$.
		\end{lemma}
		
		\begin{proof}
			By Theorem \ref{parafamilies}, there are $t_{0}\in K$ and $d_{0}\in\mathcal{O}_{K}$ such that $E_{i}$ is $K$-isomorphic to $\mathcal{C}_{p,i}\!\left(  t_{0},d_{0}\right)  $. For a prime $\fp$ of $K$, it is the case that $\nup(j(\mathcal{C}_{p,1}\!\left(  t_{0},d_{0}\right) ))\geq 0$ if and only if $\nup(j(\mathcal{C}_{p,2}\!\left(  t_{0},d_{0}\right) ))\geq0$. 
			%Since the $j$-invariant of $\mathcal{C}_{p,1}\!\left(  t_{0},d_{0}\right)  $ is $\mathfrak{p}$-integral if and only if $\mathcal{C}_{p,2}\!\left(  t_{0},d_{0}\right)  $ is $\mathfrak{p}$-integral, 
			As a result, we work with the $j$-invariant of $\mathcal{C}_{p,2}\!\left(  t_{0},d_{0}\right)  $ as it has smaller coefficients and smaller exponents in the denominator. We note that $j\!\left(  \mathcal{C}_{p,i}\!\left(  t_{0},d_{0}\right)  \right)  =j_{p,i}\!\left(  t_{0}\right)$, where $j_{p,i}\!\left(  t_{0}\right)  $ is as given in Table~\ref{ta:invariants}. From Table \ref{ta:invariants}, we have that $\nu_{\mathfrak{p}}\!\left(  j_{p,2}\!\left(  t_{0}\right)  \right)  $ is as follows:
			\begin{equation}
				\scalebox{1}{
					\renewcommand{\arraystretch}{1.18}
					\renewcommand{\arraycolsep}{.3cm}
					$\begin{array}
						[c]{cc}\bottomrule
						p & \nu_{\mathfrak{p}}(j_{p,2}(t_{0}))\\\hline
						2 & 3\nu_{\mathfrak{p}}(t_{0}+16)-\nu_{\mathfrak{p}}(t_{0})\\\hline
						3 & 3\nu_{\mathfrak{p}}(t_{0}+3)+\nu_{\mathfrak{p}}(t_{0}+27)-\nu
						_{\mathfrak{p}}(t_{0})\\\hline
						5 & 3\nu_{\mathfrak{p}}(t_{0}^{2}+10t_{0}+5)-\nu_{\mathfrak{p}}(t_{0})\\\hline
						7 & 3\nu_{\mathfrak{p}}(t_{0}^{2}+5t_{0}+1)+\nu_{\mathfrak{p}}(t_{0}%
						^{2}+13t_{0}+49)-\nu_{\mathfrak{p}}(t_{0})\\\hline
						13 & 3\nu_{\mathfrak{p}}(t_{0}^{4}+7t_{0}^{3}+20t_{0}^{2}+19t_{0}%
						+1)+\nu_{\mathfrak{p}}(t_{0}^{2}+5t_{0}+13)-\nu_{\mathfrak{p}}(t_{0})\\
						\bottomrule
					\end{array}$}
				\label{valjpoly}
			\end{equation}
			
			First, assume that $E_{i}$ has potentially good reduction at $\mathfrak{p}$. We claim that $0 \leq \nup(t_0) \leq \frac{12 \nup(p)}{p-1}$. To get the lower bound, we proceed by contradiction. Suppose $\nu_{\mathfrak{p}}(t_{0})=-k$ for some positive integer $k$. Observe that for any monic polynomial $f(t)\in\mathcal{O}_{K}[t]$, $\nu_{\mathfrak{p}}(f(t_{0}))=-\deg(f(t_{0}))k$. Applying this to (\ref{valjpoly}) yields:
			\[%
			\scalebox{1}{
				\renewcommand{\arraystretch}{1.18}
				\renewcommand{\arraycolsep}{.3cm}
				$\begin{array}
					[c]{cccccc}\bottomrule
					p & 2 & 3 & 5 & 7 & 13\\\hline
					\nu_{\mathfrak{p}}(j_{p,2}(t_{0})) & -3k+k & -3k-k+k & -6k+k & -6k-2k+k &
					-12k-2k+k\\\hline
				\end{array}$}
			\]
			Hence for all $p \in \{2, 3, 5, 7, 13\}$ we get $\nu_{\mathfrak{p}}(j_{p,2}(t_{0}))=-pk<0$, which is our desired contradiction,
			as if $E_{i}$ has potentially good
			reduction at $\mathfrak{p}$, 
			then we have by Proposition \ref{prop:integralj} that
			$\nu_{\mathfrak{p}}\!\left(  j\!\left(  \mathcal{C}_{p,i}\!\left(  t_{0}%
			,d_{0}\right)  \right)  \right)  \geq0$. 
			
			We now establish the claimed upper bound. 
			% The upper bound is automatic if $\nu_{\mathfrak{p}}(t_{0})=0$. 
			So suppose that $\nu_{\mathfrak{p}}(t_{0})=k$ for $k$ a non-negative integer, and set $\nu_{\mathfrak{p}}(p)=e$. Then we have that $\nu_{\mathfrak{p}}(j_{p,2}(t_{0}))\geq\lambda_{p}$, where $\lambda_{p}$ is as given in (\ref{valj}):
			\begin{equation}%
				\scalebox{1}{
					\renewcommand{\arraystretch}{1.18}
					\renewcommand{\arraycolsep}{.4cm}
					$\begin{array}
						[c]{cc}\bottomrule
						p & \lambda_{p}\\\hline
						2 & 3\min(k,4e)-k\\\hline
						3 & 3\min(k,e)+\min(k,3e)-k\\\hline
						5 & 3\min(2k,e)-k\\\hline
						7 & \min(k,2e)-k\\\hline
						13 & \min(k,e)-k\\
						\bottomrule
					\end{array}$}
				\label{valj}%
			\end{equation}
			% \begin{equation}%
				% \scalebox{1}{
					% \renewcommand{\arraystretch}{1.18}
					% \renewcommand{\arraycolsep}{.4cm}
					% $\begin{array}
						% [c]{cc}%
						% p & \lambda_{p}\\\hline
						% 2 & 3\min(k,4e)-k\\\hline
						% 3 & 3\min(k,e)+\min(k,3e)-k\\\hline
						% 5 & 3\min(2k,e+k,e)-k\\\hline
						% 7 & \min(2k,2e)-k\\\hline
						% 13 & \min(2k,e)-k\\
						% \bottomrule
						% \end{array}$}
				% \label{valj}%
				% \end{equation}
			Note that $\nu_{\mathfrak{p}}(j_{p,2}(t_{0}))=\lambda_{p}$, except possibly, when the quantities appearing inside the minimum are equal. Now suppose that $\nu_{\mathfrak{p}}(j_{p,2}(t_{0}))>\lambda_{p}$. Then, the quantities appearing inside of the minimum of $\lambda_{p}$ are equal. In other words, we have that $k$ is equal to one of the values given in~\eqref{valuesofksec4}.
			\begin{equation}
				\scalebox{1}{
					\renewcommand{\arraystretch}{1.18}
					\renewcommand{\arraycolsep}{.3cm}
					$\begin{array}
						[c]{cccccc}\hline
						p & 2 & 3 & 5 & 7 & 13\\\hline
						k & 4e & e\text{ or }3e & \frac{e}{2} & 2e & e\\\hline
					\end{array}$}
				\label{valuesofksec4}
			\end{equation}
			In each case, we have that $k=\nu_{\mathfrak{p}}(t_{0})\leq\frac{12e}{p-1}$. Note also that in each case, $k=0$ if and only if $e=0$.
			
			Next, suppose that $\nu_{\mathfrak{p}}(j_{p,2}(t_{0}))=\lambda_{p}$. Then we
			have that $\lambda_{p}$ takes on one of the following values gives in~\eqref{valsoflampsec4}:%
			
			{\begin{equation}
					\renewcommand{\arraystretch}{1.2}
					\begin{tabular}
						[c]{cclcclccl}\hline
						$p$ & \multicolumn{8}{c}{$\text{Possible values of }\lambda_{p}$}\\\hline
						$2$ & $2k$ & $\text{ if }k\leq4e$ & $\quad$ & & & & $12e-k$ & $\text{ if }4e\leq k$ \\\hline
						$3$ & $3k$ & $\text{ if }k\leq e$ &  & $3e$ & $\text{ if }e\leq k\leq3e$ &  &
						$4e-k$ & $\text{ if }3e\leq k$\\\hline
						$5$ & $5k$ & $\text{ if }2k\leq e$ &  & & & & $3e-k$ & $\text{ if }e\leq2k$ \\\hline
						$7$ & $0$ & $\text{ if }k\leq2e$ &  & & & & $2e-k$ & $\text{ if }2e\leq k$ \\\hline
						$13$ & $0$ & $\text{ if }k\leq e$ &  & & & & $e-k$ & $\text{ if }2\leq k$ \\\hline
					\end{tabular}
					\label{valsoflampsec4}%
			\end{equation}}
			
			% {\begin{equation}
					% \renewcommand{\arraystretch}{1.2}
					% \begin{tabular}
						% [c]{cccccc}\hline
						% $p$ & \multicolumn{5}{c}{$\text{Possible values of }\lambda_{p}$}\\\hline
						% $2$ & $2k\text{ if }k\leq4e$ & $\quad$ & $12e-k\text{ if }4e\leq k$ &
						% $\quad$ & \\\hline
						% $3$ & $3k\text{ if }k\leq e$ &  & $3e\text{ if }e\leq k\leq3e$ &  &
						% $4e-k\text{ if }3e\leq k$\\\hline
						% $5$ & $5k\text{ if }2k\leq e$ &  & $3e-k\text{ if }e\leq2k$ &  & \\\hline
						% $7$ & $0\text{ if }k\leq2e$ &  & $2e-k\text{ if }2e\leq k$ &  & \\\hline
						% $13$ & $0\text{ if }k\leq e$ &  & $e-k\text{ if }2\leq k$ &  & \\\hline
						% \end{tabular}
					% \label{valsoflampsec4}%
					% \end{equation}}
			Since $E_{i}$ has potentially good reduction at $\mathfrak{p}$, we have that $\nu_{\mathfrak{p}}(j_{p,2}(t_{0}))=\lambda_{p}\geq0$. This inequality, together with \eqref{valsoflampsec4}, now shows via a case-by-case analysis that $k=\nu_{\mathfrak{p}}(t_{0})\leq\frac{12e}{p-1}$, and, in addition, $k=0$ if and only if $e=0$.
			%This argument also shows that $\nu_{\mathfrak{p}}(t_{0})=0$ whenever $\nup(p)=0$.
			
			In sum, we have established that if $E_{i}$ has potentially good reduction at $\mathfrak{p}$, then $0 \leq \nup(t_0) \leq \frac{12 \nup(p)}{p-1}$. The converse follows since the above shows that if $0 \leq \nup(t_0) \leq \frac{12 \nup(p)}{p-1}$ for some prime~$\mathfrak{p}$, then $\nu_\fp(j_{p,2}(t_0))\ge0$. Thus, $E_2$ has potentially good reduction at $\mathfrak{p}$. Since this is an isogeny invariant, it follows that $E_1$ also has potentially good reduction at $\mathfrak{p}$. This establishes the lemma, and we note that $t_{0}$ is a unit, or $t_{0}$ is only divisible by primes that divide $p$, and the valuation at these primes is bounded.
			%%%%old paragraph
			% For example, if $p = 2$ and $k = 4e$, then we have a strict inequality 
			% $\nu_{\mathfrak{p}}(j_{p,2}(t_{0}))>\lambda_{p}$. 
			% Thus if $k \leq 4e$, $\lambda_p = 3k -k = 2k \geq 0$ and if $4e \leq k$, $\lambda_p = 12e - k$ which is only greater than or equal to $0$ when $k \leq 12e = \frac{12e}{p-1}.$
			% Upon examination,
			% each equation is non-negative exactly when $k\leq\frac{12e}{p-1}$. It follows
			% that $E_{i}$ has potentially good reduction if and only if $0\leq
			% \nu_{\mathfrak{p}}(t_{0})\leq\frac{12\nu_{\mathfrak{p}}(p)}{p-1}$. 
			% In
			% particular, $t_{0}$ is a unit or $t_{0}$ is only divisible by primes that
			% divide $p$, and the valuation at these primes is bounded.
			% \alex{We should rewrite the above paragraph. Since we are working over arbitrary number fields, the inequality doesn't have to be strict if we have equality of the quantities inside the minimum.}
			% Note that if $\nup(p) = e = 0$, then each $\nup(j_{p, 2}(t_0)) = -k$, again a contradiction by Proposition~\ref{prop:integralj}.
			% Thus when $\nup(p) = 0$, $\nup(t_0) = 0$ and the bound $0 \leq \nup(t_0) \leq \frac{12 \nup(p)}{p-1}$ still holds.
			% Now we show the other direction. 
			% Assume that $t_0, d_0 \in \OK$ and $0 \leq \nup(t_0) \leq \frac{12 \nup(p)}{p-1}.$
			% Again let $\nup(t_0) = k \geq 0$ and $\nup(p) = e$.
			% Then from Table~\ref{valj} we see that $\nup(j_{p, 2})(t_0) \geq 0$ for all $p \in \{2, 3, 5, 7, 13\}$ and by Proposition~\ref{prop:integralj}, $E_2$ and hence $E_2$ both have potentially good reduction~$\fp.$ 
		\end{proof}
		% \aly{Say something about how we can now get to potentially good reduction, but need Kodaira symbols to get to discriminant twins.}
		
		Now that we have established a characterization about two $p$-isogenous elliptic curves having potentially good reduction in terms of the parametrized families $\Ci$, we are ready to begin our explicit classification of
		prime isogenous discriminant ideal twins. So suppose that $E_1$ and $E_2$ are $p$-isogenous discriminant ideal twins such that the $j$-invariants of $E_1$ and $E_2$ are not both identically $0$ or $1728$.
		The $j$-invariants can, however, be the same, just not $0$ or $1728$.
		By Lemma~\ref{lemmapotgoodred}, they have everywhere potentially good reduction. By Lemma
		\ref{lem:potentiallygood}, there are $t_{0},d_{0}\in\mathcal{O}_{K}$ with
		$0\leq\nu_{\mathfrak{p}}(t_{0})\leq\frac{12\nu_{\mathfrak{p}}(p)}{p-1}$ such
		that $E_{i}$ is $K$-isomorphic to $\mathcal{C}_{p,i}(t_{0},d_{0})$. We now show
		that the assumption that $E_{1}$ and $E_{2}$ are discriminant ideal twins
		gives a further restriction on $t_{0}$, namely that it must be a multiple of
		$\frac{12}{p-1}$. To state our result, we recall the following notation:
		if $r$ is a positive integer and $\mathcal{O}$ is a ring, then $\mathcal{O}%
		^{r}=\left\{  a^{r}:a\in\mathcal{O}\right\}  $.
		
		\begin{theorem}
			\label{PropMinDisc}Let $K$ be a number field and let $p\in\left\{
			2,3,5,7,13\right\}  $. Suppose that $E_{1}$ and $E_{2}$ are elliptic curves
			over $K$ such that their $j$-invariants are not both identically $0$ or
			$1728$. If $E_{1}$ and $E_{2}$ are $p$-isogenous discriminant ideal twins,
			then there exist $t_{0},d_{0}\in\mathcal{O}_{K}$ such that $E_{i}$ is
			$K$-isomorphic to $\mathcal{C}_{p,i}(t_{0},d_{0})$, and for each prime
			$\mathfrak{p}$ of $K$,%
			\[
			\nu_{\mathfrak{p}}(t_{0})=\frac{12}{p-1}k,\qquad0\leq k\leq\nu_{\mathfrak{p}%
			}(p).
			\]
			If further $E_{1}$ and $E_{2}$ are $p$-isogenous discriminant twins, then $t_{0}\in\mathcal{O}_K^{\frac{12}{p-1}}$.
		\end{theorem}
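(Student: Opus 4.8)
The plan is to prove the two assertions in turn: the first is a valuation computation resting on the results already established, and the second requires refining that computation from valuations to units.

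First I would invoke Lemma~\ref{lemmapotgoodred} to conclude that $p$-isogenous discriminant ideal twins have everywhere potentially good reduction, so that Lemma~\ref{lem:potentiallygood} together with Theorem~\ref{parafamilies} supplies $t_{0},d_{0}\in\mathcal{O}_{K}$ with $E_{i}$ $K$-isomorphic to $\mathcal{C}_{p,i}(t_{0},d_{0})$ and
\[
0\le\nup(t_{0})\le\frac{12\,\nup(p)}{p-1}\qquad\text{for every prime }\mathfrak{p}\text{ of }K.
\]
Now fix such a prime $\mathfrak{p}$. Since $E_{1}$ and $E_{2}$ are discriminant ideal twins, their minimal discriminant valuations at $\mathfrak{p}$ agree; and because the discriminant of an integral Weierstrass model differs from that of a $\mathfrak{p}$-minimal model by a twelfth power, this gives $\nup(\Delta_{p,1}(t_{0},d_{0}))\equiv\nup(\Delta_{p,2}(t_{0},d_{0}))\pmod{12}$. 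By \eqref{eq:Delatap12} the difference of these two valuations is $(p-1)\nup(t_{0})-12\,\nup(p)$, so $(p-1)\nup(t_{0})\equiv0\pmod{12}$. As $p-1$ divides $12$ for every $p\in\{2,3,5,7,13\}$, this forces $\tfrac{12}{p-1}\mid\nup(t_{0})$, and combined with the displayed bound we obtain $\nup(t_{0})=\tfrac{12}{p-1}k$ with $0\le k\le\nup(p)$, which is the first assertion.

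For the second assertion, suppose in addition that $E_{1}$ and $E_{2}$ are discriminant twins. Fix a prime $\mathfrak{p}$; by definition there are integral $\mathfrak{p}$-minimal models of $E_{1}$ and $E_{2}$ sharing a discriminant, and relating them back to $\mathcal{C}_{p,i}(t_{0},d_{0})$ via Lemma~\ref{newSilverman} and \eqref{eq:Delatap12} shows that $t_{0}^{\,p-1}$ is a twelfth power in $K^{\times}$; since $t_{0}^{\,p-1}$ is an algebraic integer, in fact $t_{0}^{\,p-1}\in\mathcal{O}_{K}^{12}$. When $p=2$ this reads $t_{0}\in\mathcal{O}_{K}^{12}=\mathcal{O}_{K}^{12/(p-1)}$, and when $p=13$ the assertion $t_{0}\in\mathcal{O}_{K}^{1}$ is vacuous, so these two cases are complete.

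For $p\in\{3,5,7\}$, put $m=\tfrac{12}{p-1}\in\{6,3,2\}$ and write $t_{0}^{\,p-1}=z^{12}=(z^{m})^{p-1}$ with $z\in\mathcal{O}_{K}$, so that $\eta:=t_{0}z^{-m}$ is a $(p-1)$-th root of unity; the first assertion, which gives $(t_{0})=(z)^{m}$ as ideals, shows $\eta\in\mathcal{O}_{K}^{\times}$. It remains to prove that $\eta$ is itself an $m$-th power in $\mathcal{O}_{K}^{\times}$, which would give $t_{0}\in\mathcal{O}_{K}^{m}$; this one does by a closer study of the models $\mathcal{C}_{p,i}$ at the primes dividing $p$ (and, because $m$ is divisible only by $2$ and $3$, at the primes dividing $6$), where the finer invariants of the minimal model — not merely the valuation of its discriminant but the unit it carries, subject to the Kraus-type integrality congruences — come into play, together with a local--global argument such as the Grunwald--Wang theorem used in the proof of Theorem~\ref{thmforj-_1728}. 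I expect this last step to be the main obstacle: the first assertion rests only on valuations modulo $12$, whereas here one must control units and so carry out an honest local analysis of $\mathcal{C}_{p,i}$ at the ramified primes.
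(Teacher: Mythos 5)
Your argument for the first assertion is correct and, if anything, slightly tidier than the paper's. Rather than carrying the intermediate data $\mu_{\fp}$ and $u_{i,\fp}$ through the whole first computation, you pass directly to the congruence $\nup(\Delta_{p,1}(t_0,d_0))\equiv\nup(\Delta_{p,2}(t_0,d_0))\pmod{12}$, which follows from the discriminant ideal twin hypothesis together with the fact that integral models differ from $\fp$-minimal ones by twelfth powers; combined with $\Delta_{p,1}/\Delta_{p,2}=t_0^{p-1}/p^{12}$ this gives $(p-1)\nup(t_0)\equiv 0\pmod{12}$, and since $p-1\mid 12$ this forces $\tfrac{12}{p-1}\mid\nup(t_0)$. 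The paper reaches the same conclusion via the explicit identity $t_0^{p-1}=\mu_\fp\bigl(p\,u_{1,\fp}/u_{2,\fp}\bigr)^{12}$; the two routes are essentially the same calculation.

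For the second assertion you have a genuine gap, which you candidly flag. Having established that $t_0^{p-1}=z^{12}$ for some $z\in\mathcal O_K$, you correctly reduce to showing that the residual root of unity $\eta=t_0 z^{-m}$, with $m=\tfrac{12}{p-1}$, is itself an $m$-th power, and you do not complete this for $p\in\{3,5,7\}$. Two remarks. First, $p=5$ is in fact automatic: there $\gcd(m,p-1)=\gcd(3,4)=1$, so every fourth root of unity $\eta$ is already a cube (for instance $\eta=(\eta^3)^3$ since $\eta^9=\eta$). The cases that actually require a further argument are $p\in\{3,7\}$, where $\gcd(m,p-1)>1$. Second, it is worth observing that the paper's own proof is terse at exactly this point: from $t_0^{p-1}=\bigl(p\,u_{1,\fp}/u_{2,\fp}\bigr)^{12}$ it passes immediately to $t_0=\bigl(p\,u_{1,\fp}/u_{2,\fp}\bigr)^{12/(p-1)}$ before invoking Grunwald--Wang, which is precisely the extraction of a $(p-1)$-th root in which the ambiguity you identify lives. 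So you have located a real point of delicacy; what your proposal lacks, and what is needed to match the paper's stated conclusion, is a concrete argument — for example by showing the residual root of unity can be absorbed into the choice of $\fp$-minimal model, or by the finer local analysis of the models $\mathcal{C}_{p,i}$ you propose — that disposes of $\eta$ in the cases $p\in\{3,7\}$.
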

		
		\begin{proof}
			Since $E_{1}$ and $E_{2}$ are discriminant ideal twins, we have by Lemma
			\ref{lemmapotgoodred} that they have everywhere potentially good reduction.
			The additional assumption that $E_{1}$ and $E_{2}$ are $p$-isogenous gives us
			by Lemma \ref{lem:potentiallygood} that there are $t_{0}\in K$ and $d_{0}\in
			\mathcal{O}_{K}$ such that $E_{i}$ is $K$-isomorphic to $\mathcal{C}%
			_{p,i}(t_{0},d_{0})$. Moreover, since $0\leq\nu_{\mathfrak{p}}(t_{0})\leq\frac
			{12\nu_{\mathfrak{p}}(p)}{p-1}$ for each prime $\mathfrak{p}$ we have that $t_0 \in \OK$. In particular, $\nu_{\mathfrak{p}}(t_{0})=0$ for each prime
			$\mathfrak{p}\nmid p$.
			
			Without loss of generality, we may take $E_{i}=\mathcal{C}_{p,i}(t_{0},d_{0}%
			)$, and thus $E_{i}$ is given by an integral Weierstrass model. Now let
			$\mathfrak{D}_{i}^{\text{min}}$ denote the minimal discriminant ideal of
			$E_{i}$. Then $\mathfrak{D}_{1}^{\text{min}}=\mathfrak{D}_{2}^{\text{min}}$
			since $E_{1}$ and $E_{2}$ are discriminant ideal twins.
			
			For any prime $\mathfrak{p}$, we take $E_{i,\mathfrak{p}}$ to be an $\fp$-minimal model of $E_{i}$ at $\mathfrak{p}$. Suppose further that $E_{i,\fp}$  is given by an integral Weierstrass model.
			Consequently, $\nu_{\mathfrak{p}%
			}(\Delta(E_{1,\mathfrak{p}}))=\nu_{\mathfrak{p}}(\Delta(E_{2,\mathfrak{p}}))$.
			In particular, there exists $\mu_{\mathfrak{p}}\in\mathcal{O}_{K}$ such that
			$\nu_{\mathfrak{p}}(\mu_{\mathfrak{p}})=0$ and $\Delta(E_{1,\mathfrak{p}}%
			)=\mu_{\mathfrak{p}}\Delta(E_{2,\mathfrak{p}})$. Note that if $E_{1}$ and
			$E_{2}$ are discriminant twins, then we may suppose that $\mu_\mathfrak{p}=1$. 
			From Section~\ref{sec:mindiscs}, there exists $u_{i,\mathfrak{p}}%
			\in\mathcal{O}_{K}$ such that $u_{i,\mathfrak{p}}^{-12}\Delta(E_{i})
			=\Delta(E_{i,\mathfrak{p}})$. Consequently, $u_{1,\mathfrak{p}
			}^{-12}\Delta(E_{1})=\mu_{\mathfrak{p}}u_{2,\mathfrak{p}}^{-12}\Delta(E_{2})$. By
			(\ref{eq:Delatap12}),
			\begin{equation}
				\frac{t_{0}^{p-1}}{p^{12}}=\frac{\Delta(E_{1})}{\Delta(E_{2})}=\mu
				_{\mathfrak{p}}\frac{u_{1,\mathfrak{p}}^{12}}{u_{2,\mathfrak{p}}^{12}}%
				\qquad\Longrightarrow\qquad t_{0}^{p-1}=\mu_{\mathfrak{p}}\left(
				p\frac{u_{1,\mathfrak{p}}}{u_{2,\mathfrak{p}}}\right)  ^{12}.\label{mueqn}
			\end{equation}
			In particular, for each $\mathfrak{p}|p$, we have that $\nu_{\mathfrak{p}}(t_{0})=\frac{12}{p-1}k$ for some non-negative integer $k$. Since $\nu_{\mathfrak{p}}(t_{0})=0$ for each prime $\mathfrak{p}%
			\nmid p$, we conclude that for each prime $\mathfrak{p}$ of $K$,%
			\[
			\nu_{\mathfrak{p}}(t_{0})=\frac{12}{p-1}k
			\]
			for some $k$ satisfying $0\leq k\leq\nu_{\mathfrak{p}}(p)$. We note that the upper bound is a consequence of Lemma~\ref{lem:potentiallygood}.
			
			Lastly, suppose that $E_{1}$ and $E_{2}$ are discriminant twins. Then for each
			prime $\mathfrak{p}$ of $K$, we can take $\mu_{\mathfrak{p}}=1$ in
			(\ref{mueqn}). Consequently, we have that for each $\mathfrak{p}$, we can
			write%
			\begin{equation}
				t_{0}=\left(  p\frac{u_{1,\mathfrak{p}}}{u_{2,\mathfrak{p}}}\right)
				^{\frac{12}{p-1}}.\label{eqnpower}%
			\end{equation}
			Now consider the inclusion $K\hookrightarrow K_{\mathfrak{p}}$. From
			(\ref{eqnpower}), $t_{0}\in\mathcal{O}_{K_{\mathfrak{p}}}^{\frac{12}{p-1}}$.
			We conclude by the Grunwald-Wang
			Theorem \cite{MR33801} that $t_{0}\in\mathcal{O}_K^{\frac{12}{p-1}}$.
		\end{proof}

		The converse to both Theorems \ref{PropMinDisc} and \ref{Thm:equaljinv} is true for $p\neq2$, and
		constitutes our next result, Theorem \ref{ThmClassification}.
		Specifically, for $p\in\left\{  3,5,7,13\right\}
		$, the following theorem explicitly classifies all $p$-isogenous elliptic
		curves that are discriminant ideal twins and whose $j$-invariants are not both
		identically $0$ or $1728$:
		
		% \begin{theorem}
			% \label{ThmClassification}Let $K$ be a number field and let $p\in\left\{
			% 3,5,7,13\right\}  $. Suppose that $E_{1}$ and $E_{2}$ are elliptic curves over
			% $K$ such that their $j$-invariants are not both identically $0$ or $1728$.
			% Then $E_{1}$ and $E_{2}$ are $p$-isogenous discriminant ideal twins if and
			% only if there exist $t_{0},d_{0}\in\mathcal{O}_{K}$ such that $E_{i}$ is
			% $K$-isomorphic to the elliptic curve $\mathcal{C}_{p,i}(t_{0},d_{0})$, and for
			% each prime $\mathfrak{p}$ of $K$,%
			% \begin{equation}
				% \nu_{\mathfrak{p}}(t_{0})=\frac{12}{p-1}k,\qquad0\leq k\leq\nu_{\mathfrak{p}%
					% }(p),\label{thmclasseqn}%
				% \end{equation}
			% and $\operatorname*{typ}_{\mathfrak{p}}(E_{1})=\operatorname*{typ}%
			% _{\mathfrak{p}}(E_{2})$.
			
			% Furthermore, $E_{1}$ and $E_{2}$ are discriminant twins if and only if
			% $t_{0}\in\mathcal{O}_{K}^{\frac{12}{p-1}}$.
			% \end{theorem}

		\begin{theorem}
			\label{ThmClassification}
			Let $K$ be a number field and let $p\in\left\{
			3,5,7,13\right\}  $. Suppose that $E_{1}$ and $E_{2}$ are elliptic curves over
			$K$ such that their $j$-invariants are not the same.
			Then $E_{1}$ and $E_{2}$ are $p$-isogenous discriminant ideal twins if and
			only if there exist $t_{0},d_{0}\in\mathcal{O}_{K}$ such that $E_{i}$ is
			$K$-isomorphic to the elliptic curve $\mathcal{C}_{p,i}(t_{0},d_{0})$, and for
			each prime $\mathfrak{p}$ of $K$,%
			\begin{equation}
				\nu_{\mathfrak{p}}(t_{0})=\frac{12}{p-1}k,\qquad0\leq k\leq\nu_{\mathfrak{p}%
				}(p),\label{thmclasseqn}%
			\end{equation}
			and $\operatorname*{typ}_{\mathfrak{p}}(E_{1})=\operatorname*{typ}%
			_{\mathfrak{p}}(E_{2})$.
			%\aly{Clean up the following to indicate that we still have the original restrictions on $t_0$.}\manami{I added a line. How does it look?}
			Furthermore, $E_{1}$ and $E_{2}$ are discriminant twins if and only if
			$t_{0}\in\mathcal{O}_{K}^{\frac{12}{p-1}}$ and $t_0$ satisfies \eqref{thmclasseqn} for each prime $\fp$ of $K$.
			
			If $E_{1}$ and $E_{2}$ are elliptic curves over
			$K$ such that their $j$-invariants are the same, but not $0$ or $1728$,
			then $E_1$ and $E_2$ have CM by some order $\mathcal{O}$ and the above statement holds so long as $\mathcal{O}$ is not contained in $K$.
		\end{theorem}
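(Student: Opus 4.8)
The plan is to deduce the theorem from Theorem~\ref{PropMinDisc} together with the local lemmas of Section~\ref{sec:Curvespotentialgood} and Corollary~\ref{DokLemma}. For the first biconditional, the forward implication is essentially Theorem~\ref{PropMinDisc}: since $j(E_1)\neq j(E_2)$, their $j$-invariants are not both identically $0$ or $1728$, so if $E_1$ and $E_2$ are $p$-isogenous discriminant ideal twins there are $t_0,d_0\in\OK$ with $E_i\cong_K\mathcal{C}_{p,i}(t_0,d_0)$ and $\nup(t_0)=\frac{12}{p-1}k$, $0\le k\le\nup(p)$, at every prime $\fp$. The only thing left is that $\operatorname{typ}_{\fp}(E_1)=\operatorname{typ}_{\fp}(E_2)$, which I would argue is automatic: by Lemma~\ref{lemmapotgoodred} both curves have everywhere potentially good reduction, and as discriminant ideal twins they have the same minimal discriminant valuation and the same conductor exponent at every $\fp$; hence Lemma~\ref{PotGoodRedTypes} gives equal Kodaira type at $\fp\nmid 6$, Corollary~\ref{DokLemma} gives it at every $\fp$ with $\nup(p)=0$ (in particular all $\fp\mid 2$, and all $\fp\mid 3$ when $p\neq 3$), and Lemma~\ref{KNforpeq3} gives it at $\fp\mid 3$ when $p=3$, the discriminant valuations being congruent mod $12$ because $\minD(E_1)=\minD(E_2)$.

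For the converse I would take $E_i=\mathcal{C}_{p,i}(t_0,d_0)$ with $t_0,d_0\in\OK$ satisfying \eqref{thmclasseqn}. These are $p$-isogenous by Theorem~\ref{parafamilies}, hence share a conductor, and the bound $0\le\nup(t_0)\le\frac{12\nup(p)}{p-1}$ gives everywhere potentially good reduction by Lemma~\ref{lem:potentiallygood}. At $\fp\nmid p$ one has $\nup(p)=0$, so Corollary~\ref{DokLemma} yields $\nup(\minD(E_1))=\nup(\minD(E_2))$ and equal Kodaira type. At $\fp\mid p$, identity \eqref{eq:Delatap12} gives $\nup(\Delta(E_1))-\nup(\Delta(E_2))=(p-1)\nup(t_0)-12\nup(p)\equiv 0\pmod{12}$; since $\fp\nmid 6$ when $p\in\{5,7,13\}$ and $\fp\mid 3$ when $p=3$, Lemma~\ref{PotGoodRedTypes} resp.\ Lemma~\ref{KNforpeq3} (the latter using the common conductor) forces $\nup(\minD(E_1))=\nup(\minD(E_2))$ and $\operatorname{typ}_{\fp}(E_1)=\operatorname{typ}_{\fp}(E_2)$. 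Thus $\minD(E_1)=\minD(E_2)$, the conductors agree, and $E_1\not\cong_K E_2$ because $j(E_1)\neq j(E_2)$, so $E_1$ and $E_2$ are discriminant ideal twins; in particular the Kodaira clause in the stated condition is already implied by the valuation condition, so it is harmless.

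For the discriminant twins equivalence, the forward direction is again Theorem~\ref{PropMinDisc}, as discriminant twins are discriminant ideal twins. For the converse, write $t_0=s^{12/(p-1)}$ with $s\in\OK$; then \eqref{thmclasseqn} forces $\nup(s)=0$ for $\fp\nmid p$ and $0\le\nup(s)\le\nup(p)$ for $\fp\mid p$. The key device is the curve $\widetilde{E}_1:=\mathcal{C}_{p,1}\!\bigl(t_0,\,d_0(p/s)^2\bigr)$: it is a quadratic twist of $E_1=\mathcal{C}_{p,1}(t_0,d_0)$ by the square $(p/s)^2$, hence $K$-isomorphic to $E_1$; the valuation bounds on $s$ make it globally integral; and by \eqref{eq:Delatap12} one has $\Delta(\widetilde{E}_1)=(p/s)^{12}\Delta(E_1)=\Delta\bigl(\mathcal{C}_{p,2}(t_0,d_0)\bigr)=\Delta(E_2)$ exactly. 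By the first part $E_1$ and $E_2$ are discriminant ideal twins, so at each $\fp$ the curves $\widetilde{E}_1$ and $E_2$ have equal $\nup(\Delta)$ and equal minimal discriminant valuation; minimizing both at $\fp$ over $\OK$ by a change of variables with leading coefficient $\pi_{\fp}^{c_{\fp}}$, where $c_{\fp}$ is the common number of reduction steps and $\pi_{\fp}$ the distinguished uniformizer of Section~\ref{sec:mindiscs} (Lemma~\ref{newSilverman}), produces $\fp$-minimal integral models of $E_1$ and $E_2$ with common discriminant $\pi_{\fp}^{-12c_{\fp}}\Delta(E_2)$. Hence $E_1$ and $E_2$ are discriminant twins.

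Finally, when $j(E_1)=j(E_2)\notin\{0,1728\}$, Lemma~\ref{SameJCM} shows that two non-$K$-isomorphic isogenous curves with this common $j$-invariant have CM by a common order $\mathcal{O}$ and become $K$-isomorphic precisely over fields containing $\mathcal{O}$, so assuming $\mathcal{O}\not\subset K$ keeps them non-$K$-isomorphic. Every result invoked above (Theorems~\ref{parafamilies}, \ref{PropMinDisc}; Lemmas~\ref{lemmapotgoodred}, \ref{lem:potentiallygood}; the local lemmas) needs only that the $j$-invariants not both be identically $0$ or $1728$, and the hypothesis $j(E_1)\neq j(E_2)$ was used solely to get $E_1\not\cong_K E_2$; so the whole argument carries over verbatim with ``$j(E_1)\neq j(E_2)$'' replaced by ``$\mathcal{O}\not\subset K$'', which in particular supplies the converse left open in Theorem~\ref{Thm:equaljinv}. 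The step I expect to be the genuine obstacle is the converse for discriminant twins: one must produce, simultaneously at every prime, $\fp$-minimal models \emph{over $\OK$} (not merely over $\OKp$) whose discriminants are equal as elements, not just in valuation; the twist $\widetilde{E}_1$ arranges exact equality of the two global discriminants, after which one must still check that the minimalizing change of variables at $\fp$ can be taken over $\OK$ with a prescribed leading coefficient, which has to be teased out of Lemma~\ref{newSilverman} and the behaviour of Tate's algorithm over $\OKp$.
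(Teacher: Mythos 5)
Your argument for the first biconditional is essentially the paper's: the forward implication is Theorem~\ref{PropMinDisc}, and the valuation-condition converse proceeds by showing $\nup(\Delta(E_1))-\nup(\Delta(E_2))\equiv 0\pmod{12}$ and then invoking Lemma~\ref{PotGoodRedTypes} (for $\fp\nmid 6$, i.e.\ $p\in\{5,7,13\}$), Lemma~\ref{KNforpeq3} (for $\fp\mid 3$, $p=3$), and Corollary~\ref{DokLemma} (for $\fp\nmid p$). Your computation of the difference as $(p-1)\nup(t_0)-12\nup(p)$ directly from \eqref{eq:Delatap12} is a cleaner route to the same $12\cdot(\text{integer})$ that the paper extracts from its explicit valuation table \eqref{valuations}, but the argument is logically the same. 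Your observation that the $\operatorname{typ}_\fp$ clause in the forward direction is then automatic matches the paper's remark that it ``follows as a consequence of the converse.'' The closing CM paragraph is also the paper's argument (via Lemma~\ref{SameJCM} and Theorem~\ref{Thm:equaljinv}).

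Where you genuinely depart from the paper is the discriminant-twin converse. The paper works prime-by-prime: it lets $\mu_\fp=\Delta(E_{1,\fp})/\Delta(E_{2,\fp})$ for the $\fp$-minimal models, uses \eqref{eq:Delatap12} and Lemma~\ref{newSilverman} to write $\mu_\fp=t_0^{p-1}\bigl(u_{2,\fp}/(p\,u_{1,\fp})\bigr)^{12}$, concludes $\mu_\fp=\kappa_\fp^{12}$ with $\nup(\kappa_\fp)=0$, and rescales $E_{1,\fp}$ by $[\kappa_\fp,0,0,0]$. You instead build, once and globally, the twist $\widetilde{E}_1=\mathcal{C}_{p,1}\!\bigl(t_0,d_0(p/s)^2\bigr)$ with $t_0=s^{12/(p-1)}$: the bound $\nup(s)\le\nup(p)$ guarantees $p/s\in\OK$, the curve is $K$-isomorphic to $E_1$ via $[s/p,0,0,0]$, and \eqref{eq:Delatap12} gives $\Delta(\widetilde E_1)=\Delta(E_2)$ \emph{exactly}. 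This packages the whole $12$th-power phenomenon into a single global integral model, and it is an attractive alternative to the paper's local $\kappa_\fp$ bookkeeping. Both routes, however, end at the same delicate point, which you correctly flag: one must still produce, at each $\fp$, $\fp$-minimal integral Weierstrass models \emph{over $\OK$} with literally equal discriminants, which means choosing the minimalizing change of variables so that (i) its entries lie in $\OK$, (ii) it does not spoil $\OK$-integrality at primes $\fq\ne\fp$, and (iii) the two curves get the same leading coefficient. Your ``take $u=\pi_\fp^{c_\fp}$ for both'' and the paper's ``post-compose with $[\kappa_\fp,0,0,0]$'' are the same kind of step, and neither obviously preserves global integrality of the resulting coefficients without a little more argument (the relevant freedom is in the non-uniqueness of Lemma~\ref{newSilverman}'s transformation). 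So your proof is not only essentially correct but also slightly cleaner in its global organization; the one place where it is not self-contained is shared with the original.
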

		
		\begin{proof}
			We note that most of the forward direction is a consequence of Theorem
			\ref{PropMinDisc}. In particular, it remains to show that $\operatorname*{typ}%
			_{\mathfrak{p}}(E_{1})=\operatorname*{typ}_{\mathfrak{p}}(E_{2})$ for each
			prime $\mathfrak{p}$ of $K$. This will follow as a consequence of the
			converse, which we consider below.
			
			Let $t_{0},d_{0}\in\mathcal{O}_{K}$ such that $E_{i}=\mathcal{C}_{p,i}%
			(t_{0},d_{0})$ is an elliptic curve, and suppose further that for each prime
			$\mathfrak{p}$ of $K$, (\ref{thmclasseqn}) holds. By Lemma
			\ref{lem:potentiallygood}, we deduce that $E_{1}$ and $E_{2}$ have everywhere
			potentially good reduction. Now let $\mathfrak{D}_{i}^{\text{min}}$ denote the
			minimal discriminant of $E_{i}$. By Corollary \ref{DokLemma}, we have that for
			each prime $\mathfrak{p}\nmid p$, it is the case that $\nu_{\mathfrak{p}%
			}(\mathfrak{D}_{1}^{\text{min}})=\nu_{\mathfrak{p}}(\mathfrak{D}%
			_{2}^{\text{min}})$ and $\operatorname*{typ}_{\mathfrak{p}}(E_{1}%
			)=\operatorname*{typ}_{\mathfrak{p}}(E_{2})$. So it remains to consider the
			case when $\mathfrak{p}$ is a prime of $K$ such that $\mathfrak{p}|p$. 
			
			We now show that if $\mathfrak{p}|p$, then $\nu_{\mathfrak{p}}(\Delta
			(E_{1}))\equiv\nu_{\mathfrak{p}}(\Delta(E_{2}))\ \operatorname{mod}12$. This
			will allow us to invoke Lemmas \ref{KNforpneq3} and \ref{KNforpeq3}, which
			will then allow us to conclude that $E_{1}$ and $E_{2}$ are discriminant ideal
			twins, and for each prime $\mathfrak{p}$ of $K$, $\operatorname*{typ}%
			_{\mathfrak{p}}(E_{1})=\operatorname*{typ}_{\mathfrak{p}}(E_{2})$. 
			Since
			$\nu_{\mathfrak{p}}(t_{0})=\frac{12}{p-1}k$, we have that%
			\[
			\scalebox{1}{
				\renewcommand{\arraystretch}{1.05}
				\renewcommand{\arraycolsep}{.3cm}
				$\begin{array}
					[c]{ccccc}\bottomrule
					p & 3 & 5 & 7 & 13\\\hline
					\nu_{\mathfrak{p}}(t_{0}) & 6k & 3k & 2k & k\\\hline
				\end{array}$}
			\]
			with $k\leq\nu_{\mathfrak{p}}(p)$. For each $p$, we then have by inspection of
			Table \ref{ta:invariants} that $\nu_{\mathfrak{p}}(\Delta(E_{i}))$ is given
			by:
			\begin{equation}%
				\scalebox{1}{
					\renewcommand{\arraystretch}{1.18}
					\renewcommand{\arraycolsep}{.3cm}
					$\begin{array}
						[c]{ccc}\bottomrule
						p & i & \nu_{\mathfrak{p}}(\Delta(E_{i}))\\\hline
						3 & 1 & 18k + 2\nup(27 + t_0) + 6\nup(d_0)\\\cmidrule(lr){2-3}
						& 2 & 6k + 2\nup(27 + t_0) + 6\nup(d_0) \\\hline
						5 & 1 & 15k + 3\nup(125 + 22t_0 + t_0^2) + 6\nup(d_0) \\\cmidrule(lr){2-3}
						& 2 & 3k + 3\nup(125 + 22t_0 + t_0^2) + 6\nup(d_0) \\\hline
						7 & 1 & 14k + 2\nup(49 + 13t_0 + t_0^2) + 6\nup(d_0)\\\cmidrule(lr){2-3}
						& 2 &  2k + 2\nup(49 + 13t_0 + t_0^2)  + 6\nup(d_0)\\\hline
						13 & 1 & 13k + \nup((13 + 5t_0 + t_0^2)(13 + 6t_0 + t_0^2))  + 6\nup(d_0) \\\cmidrule(lr){2-3}
						& 2 & k + \nup((13 + 5t_0 + t_0^2)(13 + 6t_0 + t_0^2))  + 6\nup(d_0)\\
						\bottomrule
					\end{array}$}
				\label{valuations}%
			\end{equation}
			Observe that for each $p$, we have that $\nu_{\mathfrak{p}}(\Delta(E_{1}%
			))-\nu_{\mathfrak{p}}(\Delta(E_{2}))=12k$. Therefore $\nu_{\mathfrak{p}%
			}(\Delta(E_{1}))\equiv\nu_{\mathfrak{p}}(\Delta(E_{2}))\ \operatorname{mod}%
			12$. By Lemmas~\ref{KNforpneq3} and~\ref{KNforpeq3}, we conclude that
			$\nu_{\mathfrak{p}}(\mathfrak{D}_{1}^{\text{min}})=\nu_{\mathfrak{p}%
			}(\mathfrak{D}_{2}^{\text{min}})$ and $\operatorname*{typ}_{\mathfrak{p}%
			}(E_{1})=\operatorname*{typ}_{\mathfrak{p}}(E_{2})$ if $\mathfrak{p}\nmid2$. Therefore $E_{1}$ and $E_{2}$ are discriminant ideal
			twins and $\operatorname*{typ}_{\mathfrak{p}}(E_{1})=\operatorname*{typ}%
			_{\mathfrak{p}}(E_{2})$ for each prime $\mathfrak{p}$ of $K$.
			
			%\aly{Clean up the following a bit.  Make it clear where $t_0$ is.}
			It remains to show that if $t_{0}\in\mathcal{O}_{K}^{\frac{12}{p-1}}$, then
			$E_{1}$ and $E_{2}$ are discriminant twins. 
			To this end, for a prime
			$\mathfrak{p}$ of $K$, 
			let $E_{i,\mathfrak{p}}$ be a $\mathfrak{p}$-minimal model of
			$E_{i}$. 
			By Lemma~\ref{newSilverman}, there
			exists $u_{i,\mathfrak{p}}\in\mathcal{O}_{K}$ such that $u_{i,\mathfrak{p}}^{-12}\Delta(E_{i})
			=\Delta(E_{i,\mathfrak{p}})$. Moreover, since $E_{1}$
			and $E_{2}$ are discriminant ideal twins it is the case that there is a
			$\mu_{\mathfrak{p}}\in\mathcal{O}_{K}$ such that $\nu_{\mathfrak{p}}
			(\mu_{\mathfrak{p}})=0$ and $\Delta(E_{1,\mathfrak{p}})=\mu_{\mathfrak{p}
			}\Delta(E_{2,\mathfrak{p}})$. By (\ref{eq:Delatap12}),
			\[
			\frac{t_{0}^{p-1}}{p^{12}}=\frac{\Delta(E_{1})}{\Delta(E_{2})}=\mu
			_{\mathfrak{p}}\frac{u_{1,\mathfrak{p}}^{12}}{u_{2,\mathfrak{p}}^{12}}%
			\qquad\Longrightarrow\qquad\mu_{\mathfrak{p}}=t_{0}^{p-1}\left(
			\frac{u_{2,\mathfrak{p}}}{pu_{1,\mathfrak{p}}}\right)  ^{12}.
			\]
			By assumption, $t_{0}^{p-1}\in\mathcal{O}_{K}^{12}$ and thus $\mu
			_{\mathfrak{p}}\in\mathcal{O}_{K}^{12}$. In particular, there is a
			$\kappa_{\mathfrak{p}}\in\mathcal{O}_{K}$ such that $\kappa_{\mathfrak{p}%
			}^{12}=\mu_{\mathfrak{p}}$ and $\nu_{\mathfrak{p}}(\kappa_{\mathfrak{p}})=0$.
			Now let $E_{1,\mathfrak{p}}^{\prime}$ be the elliptic curve obtained from
			$E_{1,\mathfrak{p}}$ via the isomorphism $[\kappa_\fp,0,0,0]$. Then $E_{1,\mathfrak{p}}$ is an $\mathfrak{p}$-minimal
			model  with $\Delta(E_{1,\mathfrak{p}}^{\prime}%
			)=\kappa_{\mathfrak{p}}^{-12}\Delta(E_{1,\mathfrak{p}})$. Hence $\Delta
			(E_{1,\mathfrak{p}})=\mu_{\mathfrak{p}}\Delta(E_{1,\mathfrak{p}}^{\prime})$,
			which, in turn, yields that $\Delta(E_{1,\mathfrak{p}}^{\prime})=\Delta
			(E_{2,\mathfrak{p}})$. This shows that for each $\mathfrak{p}$, there are
			$\mathfrak{p}$-minimal models of $E_{1}$ and $E_{2}$ having equal
			discriminants. Therefore $E_{1}$ and $E_{2}$ are discriminant twins.
			
			Note that the above holds as long as $E_1$ and $E_2$ do not both have $j$-invariant $0$ or $1728$.
			The curves could, however, be isomorphic and thus not discriminant (ideal) twins.
			If the curves are isomorphic over $K$, then they have the same $j$-invariant.
			In Theorem \ref{Thm:equaljinv} we give the values of $t_0$ that produce equal $j$-invariants.
			These curves will be $K$-isomorphic if and only if their CM order is contained in $K$ by Lemma \ref{SameJCM}.
		\end{proof}
		
		\begin{remark}\label{constructivermk}
			Theorem \ref{ThmClassification} is in fact constructive. For example, if $K$ has class number 1, then let $u_0, u_1,...,u_n \in \mathcal{O}_K^{\times}$ generate the unit group and for each $\fp_j \mid p$, 
			with $p \in \{2, 3, 5, 7, 13\}$, let $\pi_j$ generate $\fp_j$. Then for any $t_0 = \prod u_i^{m_i} \prod \pi_j^{12k_j/(p-1)}$ for $0 \leq k_j \leq \nup(\fp_j)$ and $m_i \in \mathbb{Z}$, the curves $\mathcal{C}_{p, 1}(t_0, d_0)$ and $\mathcal{C}_{p, 2}(t_0, d_0)$ are $p$-isogenous discriminant ideal twins if they are not $K$-isomorphic. If further each $m_i = 12 m_i'/(p-1)$ for $m_i' \in \mathbb{Z}$, then  $\mathcal{C}_{p, 1}(t_0, d_0)$ and $\mathcal{C}_{p, 2}(t_0, d_0)$ are discriminant twins, again, as long as they are not $K$-isomorphic.
			For $p \neq 2$, the finitely many potentially $K$-isomorphic cases are given in Theorem \ref{Thm:equaljinv}.
		\end{remark}
		
		The following example shows that the if and only if statement in Theorem~\ref{ThmClassification} fails for $p=2$.
		\begin{example}
			\label{ex:p2}
			Let $K = \mathbb{Q}(\sqrt[5]{2})$ and $a = \sqrt[5]{2}$. 
			This number field has class number one and unit group isomorphic to $C_2 \times \mathbb{Z} \times \mathbb{Z}$ with generators $u_0 = -1$,  $u_1 = a^3 + a^2 - 1, u_2 = a -1$.
			Take $(a) = \fp \mid 2$, then $\nup(2) = 5$.
			Theorem~\ref{ThmClassification} refers to $t_0 \in \OK$ with $\nup(t_0) = \frac{12k}{p-1} = 12k$ for some integer $k$ satisfying $0\leq k \leq \nup(2) = 5$. 
			Then $t_0$ is of the form $t_0 = \pm u_1^{k_1} u_2^{k_2} a^{12k}$ for $k_1, k_2 \in \mathbb{Z}$ and $0 \leq k \leq 5$.
			
			For $k = 2$, take $t_0 = a^{24} = 16a^2$ and while the elliptic curves $C_{2, 1}(t_0, 1)$ and $C_{2, 2}(t_0, 1)$ have the same minimal discriminants, their Kodaira-N\'{e}ron type are ${\rm I}_4^*$ and ${\rm II}^*$ respectively. 
			
			For $k = 3$, take $t_0 = a^{36} = 128a$. Now these curves $C_{2, 1}(t_0, 1)$ and $C_{2, 2}(t_0, 1)$ have Kodaira-N\'{e}ron type ${\rm I}_{4}^*$ and ${\rm I}_{16}^*$, respectively, and discriminants  $3072a^4 + 1536a^3 + 256a^2 + 4096$ and $512a^3 + 768a^2 + 384a + 64$ respectively with valuation at $\fp$ given by $30$ and $42$. 
		\end{example}
		
		Building on Remark~\ref{constructivermk}, we have the following proposition.
		
		\begin{theorem} \label{InflyDITs}
			Let $K$ be a number field with an infinite unit group. Then, up to twist, there are infinitely many prime isogenous discriminant (ideal) twins for $p \in \{3, 5, 7, 13\}$.
		\end{theorem}
		
		\begin{proof}
			Let $r$ and $s$ denote the number of real and complex embeddings,
			respectively, of $K$ into $\mathbb{C}$. 
			Then $\left[  K:\mathbb{Q}\right]  =r+2s$, and by Dirichlet's Unit Theorem,
			$\mathcal{O}_{K}^{\times}\cong\mu\!\left(  K\right)  \times\mathbb{Z}^{r+s-1}%
			$. By assumption, $r+s>1$. In particular, $\mathcal{O}_{K}$ has a fundamental
			units $u_1,...,u_{r+s-1}$. 
			By Theorem~\ref{ThmClassification}, we have that for each product $t_0 = \prod_{i = 1}^{r+s-1} u_i^{m_i}$ where $m_i \in \mathbb{Z}$, 
			and for each $p \in \{3, 5, 7, 13\}$, the curves
			$\mathcal{C}_{p,1}(t_0,1)$ and $\mathcal{C}%
			_{p,2}(t_0,1)$ are $p$-isogenous discriminant ideal twins,
			except for the finitely many exceptions given in Theorem \ref{Thm:equaljinv}.
			For a fixed $t_0$, set
			\[
			j_{p,2}(t_0)=j\!\left(  \mathcal{C}_{p,2}(t_0,1)\right)  =\frac{f_p(t_0) }%
			{t_0}
			\]
			where $f_p(t_0)$ is the numerator of $j\!\left(  \mathcal{C}_{p,2}(t_0,1)\right)$ and a degree $p+1$  polynomial in $t_0.$
			Then $t_0$ is a root of the degree $p + 1$ polynomial%
			\begin{equation}\label{polyinf}
				f_p(X)  -j_{p, 2}(t_0) X.
			\end{equation}
			% Since this polynomial has at most $p+1$ roots in $K$, we have that there are
			% infinitely many $t_0'$ for which $t_0'$ is not a root
			% of \eqref{polyinf}.  
			% \aly{I commented out a line since I don't think we need it for the proof.}
			Thus \eqref{polyinf} is a degree $p+1$ map from $K$ to $K$.
			As $\mathcal{O}_K^{\times} \cong\mu\!\left(  K\right)  \times\mathbb{Z}^{r+s-1}$,
			\eqref{polyinf} restricted to $\mathcal{O}_K^{\times}$ still has infinite image.
			This shows that, up to twist, there are infinitely many $p$-isogenous discriminant ideal twins.
			
			Further, to get infinitely many discriminant twins, take $t_0\in (\OK^\times)^{\frac{12}{p-1}}$.
		\end{proof}

		\section{Explicit results over \texorpdfstring{$\Q$}{Q} and imaginary quadratic fields\label{sec:Examples}}
		
		In \cite{Deines2018}, Deines classified all semistable discriminant twins over
		$\mathbb{Q}$. The article also showed that isogeny classes of size two with at
		least one prime with multiplicative reduction could not have discriminant twins.
		Moreover, up to $j$-invariant, there were only finitely many such discriminant twins.
		This differs from the general case of number fields with an infinite unit group, as seen in Theorem~\ref{InflyDITs}.
		Motivated by these results and as an application of our main theorems, we now give a classification of all prime isogenous discriminant ideal twins over number fields with a finite unit group. 
		We begin with the case of rational elliptic curves.
		
		\begin{proposition}
			\label{classinQ}Up to twist, there are finitely many $p$-isogenous
			discriminant ideal twins over~$\mathbb{Q}$. These elliptic curves, $E_{1}$ and $E_{2}$, are given in Table~\ref{ta:clasoverQ}
			by their LMFDB label. The table also lists their $j$-invariant and minimal discriminant.
		\end{proposition}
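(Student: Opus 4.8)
The plan is to specialize the classification theorems of the preceding sections to $K=\mathbb{Q}$, where $\mathcal{O}_K=\mathbb{Z}$ has class number one and unit group $\{\pm1\}$, and where $p\mathbb{Z}$ is never a square ideal for $p\in\{2,3,5,7,13\}$. I would organize the argument according to the $j$-invariants of the pair $E_1,E_2$.

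First, suppose $j(E_1)=j(E_2)\in\{0,1728\}$. Theorem~\ref{mainthm2} (equivalently Theorem~\ref{thmforj-_1728}) requires $p\mathbb{Z}$ to be a square ideal for such a pair to be discriminant ideal twins, which is impossible over $\mathbb{Q}$, so this case is vacuous. Next, suppose $j(E_1)=j(E_2)\notin\{0,1728\}$. By Theorem~\ref{Thm:equaljinv}, a $p$-isogenous discriminant ideal twin pair with equal $j$-invariant forces either $p\in\{5,13\}$ with $t_0$ a root of $t^2-125$ or $t^2-13$ --- impossible since $\sqrt{125},\sqrt{13}\notin\mathbb{Q}$ --- or $p\in\{2,3,7\}$ with $p\mathbb{Z}$ a square, again impossible. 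Hence over $\mathbb{Q}$ every $p$-isogenous discriminant ideal twin pair has $j(E_1)\neq j(E_2)$.

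It remains to treat $j(E_1)\neq j(E_2)$. For $p\in\{3,5,7,13\}$ (all odd), Theorem~\ref{ThmClassification} gives that $E_1,E_2$ are $p$-isogenous discriminant ideal twins precisely when $E_i$ is $\mathbb{Q}$-isomorphic to $\mathcal{C}_{p,i}(t_0,d_0)$ for some $t_0,d_0\in\mathbb{Z}$ with $\nup(t_0)=\frac{12k}{p-1}$ and $0\le k\le\nup(p)$ at every prime $\mathfrak{p}$ (the Kodaira--N\'{e}ron condition in that theorem being automatic for odd $p$). Over $\mathbb{Q}$ this forces $\nu_q(t_0)=0$ for $q\neq p$ and $\nu_p(t_0)\in\{0,\frac{12}{p-1}\}$, so $t_0\in\{\pm1,\pm p^{12/(p-1)}\}$; since $d_0$ is only a twisting parameter we may normalize $d_0=1$. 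Using the formulas of Table~\ref{ta:invariants} I would check that each such $t_0$ yields non-singular curves with distinct $j$-invariants, hence genuinely non-$\mathbb{Q}$-isomorphic discriminant ideal twins, and that the substitutions in Remark~\ref{SameJ} identify $t_0=\varepsilon$ with $t_0=\varepsilon p^{12/(p-1)}$ for $\varepsilon=\pm1$, leaving only finitely many genuinely distinct pairs. For $p=2$, Theorem~\ref{PropMinDisc} yields only the necessary condition $t_0\in\{\pm1,\pm2^{12}\}$; because the converse is false for $p=2$ (cf.\ Example~\ref{ex:p2}), I would then run Tate's algorithm on the curves $\mathcal{C}_{2,1}(t_0,1)$ and $\mathcal{C}_{2,2}(t_0,1)$ for these finitely many $t_0$, using the code of \cite{GitHubDIT}, to decide which candidates actually share minimal discriminant ideals and conductors.

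Finally, for each surviving pair I would pass to a global minimal model (which exists since $\mathbb{Q}$ has class number one), read off its minimal discriminant and LMFDB label, and record these in Table~\ref{ta:clasoverQ}; the finiteness assertion is then immediate, since only finitely many $(p,t_0)$ survive. The main obstacle is the $p=2$ case: there the congruence criterion of Theorem~\ref{ThmClassification} is merely necessary, so the classification must be completed by an explicit computation of Kodaira--N\'{e}ron types, and hence of minimal discriminant valuations, at the prime $2$ for the handful of candidate curves, rather than by appeal to a uniform statement.
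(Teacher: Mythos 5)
Your proposal addresses only the primes $p\in\{2,3,5,7,13\}$ for which $X_0(p)$ has genus zero, but the proposition concerns \emph{all} primes $p$. By Mazur's theorem, a rational elliptic curve can admit a $p$-isogeny over $\mathbb{Q}$ for any $p\in\{2,3,5,7,11,13,17,19,37,43,67,163\}$, and the table you are asked to reproduce contains an entry at $p=37$ (the isogeny class \texttt{1225.b}, which is in fact a pair of discriminant \emph{twins}, not just discriminant ideal twins). Your argument, which begins and ends with specialization of the parameterizations $\mathcal{C}_{p,i}(t,d)$, is silent on the seven primes $p\in\{11,17,19,37,43,67,163\}$ for which those parameterizations do not exist. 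The paper handles these separately: it uses \cite[Lemma 5.1]{Bariso} to enumerate, up to quadratic twist, the eight isogeny classes arising from non-cuspidal rational points of $X_0(p)$ for these $p$, then rules out $p=17$ (non-integral $j$-invariants, so Lemma~\ref{lemmapotgoodred} fails), rules out $p\in\{11,19,43,67,163\}$ by observing that the $p$-adic valuations of the two minimal discriminants differ by $6\not\equiv 0\bmod{12}$ so Lemma~\ref{KNforpneq3} obstructs the twin condition for any quadratic twist, and finally verifies that \texttt{1225.b} does give twins. Without this analysis your proof simply does not cover the stated proposition.

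Apart from that omission, the portion of your argument that treats the genus-zero primes is sound and matches the paper's reasoning: the equal-$j$-invariant cases are dispatched by Theorems~\ref{thmforj-_1728} and \ref{Thm:equaljinv} since $p\mathbb{Z}$ is never a square ideal, the parameter $t_0$ is pinned down to $\{\pm1,\pm p^{12/(p-1)}\}$ by Theorem~\ref{PropMinDisc}, and Remark~\ref{SameJ} collapses this to $t_0=\pm1$ up to twist. Your explicit caveat that for $p=2$ one needs to run Tate's algorithm, since the congruence criterion is only necessary and Lemma~\ref{KNforpeq3} has no analogue above $2$, is correct and is something the paper's proof glosses over, deferring implicitly to computer verification. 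But this refinement does not compensate for the missing treatment of the positive-genus primes, which is an essential part of the proposition.
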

		
		{\renewcommand*{\arraystretch}{1.24}
			\begin{longtable}[c]{ccccccc}
				\caption{Prime isogenous discriminant ideal twins over $\Q$}\\
				\hline
				$p$ & $E_{1}$ & $E_{2}$ & $j\!\left(  E_{1}\right)  $ & $j\!\left(
				E_{2}\right)  $ & $\Delta_{E_{1}}^{\text{min}}$ & $\Delta_{E_{2}}^{\text{min}}$ \\
				\hline
				\endfirsthead
				\caption[]{\emph{continued}}\\
				\hline
				$p$ & $E_{1}$ & $E_{2}$ & $j\!\left(  E_{1}\right)  $ & $j\!\left(
				E_{2}\right)  $ & $\Delta_{E_{1}}^{\text{min}}$ & $\Delta_{E_{2}}^{\text{min}}$\\
				\hline
				\endhead
				\hline
				\multicolumn{6}{r}{\emph{continued on next page}}
				\endfoot
				\hline
				\endlastfoot
				
				$2$ & $\href{http://www.lmfdb.org/EllipticCurve/Q/4225.h1/}{4225.h1}$ & $\href{http://www.lmfdb.org/EllipticCurve/Q/4225.h2/}{4225.h2}$ & $16974593$ & $4913$ & $274625$ & $274625$\\\cmidrule(lr){2-7}
				& $\href{http://www.lmfdb.org/EllipticCurve/Q/49.a3/}{49.a3}$ & $\href{http://www.lmfdb.org/EllipticCurve/Q/49.a4/}{49.a4}$ & $16581375$ & $-3375$ & $343$ & $-343$\\\hline
				$3$ & $\href{http://www.lmfdb.org/EllipticCurve/Q/196.a1/}{196.a1}$ & $\href{http://www.lmfdb.org/EllipticCurve/Q/196.a2/}{196.a2}$ & $406749952$ & $1792$ & $784$ & $784$\\\cmidrule(lr){2-7}
				& $\href{http://www.lmfdb.org/EllipticCurve/Q/676.b1/}{676.b1}$ & $\href{http://www.lmfdb.org/EllipticCurve/Q/676.b2/}{676.b2}$ & $-368484688$ & $-208$ & $-43264$ & $-43264$\\\hline
				$5$ & $\href{http://www.lmfdb.org/EllipticCurve/Q/1369.f1/}{1369.f1}$ & $\href{http://www.lmfdb.org/EllipticCurve/Q/1369.f2/}{1369.f2}$ & $38477541376$ & $4096$ & $50653$ & $50653$\\\cmidrule(lr){2-7}
				& $\href{http://www.lmfdb.org/EllipticCurve/Q/43264.f1/}{43264.f1}$ & $\href{http://www.lmfdb.org/EllipticCurve/Q/43264.f2/}{43264.f2}$ & $-23788477376$ & $64$ & $-1124864$ & $-1124864$\\\hline
				$7$ & $\href{http://www.lmfdb.org/EllipticCurve/Q/3969.f1/}{3969.f1}$ & $\href{http://www.lmfdb.org/EllipticCurve/Q/3969.f2/}{3969.f2}$ & $1168429123449$ & $21609$ & $3969$ & $3969$\\\cmidrule(lr){2-7}
				& $\href{http://www.lmfdb.org/EllipticCurve/Q/1369.b1/}{1369.b1}$ & $\href{http://www.lmfdb.org/EllipticCurve/Q/1369.b2/}{1369.b2}$ & $-371323264041$ & $999$ & $-1369$ & $-1369$\\\hline
				$13$ & $\href{http://www.lmfdb.org/EllipticCurve/Q/9025.a1/}{9025.a1}$ & $\href{http://www.lmfdb.org/EllipticCurve/Q/9025.a2/}{9025.a2}$ & $2045023375454208$ & $2101248$ & $45125$ &
				$45125$\\\cmidrule(lr){2-7}
				& $\href{http://www.lmfdb.org/EllipticCurve/Q/20736.n1/}{20736.n1}$ & $\href{http://www.lmfdb.org/EllipticCurve/Q/20736.n2/}{20736.n2}$ & $-39091613782464$ & $576$ & $-41472$ & $-41472$\\\hline
				$37$ & $\href{http://www.lmfdb.org/EllipticCurve/Q/1225.b1/}{1225.b1}$ & $\href{http://www.lmfdb.org/EllipticCurve/Q/1225.b2/}{1225.b2}$ & $-162677523113838677$ & $-9317$ & $-6125$ &
				$-6125$
				\label{ta:clasoverQ}	
		\end{longtable}}

		\begin{proof}
			Let $p$ be a prime and suppose that $X_{0}(p)$ has positive genus and a
			non-cuspidal $\mathbb{Q}$-rational point. Then $p \in \{11,17,19,37,43,67,163 \}$ \cite{MazurRatIsogPrimeDeg}. Up to quadratic twist, these non-cuspidal points determine
			$8$ distinct isogeny classes given below by their LMFDB label \cite[Lemma 5.1]{Bariso}.
			\begin{equation}\label{isogen1}
				\begin{tabular}
					[c]{cccccccc}\bottomrule
					$n$ & $11$& $17$ & $19$ & $37$ & $43$ & $67$ & $163$ \\\hline
					Isogeny class & \href{https://www.lmfdb.org/EllipticCurve/Q/121/a/}{121.a},\href{https://www.lmfdb.org/EllipticCurve/Q/121/b/}{121.b} & \href{https://www.lmfdb.org/EllipticCurve/Q/14450/b/}{14450.b} & \href{https://www.lmfdb.org/EllipticCurve/Q/361/a/}{361.a} & \href{https://www.lmfdb.org/EllipticCurve/Q/1225/b/}{1225.b} & \href{https://www.lmfdb.org/EllipticCurve/Q/1849/b/}{1849.b} & \href{https://www.lmfdb.org/EllipticCurve/Q/4489/b/}{4489.b} & \href{https://www.lmfdb.org/EllipticCurve/Q/26569/a/}{26569.a}\\\hline
				\end{tabular}
			\end{equation}
			
			The isogeny class \href{https://www.lmfdb.org/EllipticCurve/Q/1225/b/}{1225.b} consisting of $37$-isogenous elliptic curves has discriminant twins with j-invariants and discriminants as given in Table \ref{ta:clasoverQ}. 
			
			Next, the $17$-isogenous elliptic curves in the isogeny class \href{https://www.lmfdb.org/EllipticCurve/Q/14450/b/}{14450.b} have non-integral $j$-invariants, $-882216989/131072$ and $-297756989/2$. 
			Therefore, none of their quadratic twists will result in discriminant ideal twins by Lemma~\ref{lemmapotgoodred}. 
			
			We now claim that for  $p\in \{11,19,43,67,163\}$, there are no $p$-isogenous discriminant ideal twins.
			To this end, observe that the $j$-invariants of these elliptic curves are integral; see Table~\ref{ta:JDiscCM}. For these elliptic curves, the table also gives the $p$-adic valuation of their minimal discriminants. In particular, the $p$-adic valuations of the discriminants of the two elliptic curves are not congruent modulo $12$. Consequently, the $p$-adic valuations of the discriminant of each quadratic twist of these elliptic curves will also not be congruent modulo $12$. We conclude by Lemma~\ref{KNforpneq3} that there are no $p$-isogenous discriminant ideal twins.
			
			{\renewcommand*{\arraystretch}{1.18}
				\begin{longtable}[c]{cccccc}
					\caption{$j$-invariants and $p$-adic valuation of minimal discriminants of some $p$-isogenous elliptic curves }\\
					\hline
					Isogeny Class & $p$ & $j_1$ & $j_2$ & $\nu_p(\mathfrak{D}^{min}_1)$ & $\nu_p(\mathfrak{D}^{min}_2)$  \\ 
					\hline
					\endfirsthead
					\caption[]{\emph{continued}}\\
					\hline
					Isogeny Class & $p$ & $j_1$ & $j_2$ & $\nu_p(\mathfrak{D}^{min}_1)$ & $\nu_p(\mathfrak{D}^{min}_2)$  \\
					\hline
					\endhead
					\hline
					\multicolumn{4}{r}{\emph{continued on next page}}
					\endfoot
					\hline
					\endlastfoot
					
					\href{https://www.lmfdb.org/EllipticCurve/Q/121/a/}{121.a} & $11$ & $-121$ & $-24729001$ & $10$ & $2$ \\\hline
					\href{https://www.lmfdb.org/EllipticCurve/Q/121/b/}{121.b} & $11$ & $-32768$ & $-32768$ & $9$ & $3$ \\\hline
					\href{https://www.lmfdb.org/EllipticCurve/Q/361/a/}{361.a} & $19$ & $-884736$ & $-884736$ & $9$ & $3$ \\\hline
					\href{https://www.lmfdb.org/EllipticCurve/Q/1849/b/}{1849.b} & $43$ & $-884736000$ & $-884736000$ & $9$ & $3$ \\\hline
					\href{https://www.lmfdb.org/EllipticCurve/Q/4489/b/}{4489.b} & $67$ & $-147197952000$ & $-147197952000$ & $9$ & $3$ \\\hline
					\href{https://www.lmfdb.org/EllipticCurve/Q/26569/a/}{26569.a} & $163$ & $-262537412640768000$ & $-262537412640768000$ & $9$ & $3$
					\label{ta:JDiscCM}	
			\end{longtable}}

			It remains to consider the case when $X_{0}(p)$ has genus zero. First, suppose that
			$E_{1}$ and $E_{2}$ are $p$-isogenous elliptic curves over $\mathbb{Q}$ such that $j(E_{1})=j(E_{2})$. 
			By Theorems~\ref{thmforj-_1728} and \ref{Thm:equaljinv}, $E_1$ and $E_2$ are not discriminant ideal twins.

			Now suppose that $E_{1}$ and $E_{2}$ are $p$-isogenous discriminant ideal
			twins defined over $\mathbb{Q}$ such that $j(E_{1})\neq j(E_{2})$.
			By Theorem~\ref{PropMinDisc}, there are $t_{0},d_{0}\in\mathbb{Z}$ such that $E_{i}$ is $\mathbb{Q}$-isomorphic to $\mathcal{C}_{p,i}(t_{0},d_{0})$ with $t_0 = \pm p^{\frac{12k}{p-1}}$ where $k$ is either $0$ or $1$.
			Thus, $t_{0} \in\left\{
			\pm 1, \pm p^{\frac{12}{p-1}}\right\}  $. It is verified in 
			Remark~\ref{SameJ} that $j\!\left(  \mathcal{C}_{p,1}\!\left(
			\pm1,d_{0}\right)  \right)  =j\!\left(  \mathcal{C}_{p,2}\!\left(  \pm
			p^{\frac{12}{p-1}},d_{0}\right)  \right)  $. In particular,
			$\mathcal{C}_{p,2}\!\left(  \pm p^{\frac{12}{p-1}},d_{0}\right)  $ is a twist of
			$\mathcal{C}_{p,1}\!\left(  \pm1,d_{0}\right)  $ and thus we only have to
			consider $t_{0}=\pm1$. 
			The table below gives the LMFDB label of $\mathcal{C}_{p,i}\!\left(
			\pm1,d_{0}^{\prime}\right)$, where $d_0'$ gives the quadratic twist of $\mathcal{C}_{p,i}\!\left(
			\pm1,1\right)  $ with smallest conductor and smallest minimal discriminant in absolute value. 
			\[ \scalebox{0.95}{
				\renewcommand{\arraystretch}{1.15}
				\renewcommand{\arraycolsep}{.3cm}
				$\begin{array}
					[c]{cccccccc} \toprule%
					p & d_{0}^{\prime} & \mathcal{C}_{p,1}(1,d_{0}^{\prime}) & \mathcal{C}%
					_{p,2}(1,d_{0}^{\prime}) &  & d_{0}^{\prime} & \mathcal{C}_{p,1}%
					(-1,d_{0}^{\prime}) & \mathcal{C}_{p,2}(-1,d_{0}^{\prime})\\\hline
					2 & -1 & \href{http://www.lmfdb.org/EllipticCurve/Q/4225.h1}{4225.h1} & \href{http://www.lmfdb.org/EllipticCurve/Q/4225.h2}{4225.h2} &  & -3 & \href{http://www.lmfdb.org/EllipticCurve/Q/49.a3}{49.a3} & \href{http://www.lmfdb.org/EllipticCurve/Q/49.a4}{49.a4}\\\hline
					3 & 21 & \href{http://www.lmfdb.org/EllipticCurve/Q/196.a1}{196.a1} & \href{http://www.lmfdb.org/EllipticCurve/Q/196.a2}{196.a2} &  & -39 & \href{http://www.lmfdb.org/EllipticCurve/Q/676.b1}{676.b1} & \href{http://www.lmfdb.org/EllipticCurve/Q/676.b2}{676.b2}\\\hline
					5 & 2 & \href{http://www.lmfdb.org/EllipticCurve/Q/1369.f1}{1369.f1} & \href{http://www.lmfdb.org/EllipticCurve/Q/1369.f2}{1369.f2} &  & -1 & \href{http://www.lmfdb.org/EllipticCurve/Q/43264.f1}{43264.f1} & \href{http://www.lmfdb.org/EllipticCurve/Q/43264.f2}{43264.f2}\\\hline
					7 & 1 & \href{http://www.lmfdb.org/EllipticCurve/Q/3969.f1}{3969.f1} & \href{http://www.lmfdb.org/EllipticCurve/Q/3969.f2}{3969.f2} &  & -1 & \href{http://www.lmfdb.org/EllipticCurve/Q/1369.b1}{1369.b1} & \href{http://www.lmfdb.org/EllipticCurve/Q/1369.b2}{1369.b2}\\\hline
					13 & -2 & \href{http://www.lmfdb.org/EllipticCurve/Q/9025.a1}{9025.a1} & \href{http://www.lmfdb.org/EllipticCurve/Q/9025.a2}{9025.a2} &  & -1 & \href{http://www.lmfdb.org/EllipticCurve/Q/20736.n1}{20736.n1} & \href{http://www.lmfdb.org/EllipticCurve/Q/20736.n2}{20736.n2}\\
					\bottomrule &  &  &  & 
				\end{array} $} \vspace{-1em}
			\]
			The result now follows, since $\mathcal{C}_{p,i}\!\left(  \pm1,d_{0}^{\prime
			}\right)  $ is a twist of $\mathcal{C}_{p,i}\!\left(  \pm1,d_{0}\right)  $,
			and $\mathcal{C}_{p,1}\!\left(  \pm1,d_{0}^{\prime}\right)  $ and
			$\mathcal{C}_{p,2}\!\left(  \pm1,d_{0}^{\prime}\right)  $ are isogenous discriminant
			ideal twins.
		\end{proof}
		
		\begin{proposition}
			\label{CorQuaClass}Let $K$ be an imaginary quadratic field with unit group
			$\left\{  \pm1\right\}  $ and let $p\in\left\{  2,3,5,7,13\right\}  $. Suppose
			that $E_{1}$ and $E_{2}$ are $p$-isogenous discriminant ideal twins over $K$.
			Then one of the following is true:
			
			\begin{enumerate}
				\item If the $j$-invariants of $E_{1}$ and $E_{2}$ are not both identically
				$0$ or $1728$, then $E_{i}$ is a twist of $\mathcal{C}_{p,i}(\pm t_{0},1)$
				where%
				\begin{equation}\label{valt0ImK}
					t_{0}\in\left\{
					\begin{array}
						[c]{cl}%
						\left\{  1,\lambda_{1},\lambda_{2}\right\}   & \text{if }p\mathcal{O}%
						_{K}=\mathfrak{p}_{1}\mathfrak{p}_{2}\text{ with }\mathfrak{p}_{i}^{\frac{12}{p-1}}=\lambda_{i}%
						\mathcal{O}_{K}\text{,}\\
						\left\{  1,\lambda \right\}   & \text{if }p\mathcal{O}_{K}%
						=\mathfrak{p}^{2}\text{ with } \fp^{\frac {12}{p-1}} = \lambda \mathcal{O}_K,\\
						\left\{  1\right\}   & \text{otherwise.}%
					\end{array}
					\right.
				\end{equation}
				\item If the $j$-invariants of $E_{1}$ and $E_{2}$ are both identically $0$ or
				$1728$, then
				\begin{enumerate}
					\item if $j(E_{1})=0$ and $3$ is ramified in $K$, then $E_{i}$
					is a twist of $E_{i,0}(1)$;
					\item if $j(E_{1})=1728$ and $2$ is  ramified in $K$, then $E_{i}$ is a twist of $E_{i,1728}(1)$.
				\end{enumerate}
			\end{enumerate}
			
			Moreover, the converse to $\left(  1\right)  $ and $\left(  2\right)  $ holds
			if $p\neq2$ or if $j(E_1) = j(E_2) \neq 0, 1728$ and the ring of CM endomorphisms is not contained in $K$.
		\end{proposition}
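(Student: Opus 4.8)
The plan is to read this off from the structural results already established, with the only new ingredient being the ideal-theoretic bookkeeping permitted by $\OK^\times=\{\pm1\}$. Concretely, Theorem~\ref{PropMinDisc} handles the forward direction in the non-singular case, Theorem~\ref{thmforj-_1728} the singular case, and Theorems~\ref{ThmClassification} and~\ref{Thm:equaljinv} the converses.

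\textbf{Forward direction, $j$-invariants not both $0$ or $1728$.} Suppose $E_1,E_2$ are $p$-isogenous discriminant ideal twins over $K$. By Theorem~\ref{PropMinDisc} there are $t_0,d_0\in\OK$ with $E_i\cong_K\mathcal{C}_{p,i}(t_0,d_0)$ and $\nup(t_0)=\frac{12}{p-1}k_{\fp}$ with $0\le k_{\fp}\le\nup(p)$ at every prime $\fp$; in particular $\nup(t_0)=0$ for $\fp\nmid p$, so $t_0\OK$ is supported on the primes above $p$, and since $\OK^\times=\{\pm1\}$ the element $t_0$ is determined by the ideal $t_0\OK$ up to sign. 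I would then split on the factorization of $p\OK$ in the quadratic field $K$. If $p\OK=\fp_1\fp_2$ with $\fp_1\neq\fp_2$, then $\nu_{\fp_i}(t_0)\in\{0,\tfrac{12}{p-1}\}$, so $t_0\OK\in\{\OK,\ \fp_1^{12/(p-1)},\ \fp_2^{12/(p-1)},\ p^{12/(p-1)}\OK\}$; principality of $t_0\OK$ forces $\fp_i^{12/(p-1)}=\lambda_i\OK$ in the middle two cases. If $p\OK=\fp^2$, the same analysis gives $t_0\OK\in\{\OK,\ \fp^{12/(p-1)},\ p^{12/(p-1)}\OK\}$, and if $p$ is inert then $t_0\OK\in\{\OK,\ p^{12/(p-1)}\OK\}$. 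Finally, by Remark~\ref{SameJ} the unordered pair $\{\mathcal{C}_{p,1}(\pm p^{12/(p-1)},d_0),\,\mathcal{C}_{p,2}(\pm p^{12/(p-1)},d_0)\}$ has the same two $j$-invariants as $\{\mathcal{C}_{p,1}(\pm1,d_0),\,\mathcal{C}_{p,2}(\pm1,d_0)\}$, so, after possibly interchanging $E_1$ and $E_2$, the solutions $t_0=\pm p^{12/(p-1)}$ may be discarded. This leaves exactly the list~\eqref{valt0ImK}, with $E_i$ a twist of $\mathcal{C}_{p,i}(\pm t_0,1)$.

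\textbf{Forward direction, $j$-invariants both $0$ or $1728$.} Since $K$ is imaginary quadratic with $\OK^\times=\{\pm1\}$, we have $K\neq\Q(i)$ and $K\neq\Q(\zeta_3)$, so $i\notin K$ and $\zeta_3\notin K$ automatically. Theorem~\ref{thmforj-_1728} then forces $(p,j)\in\{(3,0),(2,1728)\}$ with $p\OK$ a square; for a quadratic field $p\OK$ is a square precisely when $p$ ramifies in $K$. By Lemma~\ref{j0or1728}, $E_i$ is $K$-isomorphic to $E_{i,j}(d)$ for some $d$, i.e.\ a twist of $E_{i,j}(1)$, which is exactly $(2a)$ and $(2b)$.

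\textbf{Converse.} For $p\neq 2$: given $t_0$ in the list~\eqref{valt0ImK}, a direct check shows $\nup(\pm t_0)=\tfrac{12}{p-1}k_{\fp}$ with $0\le k_{\fp}\le\nup(p)$ at every prime, so Theorem~\ref{ThmClassification}, including its last paragraph, gives that $\mathcal{C}_{p,1}(\pm t_0,1)$ and $\mathcal{C}_{p,2}(\pm t_0,1)$ are $p$-isogenous discriminant ideal twins provided they are not $K$-isomorphic; this is automatic when their $j$-invariants differ and, when the $j$-invariants coincide (the finitely many values in Theorem~\ref{Thm:equaljinv}), holds exactly when the CM order is not contained in $K$. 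For the singular case with $p=3$, the converse of Theorem~\ref{thmforj-_1728}$(a)$ applies since $\zeta_3\notin K$ and $3\OK$ is a square. The remaining case in the ``Moreover'', namely $p=2$ with $j(E_1)=j(E_2)\notin\{0,1728\}$, involves only the parameter $t_0=64$ of Theorem~\ref{Thm:equaljinv}, and one checks as there that $\mathcal{C}_{2,1}(64,1),\mathcal{C}_{2,2}(64,1)$ are discriminant ideal twins precisely when $2\OK$ is a square and $\Q(\sqrt{-2})\not\subseteq K$. The exclusion of $p=2$ in cases~$(1)$ and~$(2)$ otherwise is necessary: the if-and-only-if in Theorem~\ref{ThmClassification} fails for $p=2$ (Example~\ref{ex:p2}), and the converse of Theorem~\ref{thmforj-_1728}$(b)$ fails for $(p,j)=(2,1728)$ (Example~\ref{failureat2specialj}).

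\textbf{Main obstacle.} The only substantive step is the middle of the first paragraph: converting the valuation constraints of Theorem~\ref{PropMinDisc} into the explicit finite list~\eqref{valt0ImK}. This requires tracking, in each factorization type of $p\OK$, exactly when the powers $\fp_i^{12/(p-1)}$ are principal (so that $\OK^\times=\{\pm1\}$ genuinely determines $t_0$) and correctly collapsing the solutions $t_0=\pm p^{12/(p-1)}$ against $t_0=\pm1$ via Remark~\ref{SameJ} up to twist and interchange of $E_1,E_2$. Everything else is a direct citation of the theorems above.
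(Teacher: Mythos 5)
Your proposal is correct and follows essentially the same approach as the paper: the forward direction in the non-singular case via Theorem~\ref{PropMinDisc}, the singular case via Theorem~\ref{thmforj-_1728}, the case split on the factorization type of $p\mathcal{O}_K$ with the resulting principality constraint on $\mathfrak{p}^{12/(p-1)}$, and the reduction $t_0 = \pm p^{12/(p-1)} \rightsquigarrow t_0 = \pm 1$ via Remark~\ref{SameJ}. The one place where you go further than the paper's written proof is the ``Moreover'' clause: the paper leaves the converse implicit in its citations, while you spell out the appeal to Theorem~\ref{ThmClassification}, the $p=2$ loophole through Theorem~\ref{Thm:equaljinv}, and the counterexamples (Examples~\ref{ex:p2} and~\ref{failureat2specialj}) justifying the exclusion of $p=2$ otherwise; this is a welcome bit of additional explicitness rather than a deviation in method.
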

		
		\begin{proof}
			The assumption that $K$ is an imaginary quadratic field with unit group
			$\left\{  \pm1\right\}  $ implies that $i,\zeta_{3}\not \in K$. Consequently,
			if $j(E_{1})=j(E_{2})\in\left\{  0,1728\right\}  $, then the proposition holds
			automatically by Theorem \ref{thmforj-_1728}. So it remains to consider the
			case when the $j$-invariants of $E_{1}$ and $E_{2}$ are not both identically
			$0$ or $1728$. 
			
			To this end, Theorem \ref{PropMinDisc} implies that $E_{i}$ is
			a twist of $\mathcal{C}_{p,i}(t_{0},1)$ where $t_{0}\in\mathcal{O}_{K}$ such
			that
			\begin{equation}
				\nu_{\mathfrak{q}}(t_{0})=\frac{12k}{p-1}\ \text{with\ }0\leq k\leq
				\nu_{\mathfrak{q}}(p)\text{ for each prime ideal }\mathfrak{q}\text{ of
				}K\text{}\label{t0val}%
			\end{equation}
			so long as $j_1 \neq j_2$ or $j_1 = j_2$ and $E_1$ is not isomorphic to $E_2$ over $K$,
			i.e., as long as the CM endomorphisms of $E_1$ and $E_2$ are not contained in $K.$
			We exclude the case $j_1 = j_2$ moving forward. 
			
			Continuing, by Remark~\ref{SameJ}, $\mathcal{C}_{p,1}(\pm1,1)$ is a
			twist of $\mathcal{C}_{p,2}(\pm p^{\frac{12}{p-1}},1)$. Consequently, it
			suffices to find $t_{0}\in\mathcal{O}_{K}\backslash\left\{   \pm p^{\frac
				{12}{p-1}}\right\}  $ which satisfy (\ref{t0val}). 
			We now proceed by cases.
			
			Case 1. Suppose that $p$ is inert in $K$. Then the proof of Proposition
			\ref{classinQ} for $p\in\left\{  2,3,5,7,13\right\}  $ holds when $\mathbb{Q}$
			is replaced by~$K$. Thus, $E_{i}$ is a twist of $\mathcal{C}_{p,i}(\pm1,1)$.
			
			Case 2. Suppose that $p$ splits in $K$ with $p\mathcal{O}_{K}=\mathfrak{p}%
			_{1}\mathfrak{p}_{2}$. 
			Then $\nu_{\fp_i}(p) = 1$ and $\nu_{\fp_i}(t_0) \in \{ 0, \frac{12}{p-1}\}$ for $i = 1, 2$ and $\nu_{\fq}(t_0) = 0$ for all other primes $\fq$.  
			Note that if both $\nu_{\fp_1}(t_0) = \nu_{\fp_2}(t_0) = \frac{12}{p-1}$, then $t_0 = \pm p^{\frac{12}{p-1}}$, which we have already considered.  
			Similarly, if $\nu_{\fp_1}(t_0) = \nu_{\fp_2}(t_0) = 0$, then $t_0 = \pm 1$, which we have also considered.
			Thus the two cases are if $\nu_{\fp_1}(t_0) = \frac{12}{p-1}$ and  $\nu_{\fp_2}(t_0) = 0$ or if $\nu_{\fp_1}(t_0) = 0$ and  $\nu_{\fp_2}(t_0) = \frac{12}{p-1}$.
			Then $\mathcal{O}_K$ has an element $t_0$ with the appropriate valuation if and only $\fp_1^{\frac{12}{p-1}} = \lambda_1\mathcal{O}_K$ is principal. Hence $\fp_2^{\frac{12}{p-1}} = \lambda_2\mathcal{O}_K$ is principal. Due to the Galois action, $\fp_1^{\frac{12}{p-1}}$ is principal if and only if $\fp_2^{\frac{12}{p-1}}$ is principal.
			
			Case 3. Suppose that $p$ ramifies in $K$ with $p\mathcal{O}_{K}=\mathfrak{p}%
			^{2}$. 
			Then $\nup(p) = 2$ and $\nup(t_0) \in \{0, \frac{12}{p-1}, \frac{24}{p-1} \}$.
			For $\nup(t_0) = \frac{24}{p-1}$, $t_0 = \pm p^{\frac{12}{p-1}}$, which has been considered.
			Similarly for $\nup(t_0) = 0$.
			As before, notice we have an element $t_0 \in \mathcal{O}_K$ with $\nup(t_0) = \frac{12}{p-1}$ and $\nu_{\fq}(t_0) = 0$ for all other primes $\fq$ if and only if $\fp^{\frac{12}{p-1}} = \lambda \mathcal{O}_K$ is principal, giving us our result.
			Note that for $p = \{2, 3, 7\}$ the ideal $\fp^{\frac{12}{p-1}}$ is principal automatically as $\frac{12}{p-1}$ is even and thus $\fp^{\frac{12}{p-1}} = p^{6/(p-1)}$.
			% \begin{center}
				%     \begin{tabular}{cccccc}\toprule
					%     $p$ & $2$ & $3$ & $5$ & $7$ & $13$ \\ \hline
					%     $\frac{12}{p-1}$ & $12$ & $6$ & $3$ & $2$ & $1$ \\ \bottomrule
					%     \end{tabular}
				% \end{center}
		\end{proof}
		
		\begin{remark} \label{remark:Qcurves}
			Let $K$ be a number field where $p\OK = \fp^{2e'}$ for $p \in \{2,3,7 \}$.
			The end of the proof of Proposition~\ref{CorQuaClass} demonstrates that some of the $p$-isogenous discriminant ideal twins over $K$ occur as base changes of rational elliptic curves since $\fp^{\frac{12e'}{p-1}}=p^{\frac{6}{p-1}} \OK$.  
			For $p \in \{2, 3, 7\}$ the two curves have the same $j$-invariant, $8000, 54000,$ and $16581375$, $-3375$, respectively, thus they are twists. In fact, these are the curves noted in Theorem \ref{Thm:equaljinv} and they are twists by $-p$, and therefore are $K$-isomorphic over a number field containing $\sqrt{-p}$.
			Their discriminants over $\mathbb{Q}$ and twist parameters are given in Table \ref{liminalDITs}.
			
			\begin{center} 
				{\renewcommand{\arraystretch}{1.2}
					\renewcommand{\arraycolsep}{2cm}
					\label{liminalDITs}
					\begin{tabular}{cccccc} \toprule
						$p$ & $t_0$ & $j$ & $\Delta_{1}^\text{min}$ & $\Delta_{2}^\text{min}$ & CM field \\ \hline
						$2$ & $64$ & $8000$ & $2^9$ & $2^{15}$ & $\mathbb{Q}(-2)$ \\\hline
						$3$ & $27$ & $54000$ & $2^8 3^3$ & $2^8 3^9$ & $\mathbb{Q}(-3)$ \\\hline
						$7$ & $7$ & $16581375$ & $2^{12} 3^6 7^9$ & $2^{12} 3^6 7^3$ & $\mathbb{Q}(-7)$ \\\hline
						$7$ & $-7$ & $-3357$ & $-7^9$ & $-7^3$ & $\mathbb{Q}(-7)$\\ \bottomrule
				\end{tabular}}
			\end{center}
		\end{remark}
		
		% Building on this remark we see that for other number fields there are rational elliptic curves that are discriminant ideal twins over a number field $K$.
		
		% \begin{corollary}
			% Let $p \in \{2, 3, 5, 7, 13\}$ and $K$ be a number field with $p \OK = \fp^e$.  
			% When $e \mid \frac{12k}{p-1}$ for some $0 \leq k \leq e$ then $t_0 = \pm p^M$ for $M = \frac{12k}{e(p-1)}$ gives pairs of rational elliptic curves that are potentially discriminant ideal twins over $K$.
			% Further, the possible values of $M$ are $0 \leq M \leq \frac{12}{p-1}$.
			% \end{corollary}
		
		% \begin{proof}
			% As the ideal $\fp^{\frac{12k}{p-1}} = (\fp^e)^M = p^M \OK$ is principal with generator $p^M$, by Theorem \ref{ThmClassification} the curves $\mathcal{C}_{p, i}(\pm p^M, 1)$ are discriminant ideal twins over $K$ except possibly when $p = 2$ or if the curves are isomorphic over $K$.  We can check all the $j$-invariants to see when they are equal for $\mathcal{C}_{p, 1}(\pm p^M, 1)$ and $\mathcal{C}_{p, 2}(\pm p^M, 1)$ and find this only occurs in the cases given in Remark \ref{remark:Qcurves}.  Thus only cases where the two curves can be isomorphic is when $\sqrt{-p}$ is in $K$ and $t_0$ is in Table \ref{liminalDITs}.
			
			% Finally, as $0 \leq k \leq e$ and $eM = \frac{12}{p-1} k$, $0 \leq M \leq \frac{12}{p-1}$ where $M = 0$ if and only if $k = 0$.  All possible non-zero values of $M$ occur as we can set $M$ and then solve for the relationship between $e$ and $k$ such that $k = \frac{(p-1)M}{12}e$.
			% \end{proof}

		From Proposition~\ref{CorQuaClass} and Theorems~\ref{thmforj-_1728} and \ref{Thm:equaljinv}, we obtain an algorithm for $p\in\left\{2,3,5,7,13\right\}  $ to explicitly determine all $p$-isogenous discriminant ideal twins over a fixed imaginary quadratic field with unit group $\left\{\pm1\right\}  $. 
		For instance, given such a field $K$, if $2$ or $3$ are totally ramified, then we have prime isogenous discriminant ideal twins with the same $j$-invariant $0$ or $1728$ by Theorem~\ref{thmforj-_1728}. 
		Once this case is considered, we then apply Proposition~\ref{CorQuaClass} by first factoring the ideals $\{2\mathcal{O}_{K},3\mathcal{O}_{K},5\mathcal{O}_{K},7\mathcal{O}_{K},13\mathcal{O}_{K}\}$. 
		Then, check if the appropriate powers of the prime ideal factors are principal. 
		This allows us to find all possible values of $t_{0}$ that give discriminant ideal twins. 
		Using Theorem~\ref{Thm:equaljinv} we then check that curves with the same $j$-invariant are not $K$-isomorphic. 
		The next corollary illustrates this in the setting of the class number four number field $\mathbb{Q}(\sqrt{-33})$.

		\begin{corollary}\label{corQ33}
			Let $p\in\left\{  2,3,5,7,13\right\}  $ and let $K=\mathbb{Q}\!\left(
			\sqrt{-33}\right)  $. Let $E_{1}$ and $E_{2}$ be $p$-isogenous discriminant
			ideals twins defined over $K$. Then $E_{1}$ and $E_{2}$ are twists of one of
			the following:
			
			\begin{enumerate}
				\item the elliptic curves appearing in Table \ref{ta:clasoverQ};
				
				\item $E_{1,1728}(1)$ and $E_{2,1728}(1)$, respectively, if $p=2$ and
				$j(E_{1})=j(E_{2})=1728$;
				
				\item $E_{1,0}(1)$ and $E_{2,0}(1)$, respectively, if $p=3$ and $j(E_{1}%
				)=j(E_{2})=0$;
				
				\item $\mathcal{C}_{p,1}(t_{0},1)$ and $\mathcal{C}_{p,2}(t_{0},1)$,
				respectively, where $t_{0}$ is one of the elements given in
				Table~\ref{ta:clasoverim33}. The table also lists the  $j$-invariant of the
				elliptic curves and their minimal discriminant ideal.
			\end{enumerate}
		\end{corollary}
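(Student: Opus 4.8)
The plan is to read off Corollary~\ref{corQ33} from Proposition~\ref{CorQuaClass} and Theorems~\ref{thmforj-_1728} and~\ref{Thm:equaljinv}, all of which apply because $K=\Q(\sqrt{-33})$ is imaginary quadratic with unit group $\{\pm1\}$, so in particular $i,\zeta_3\notin K$. The only new input required is the arithmetic of $K$. Since $-33\equiv3\pmod4$ we have $\OK=\Z[\sqrt{-33}]$ with discriminant $-132=-2^{2}\cdot3\cdot11$, so the ramified primes are $2,3,11$, and in particular $2\OK=\fp_{2}^{2}$ and $3\OK=\fp_{3}^{2}$. Reducing $-33$ modulo $5,7,13$ shows that $5$ and $13$ are inert while $7\OK=\fp_{7}\overline{\fp}_{7}$ splits. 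Finally, genus theory with three ramified primes gives $|Cl(K)/Cl(K)^{2}|=2^{2}$, which equals $|Cl(K)|=4$, so $Cl(K)\cong(\Z/2\Z)^{2}$ and $\mathfrak{q}^{2}$ is principal for every fractional ideal $\mathfrak{q}$ of $K$.

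First I would dispose of the case $j(E_{1})=j(E_{2})\in\{0,1728\}$. By Theorem~\ref{thmforj-_1728} this forces $(p,j)\in\{(3,0),(2,1728)\}$ with $3\OK$, respectively $2\OK$, a square; both hold because $2$ and $3$ ramify in $K$. This yields exactly items~(2) and~(3) of the corollary, and they are discriminant ideal but not discriminant twins since $\sqrt{3}\notin K$.

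Next I would run Proposition~\ref{CorQuaClass}(1) together with~\eqref{valt0ImK} through the five primes. For $p\in\{5,13\}$ (inert) one gets $t_{0}\in\{1\}$, so $E_{i}$ is a $K$-twist of $\mathcal{C}_{p,i}(\pm1,1)$, which by Proposition~\ref{classinQ} are among those in Table~\ref{ta:clasoverQ}; moreover Theorem~\ref{Thm:equaljinv} adds nothing here since $t^{2}-125$ and $t^{2}-13$ have no root in $K$. For $p\in\{2,3\}$ (ramified), $\fp_{p}^{12/(p-1)}=p^{6/(p-1)}\OK$ is principal, so $t_{0}\in\{1,\,p^{6/(p-1)}\}$: the value $1$ again gives Table~\ref{ta:clasoverQ} curves, while $t_{0}=64$ for $p=2$ and $t_{0}=27$ for $p=3$ give $\mathcal{C}_{p,i}(t_{0},1)$ with common $j$-invariant $8000$, respectively $54000$, i.e.\ entries of Table~\ref{ta:clasoverim33}. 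For $p=7$ (split), $\fp_{7}^{2}$ and $\overline{\fp}_{7}^{2}$ are principal because $Cl(K)$ has exponent $2$; solving the norm equation $a^{2}+33b^{2}=49$ shows $\fp_{7}^{2}=(4+\sqrt{-33})$ and $\overline{\fp}_{7}^{2}=(4-\sqrt{-33})$, so~\eqref{valt0ImK} gives $t_{0}\in\{1,\,4+\sqrt{-33},\,4-\sqrt{-33}\}$, with $t_{0}=1$ giving the rational curves and $t_{0}=4\pm\sqrt{-33}$ the remaining entries of Table~\ref{ta:clasoverim33}.

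It remains to confirm that these families really are non-$K$-isomorphic discriminant ideal twins. For $p\in\{3,5,7,13\}$ this is the converse direction of Proposition~\ref{CorQuaClass}, whose hypotheses hold since $p\neq2$ --- and for the new $j=54000$ curves also because the CM order there is an order in $\Q(\sqrt{-3})$, which is not a subfield of $K$; one must additionally note that $\nu_{\fp_{7}}(4+\sqrt{-33})=2$, $\nu_{\overline{\fp}_{7}}(4+\sqrt{-33})=0$ and the valuation is $0$ at all other primes, and that $t^{2}-8t+49$ is not among the factors in the proof of Theorem~\ref{Thm:equaljinv} for which the two parameterized curves share a $j$-invariant, so $\mathcal{C}_{7,1}(4\pm\sqrt{-33},1)$ and $\mathcal{C}_{7,2}(4\pm\sqrt{-33},1)$ are not $K$-isomorphic. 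The main obstacle is $p=2$, where the equivalence in Theorem~\ref{ThmClassification} is unavailable: for $t_{0}=64$ one instead uses the part of Proposition~\ref{CorQuaClass} that applies when $j(E_{1})=j(E_{2})\neq0,1728$ and the CM order is not contained in $K$, together with Theorem~\ref{Thm:equaljinv} and Remark~\ref{remark:Qcurves}, which is legitimate here because $2\OK$ is a square and $\Q(\sqrt{-2})\not\subseteq K$ (the unique quadratic subfield of $K$ being $K$ itself); and for $t_{0}=\pm1$ one uses that the relevant $2$-isogenous rational curves of Table~\ref{ta:clasoverQ} have good reduction at $2$ and everywhere potentially good reduction, so Corollary~\ref{DokLemma} keeps them discriminant ideal twins after base change to $K$. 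Collecting all cases produces the four possibilities in the statement.
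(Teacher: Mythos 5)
Your overall route mirrors the paper's: use Theorem~\ref{thmforj-_1728} for the singular $j$-invariant case, then Proposition~\ref{CorQuaClass} and the arithmetic of $K=\Q(\sqrt{-33})$ to enumerate parameters $t_0$, then verify the converse. Your handling of the $p=2$ and $p=3$ ramified cases, the inert primes $5,13$, and the base change of Table~\ref{ta:clasoverQ} is fine, and your appeals to Remark~\ref{remark:Qcurves}, Corollary~\ref{DokLemma}, and the CM field argument for the equal-$j$-invariant cases are legitimate.

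However, there is a genuine gap in your treatment of $p=7$. Proposition~\ref{CorQuaClass}(1) says $E_i$ is a twist of $\mathcal{C}_{p,i}(\pm t_0,1)$, so for $p=7$ you must run through $t_0 \in \{\pm(4+\sqrt{-33}),\, \pm(4-\sqrt{-33})\}$, not just the two representatives $4\pm\sqrt{-33}$. Since complex conjugation sends $4+\sqrt{-33}\mapsto 4-\sqrt{-33}$ and swaps the two coordinate functions $j_{7,1}, j_{7,2}$ (as the paper records in its closing $j$-invariant identities), the parameter $4-\sqrt{-33}$ produces the \emph{same} unordered pair of $j$-invariants as $4+\sqrt{-33}$, namely $\{1509417\sqrt{-33}-1404060,\, -1509417\sqrt{-33}-1404060\}$. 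Your claim that ``$t_0 = 4\pm\sqrt{-33}$'' gives ``the remaining entries of Table~\ref{ta:clasoverim33}'' is therefore false: those two values account for only the second row. The first row, $t_0 = \sqrt{-33}-4 = -(4-\sqrt{-33})$ with $j$-invariants $\{53865\sqrt{-33}-72900,\, -53865\sqrt{-33}-72900\}$, comes from the sign choice you dropped. The paper gets this right by explicitly listing all of $\pm(\sqrt{-33}\pm 4)$ in \eqref{t0values33}, checking none are roots of $t^2+13t+49$ nor give $K$-isomorphic curves, and then collapsing the four values into two equivalence classes via the $j$-invariant identities at the end of its proof. To repair your argument, carry the $\pm$ from Proposition~\ref{CorQuaClass}(1) through the $p=7$ case, verify non-singularity and non-$K$-isomorphism for all four parameters, and then invoke those identities to identify $4+\sqrt{-33}$ with $4-\sqrt{-33}$ and $\sqrt{-33}-4$ with $-(4+\sqrt{-33})$ as twist-and-swap equivalent pairs.
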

		% \vspace{-1em}
		{\renewcommand*{\arraystretch}{1.15}
			\begin{longtable}[c]{cccc}
				\caption{Additional prime isogenous discriminant ideal twins over $\Q(\sqrt{-33})$}\\
				\hline
				$p$ & $t_{0}$ & $j\!\left(  \mathcal{C}_{p,1}(t_{0},1)\right)  $ & $j\!\left(
				\mathcal{C}_{p,2}(t_{0},1)\right)  $ \\ 
				\hline
				\endfirsthead
				\caption[]{\emph{continued}}\\
				\hline
				$p$ & $t_{0}$ & $j\!\left(  \mathcal{C}_{p,1}(t_{0},1)\right)  $ & $j\!\left(
				\mathcal{C}_{p,2}(t_{0},1)\right)  $ \\
				\hline
				\endhead
				\hline
				\multicolumn{4}{r}{\emph{continued on next page}}
				\endfoot
				\hline
				\endlastfoot
				
				$2$ & $64$ & $8000$ & $8000$ \\\hline
				$3$ & $27$ & $54000$ & $54000$  \\\hline
				$7$ & $\sqrt{-33} - 4$ & $53865\sqrt{-33} - 72900$ & $-53865\sqrt{-33} - 72900$ \\\cmidrule(lr){2-4}
				% & $-\sqrt{-33} - 4$ & $-53865\sqrt{-33} - 72900$ & $53865\sqrt{-33} - 72900$ \\\cmidrule(lr){2-4}
				% & $-\sqrt{-33} + 4$ & $-1509417\sqrt{-33} - 1404060$ & $1509417\sqrt{-33} - 1404060$ \\\cmidrule(lr){2-4}
				& $\sqrt{-33} + 4$ & $1509417\sqrt{-33} - 1404060$ & $-1509417\sqrt{-33} - 1404060$ 
				\label{ta:clasoverim33}	
		\end{longtable}}
		
		\begin{proof}
			We begin by noting that $p\in \{5,13\}$ is inert in $K$. By Proposition~\ref{CorQuaClass}, the only $p$-isogenous discriminant ideal twins
			are twists of $\mathcal{C}_{p,1}(\pm1,d)$ and $\mathcal{C}_{p,2}(\pm1,d)$.
			For the remaining primes, \eqref{factorsQ33} gives the prime ideal factorization of the ideal $p\mathcal{O}_{K}$.
			\begin{equation}
				\renewcommand{\arraystretch}{1.4}
				\begin{array}
					[c]{ccc}\bottomrule
					2 & 3 & 7 \\\hline
					\left(  2,\sqrt{-33}+1\right)  ^{2}\mathcal{O}_{K} & \left(  3,\sqrt
					{-33}\right)  ^{2}\mathcal{O}_{K}& \left(  7,\sqrt
					{-33}+3\right)  \left(  7,\sqrt{-33}+4\right)  \mathcal{O}_{K}\\\hline
				\end{array}
				\label{factorsQ33}%
			\end{equation}
			In particular, $2$ and $3$ are ramified in $K$. It is then easily checked that $E_{1,0}(1)$ and $E_{2,0}(1)$ are $3$-isogenous discriminant ideal twins over $K$. Similarly, $E_{1,1728}(1)$ and $E_{2,1728}(1)$ are $2$-isogenous discriminant ideal twins over $K$. 
			
			It remains to consider the case where $p\in\left\{  2,3,7\right\}  $ and the $j$-invariants of $E_1$ and $E_2$ are not identically $0$ or $1728$. To this end, observe that $\mathfrak{p}^{\frac{12}{p-1}}$ is a principal ideal where $\mathfrak{p}$ is one of the
			prime ideal factors appearing in (\ref{factorsQ33}). In fact,
			\[
			\begin{array}
				[c]{rclcrcl}
				\left(  2,\sqrt{-33}+1\right)  ^{12}\mathcal{O}_{K} & = & 64\mathcal{O}_{K} &
				\qquad & \left(  7,\sqrt{-33} + 3\right)  ^{2}\mathcal{O}_{K} & = & \left(
				\sqrt{-33}-4\right)  \mathcal{O}_{K}\\
				\left(  3,\sqrt{-33}\right)  ^{6}\mathcal{O}_{K} & = & 27\mathcal{O}_{K} &  &
				\left(  7,\sqrt{-33}+4\right)  ^{2}\mathcal{O}_{K} & = & \left(  \sqrt
				{-33} + 4\right)  \mathcal{O}_{K}
			\end{array}
			.
			\]
			By Proposition \ref{CorQuaClass} it remains to check whether $\mathcal{C}%
			_{p,1}(t_{0},1)$ and $\mathcal{C}_{p,2}(t_{0},1)$ are discriminant ideal twins
			for%
			\begin{equation}
				\left(  p,t_{0}\right)  \in\left\{  \left(  2,\pm64\right)  ,\left(
				3,\pm27\right)  ,\left(  7,\pm\left(  \sqrt{-33}\pm4\right)  \right)
				\right\}  .\label{t0values33}%
			\end{equation}
			We note that $\mathcal{C}_{2,i}(-64,1)$ and $\mathcal{C}_{3,i}(-27,1)$ are
			singular curves. The remaining $\left(  p,t_{0}\right)  $ in (\ref{t0values33}%
			) result in discriminant ideal twins. The corollary now follows since%
			\begin{align*}
				j\!\left(  \mathcal{C}_{7,1}\!\left(  \sqrt{-33}+4,1\right)  \right)    &
				=j\!\left(  \mathcal{C}_{7,2}\!\left(  -\sqrt{-33}+4,1\right)  \right)  ,\\
				j\!\left(  \mathcal{C}_{7,2}\!\left(  \sqrt{-33}+4,1\right)  \right)    &
				=j\!\left(  \mathcal{C}_{7,1}\!\left(  -\sqrt{-33}+4,1\right)  \right)  ,\\
				j\!\left(  \mathcal{C}_{7,1}\!\left(  \sqrt{-33}-4,1\right)  \right)    &
				=j\!\left(  \mathcal{C}_{7,2}\!\left(  -\sqrt{-33}-4,1\right)  \right)  ,\\
				j\!\left(  \mathcal{C}_{7,2}\!\left(  \sqrt{-33}-4,1\right)  \right)    &
				=j\!\left(  \mathcal{C}_{7,1}\!\left(  -\sqrt{-33}-4,1\right)  \right)  .\qedhere
			\end{align*}
		\end{proof}
		
		To conclude our classification of discriminant ideal twins over imaginary
		quadratic fields, it remains to consider $\mathbb{Q}(i)$ and $\mathbb{Q}(\zeta_{3})$, which are the only imaginary quadratic fields with unit group
		not equal to $\left\{  \pm1\right\}  $. This is the setting of our last proposition:
		
		% \aly{Should be good through here.  Need to update following Prop with Thm 3.9 in mind.}

		\begin{proposition}\label{PropImKu}
			Let $p\in\left\{  2,3,5,7,13\right\}  $, and let $K$ be either $\mathbb{Q}(i)$ or $\mathbb{Q}(\zeta_{3})$. Let $E_{1}$ and $E_{2}$ be $p$-isogenous discriminant ideal twins defined over $K$. Then $E_{1}$ and $E_{2}$ are twists of the following:
			
			\begin{enumerate}
				\item the elliptic curves appearing in Table \ref{ta:clasoverQ};
				\item $\mathcal{C}_{p,1}(t_{0},1)$ and $\mathcal{C}_{p,2}(t_{0},1)$,
				respectively, where $t_{0}$ is one of the elements given in Table
				\ref{ta:clasoverimK}. The table also lists their $j$-invariants and the ratio
				of their minimal discriminants. In particular, if the ratio is $1$, then
				$\mathcal{C}_{p,1}(t_{0},1)$ and $\mathcal{C}_{p,2}(t_{0},1)$ are discriminant twins.
			\end{enumerate}
		\end{proposition}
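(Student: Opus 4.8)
The proof follows the template of Proposition~\ref{CorQuaClass}, with the extra bookkeeping needed because $\mathcal{O}_K^\times$ is now $\mu_4=\langle i\rangle$ (for $K=\mathbb{Q}(i)$) or $\mu_6=\langle-\zeta_3\rangle$ (for $K=\mathbb{Q}(\zeta_3)$) rather than $\{\pm1\}$; in both cases $h_K=1$, so the unit group is the only source of extra parameters.

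First I would dispatch the case $j(E_1)=j(E_2)\in\{0,1728\}$. Since $i\in\mathbb{Q}(i)$ and $\zeta_3\in\mathbb{Q}(\zeta_3)$, Theorem~\ref{thmforj-_1728} rules out $(p,j)=(2,1728)$ over $\mathbb{Q}(i)$ and $(p,j)=(3,0)$ over $\mathbb{Q}(\zeta_3)$; for the complementary pairings $(p,j)=(3,0)$ over $\mathbb{Q}(i)$ and $(p,j)=(2,1728)$ over $\mathbb{Q}(\zeta_3)$ one uses that $3$ is inert in $\mathbb{Q}(i)$ and $2$ is inert in $\mathbb{Q}(\zeta_3)$, so $p\mathcal{O}_K$ is prime, hence not a square, and Theorem~\ref{thmforj-_1728} again applies. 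The remaining primes $p\in\{5,7,13\}$ are handled by Proposition~\ref{Proponjs}: the kernel polynomials of the relevant self-isogenies are defined over $\mathbb{Q}(\zeta_3)$ for $j=0$ and over $\mathbb{Q}(i)$ for $j=1728$, and in the fields where that condition is met the two curves turn out to be $K$-isomorphic. Hence no discriminant ideal twins arise with $j$-invariants both $0$ or $1728$, consistent with the statement of the proposition.

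Assume now that the $j$-invariants of $E_1$ and $E_2$ are not both identically $0$ or $1728$. By Theorem~\ref{PropMinDisc} there are $t_0,d_0\in\mathcal{O}_K$ with $E_i\cong_K\mathcal{C}_{p,i}(t_0,d_0)$ and, for every prime $\mathfrak{p}$ of $K$, $\nu_{\mathfrak{p}}(t_0)=\tfrac{12}{p-1}k_{\mathfrak{p}}$ with $0\le k_{\mathfrak{p}}\le\nu_{\mathfrak{p}}(p)$. As $h_K=1$ we may write $t_0=\zeta\prod_{\mathfrak{p}\mid p}\pi_{\mathfrak{p}}^{12k_{\mathfrak{p}}/(p-1)}$ for a root of unity $\zeta\in\mathcal{O}_K^\times$ and uniformizers $\pi_{\mathfrak{p}}$. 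Using the factorization of $p\mathcal{O}_K$ in each field ($2\mathcal{O}_{\mathbb{Q}(i)}=(1+i)^2$ ramifies, $3$ and $7$ are inert, and $5,13$ split in $\mathbb{Q}(i)$; $3\mathcal{O}_{\mathbb{Q}(\zeta_3)}=(\sqrt{-3})^2$ ramifies, $2$ and $5$ are inert, and $7,13$ split in $\mathbb{Q}(\zeta_3)$) together with the twist-equivalence $j_{p,1}(\pm1)=j_{p,2}(\pm p^{12/(p-1)})$ from Remark~\ref{SameJ}, this leaves only a short explicit list of candidate $t_0$ in each field, which I would tabulate up to Galois conjugation. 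The candidates with $t_0=\pm1$ reproduce, after base change and a quadratic twist, the rational pairs of Table~\ref{ta:clasoverQ} (their minimal discriminant ideals being supported only at primes of additive potentially good reduction that are unramified in $K$, this property is preserved by Corollary~\ref{DokLemma}). Each remaining candidate is run through Theorem~\ref{ThmClassification}: for $p\neq2$ this immediately certifies that $\mathcal{C}_{p,1}(t_0,d_0)$ and $\mathcal{C}_{p,2}(t_0,d_0)$ are discriminant ideal twins, and discriminant twins precisely when $t_0\in\mathcal{O}_K^{12/(p-1)}$, provided the two curves are not $K$-isomorphic; when $j_{p,1}(t_0)=j_{p,2}(t_0)$ this last point is settled with Lemma~\ref{SameJCM} by testing whether the CM order (an order in $\mathbb{Q}(\sqrt{-p})$ for the values of $t_0$ occurring here) is contained in $K$. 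Recording the surviving pairs, together with the ratio $\mathfrak{D}^{\mathrm{min}}_1/\mathfrak{D}^{\mathrm{min}}_2$ that distinguishes discriminant twins from discriminant ideal twins, yields Table~\ref{ta:clasoverimK}.

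The main obstacle is the prime $p=2$, where Example~\ref{ex:p2} shows the converse of Theorem~\ref{ThmClassification} can fail: for each of the finitely many candidate $t_0$ over $\mathbb{Q}(i)$ and $\mathbb{Q}(\zeta_3)$ one must instead run Tate's algorithm on $\mathcal{C}_{2,i}(t_0,1)$ at the prime above $2$ and compare Kodaira--N\'eron types and minimal discriminant valuations directly. This is exactly where the ramification of $2$ in $\mathbb{Q}(i)$ is relevant, since $2\mathcal{O}_{\mathbb{Q}(i)}$ is then a square: cases that cannot occur over $\mathbb{Q}$, such as the liminal curves of Remark~\ref{remark:Qcurves}, can now produce genuine twins. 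All of these finite verifications are carried out with the SageMath code of \cite{GitHubDIT}.
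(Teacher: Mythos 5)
Your proposal is correct and follows essentially the same route as the paper's proof: rule out the singular $j$-invariant case via Theorem~\ref{thmforj-_1728}, reduce to the finite list of candidate $t_0$ (unit times a product of appropriate powers of uniformizers above $p$) via Theorem~\ref{PropMinDisc} and the factorizations of $p\mathcal{O}_K$, cut the list down using the twist-equivalences of Remark~\ref{SameJ}, certify the survivors for $p\ne2$ with Theorem~\ref{ThmClassification} while settling equal-$j$-invariant cases by Lemma~\ref{SameJCM}, and verify the $p=2$ cases directly by Tate's algorithm since the converse of Theorem~\ref{ThmClassification} is not available there. You supply a bit more detail than the paper in the $j\in\{0,1728\}$ step (spelling out why $3\mathcal{O}_{\mathbb{Q}(i)}$ and $2\mathcal{O}_{\mathbb{Q}(\zeta_3)}$ are not squares), but this is a fleshing-out of the same citation, not a different argument.
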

		
		{\footnotesize \renewcommand*{\arraystretch}{1.15}
			\begin{longtable}[c]{cccccc}
				\caption{Additional prime isogenous discriminant ideal twins over $\Q(i)$ and $\Q(\zeta_3)$}\\
				\hline
				$K$ & $p$ & $t_{0}$ & $j\!\left(  \mathcal{C}_{p,1}(t_{0},1)\right)  $ & $j\!\left(
				\mathcal{C}_{p,2}(t_{0},1)\right)  $ & $\Delta_{\mathcal{C}_{p,2}}^{\text{min}}/\Delta_{\mathcal{C}_{p,1}}^{\text{min}}$\\ %$\frac{\Delta_{\mathcal{C}_{p,2}(t_{0}%,1)}^{\text{min}}}{\Delta_{\mathcal{C}_{p,1}(t_{0},1)}^{\text{min}}}$\\
				\hline
				\endfirsthead
				\caption[]{\emph{continued}}\\
				\hline
				$K$ & $p$ & $t_{0}$ & $j\!\left(  \mathcal{C}_{p,1}(t_{0},1)\right)  $ & $j\!\left(
				\mathcal{C}_{p,2}(t_{0},1)\right)  $ & $\Delta_{\mathcal{C}_{p,2}}^{\text{min}}/\Delta_{\mathcal{C}_{p,1}}^{\text{min}}$\\
				\hline
				\endhead
				\hline
				\multicolumn{6}{r}{\emph{continued on next page}}
				\endfoot
				\hline
				\endlastfoot
				
				$\mathbb{Q}(i)$ & $2$ &  $i$ & $-196607i - 16776448$ & $-4048i + 767$ & $-i$ \\\cmidrule(lr){3-6}
				&  & $-i$ & $196607i - 16776448$ & $4048i + 767$ & $i$ \\\cmidrule(lr){3-6}
				&  &  $64$ & $8000$ & $8000$ & $-1$ \\\cmidrule(lr){3-6}
				&  & $64i$ & $-3008i - 3328$ & $3008i - 3328$ & $i$ \\\cmidrule(lr){2-6}

				& $3$ & $i$ & $387223660i - 19131120$ & $-460i + 720$ & $-1$ \\\cmidrule(lr){3-6}
				&  & $-i$ & $-387223660i - 19131120$ & $460i + 720$ & $-1$ \\\cmidrule(lr){2-6}

				& $5$ & $i$ & $-29902540624i + 7303907000$ & $1136i - 520$ & $1$ \\\cmidrule(lr){3-6}
				&  & $-i$ & $29902540624i + 7303907000$ & $-1136i - 520$ & $1$ \\\cmidrule(lr){3-6}
				&  & $2i + 11$ & $-732160i + 1009152$ & $732160i + 1009152$ & $1$ \\\cmidrule(lr){3-6}
				&  & $11i + 2$ & $503360i + 114984$ & $-503360i + 114984$ & $1$ \\\cmidrule(lr){2-6}
				% &  & $11i - 2$ & $287496$ & $287496$ & $1$ \\\cmidrule(lr){2-6}

				& $7$ & $i$ & $587961739625i - 376600566000$ & $-1625i - 6000$ & $-1$ \\\cmidrule(lr){3-6}
				&  & $-i$ & $-587961739625i - 376600566000$ & $1625i - 6000$ & $-1$ \\\cmidrule(lr){2-6}

				& $13$ & $i$ & $142566162324912i + 297218710280952$ & $26352i + 128952$ & $1$ \\\cmidrule(lr){3-6}
				&  & $-i$ & $-142566162324912i + 297218710280952$ & $-26352i + 128952$ & $1$ \\\cmidrule(lr){3-6}
				&  & $2i + 3$ & $4092042240i + 410420736$ & $-4092042240i + 410420736$ & $1$ \\\cmidrule(lr){3-6}
				&  & $3i + 2$ & $-1508569920i - 357843672$ & $1508569920i - 357843672$ & $1$ \\\hline
				% &  & $3i - 2$ & $287496$ & $287496$ & $1$  \\\hline
				
				$\mathbb{Q}(\zeta_3)$ & $2$ & $\zeta_3$ & $16580609\zeta_3 - 195840$ & $-4049\zeta_3 - 3329$ & $\zeta_3^2$ \\\cmidrule(lr){3-6}
				&  & $-\zeta_3$ & $16973823\zeta_3 + 197376$ & $4047\zeta_3 + 4863$ & $-\zeta_3^2$ \\\cmidrule(lr){3-6}
				&  & $\zeta_3^2$ & $-16580609\zeta_3 - 16776449$ & $4049\zeta_3 + 720$ & $\zeta_3$ \\\cmidrule(lr){3-6}
				&  & $-\zeta_3^2$ & $-16973823\zeta_3 - 16776447$ & $-4047\zeta_3 + 816$ & $-\zeta_3$ \\\cmidrule(lr){2-6}
				& $3$ & $\zeta_3$ & $18935047\zeta_3 + 387224415$ & $-495\zeta_3 - 8$ & $\zeta_3$ \\\cmidrule(lr){3-6}
				&  & $-\zeta_3$ & $19328705\zeta_3 - 387222903$ & $423\zeta_3 + 1448$ & $\zeta_3$ \\\cmidrule(lr){3-6}
				&  & $\zeta_3^2$ & $-18935047\zeta_3 + 368289368$ & $495\zeta_3 + 487$ & $\zeta_3^2$ \\\cmidrule(lr){3-6}
				&  & $-\zeta_3^2$ & $-19328705\zeta_3 - 406551608$ & $-423\zeta_3 + 1025$ & $\zeta_3^2$ \\\cmidrule(lr){3-6}
				&  & $27\zeta_3$ & $18981\zeta_3 + 13149$ & $-18981\zeta_3 - 5832$ & $\zeta_3$ \\\cmidrule(lr){3-6}
				&  & $-27\zeta_3$ & $33507\zeta_3 - 11637$ & $-33507\zeta_3 - 45144$ & $\zeta_3$ \\\cmidrule(lr){2-6}
				
				& $5$ & $\zeta_3$ & $23213475001\zeta_3 - 6709180500$ & $179\zeta_3 - 361$ & $\zeta_3^2$ \\\cmidrule(lr){3-6}
				&  & $-\zeta_3$ & $-37821287501\zeta_3 - 7939255500$ & $-2719\zeta_3 - 739$ & $\zeta_3^2$ \\\cmidrule(lr){3-6}
				&  & $\zeta_3^2$ & $-23213475001\zeta_3 - 29922655501$ & $-179\zeta_3 - 540$ & $\zeta_3$ \\\cmidrule(lr){3-6}
				&  & $-\zeta_3^2$ & $37821287501\zeta_3 + 29882032001$ & $2719\zeta_3 + 1980$ & $\zeta_3$ \\\cmidrule(lr){2-6}

				& $7$ & $\zeta_3$ & $-598221714432\zeta_3 - 300947514624$ & $-3072\zeta_3 - 2304$ & $1$ \\\cmidrule(lr){3-6}
				&  & $-\zeta_3$ & $576310764672\zeta_3 + 1054107237168$ & $-10368\zeta_3 - 13392$ & $1$ \\\cmidrule(lr){3-6}
				&  & $\zeta_3^2$ & $598221714432\zeta_3 + 297274199808$ & $3072\zeta_3 + 768$ & $1$ \\\cmidrule(lr){3-6}
				&  & $-\zeta_3^2$ & $-576310764672\zeta_3 + 477796472496$ & $10368\zeta_3 - 3024$ & $1$ \\\cmidrule(lr){3-6}
				&  & $3\zeta_3 + 8$ & $-16982784\zeta_3 - 11720592$ & $16982784\zeta_3 + 5262192$ & $1$ \\\cmidrule(lr){3-6}
				% &  & $5\zeta_3 - 3$ & $54000$ & $54000$ & $1$ \\\cmidrule(lr){3-6}
				
				% &  & $5\zeta_3 + 8$ & $-12288000$ & $-12288000$ & $1$ \\\cmidrule(lr){3-6}
				
				&  & $-8\zeta_3 -5$ & $-1945944\zeta_3 - 1705077$ & $1945944\zeta_3 + 240867$ & $1$ \\\cmidrule(lr){3-6}
				&  & $8\zeta_3 + 5$ & $-2260440\zeta_3 + 2175405$ & $2260440\zeta_3 + 4435845$ & $1$ \\\cmidrule(lr){2-6}

				& $13$ & $\zeta_3$ & $-37044659937024\zeta_3 + 90029813865984$ & $20736\zeta_3 + 13824$ & $1$ \\\cmidrule(lr){3-6}
				&  & $-\zeta_3$ & $-576859107555264\zeta_3 + 422133441946416$ & $695616\zeta_3 + 296496$ & $1$ \\\cmidrule(lr){3-6}
				&  & $\zeta_3^2$ & $37044659937024\zeta_3 + 127074473803008$ & $-20736\zeta_3 - 6912$ & $1$ \\\cmidrule(lr){3-6}
				&  & $-\zeta_3^2$ & $576859107555264\zeta_3 + 998992549501680$ & $-695616\zeta_3 - 399120$ & $1$ \\\cmidrule(lr){3-6}
				
				% &  & $\zeta_3 - 3$ & $54000$ & $54000$ & $1$ \\\cmidrule(lr){3-6}
				&  & $\zeta_3 + 4$ & $-6697506816\zeta_3 - 5868496128$ & $6697506816\zeta_3 + 829010688$ & $1$ \\\cmidrule(lr){3-6}
				&  & $3\zeta_3 + 4$ & $-934053120\zeta_3 + 1982683440$ & $934053120\zeta_3 + 2916736560$ & $1$ \\\cmidrule(lr){3-6}
				% &  & $4\zeta_3 + 1$ & $-12288000$ & $-12288000$ & $1$ \\\cmidrule(lr){3-6}
				&  & $4\zeta_3 + 3$ & $520805376\zeta_3 + 116036928$ & $-520805376\zeta_3 - 404768448$ & $1$ 
				\label{ta:clasoverimK}	
		\end{longtable}}
		
		\begin{proof}
			First, the pairs of elliptic curves appearing in Table \ref{ta:clasoverQ} remain
			discriminant ideal twins after base change to $K$. 
			
			Further, by Theorem \ref{thmforj-_1728}, we may assume that the $j$-invariants of
			$E_{1}$ and $E_{2}$ are not both identically $0$ or $1728$. 
			
			Next, we claim that $E_{1}$ and $E_{2}$ are twists of $\mathcal{C}%
			_{p,1}(ut_{0},1)$ and $\mathcal{C}_{p,2}(ut_{0},1)$, respectively, where
			$u\in\mathcal{O}_{K}^{\times}$ and $t_{0}$ satisfies \ref{valt0ImK}. Indeed,
			while Proposition \ref{CorQuaClass} concerned imaginary quadratic fields with
			unit group $\left\{  \pm1\right\}  $, the proof of part $\left(  1\right)  $
			continues to be true if $\pm t_{0}$ is replaced with $ut_{0}$ for
			$u\in\mathcal{O}_{K}^{\times}$. In particular, $E_{1}$ and $E_{2}$ may be
			twists of $\mathcal{C}_{p,1}(u,1)$ and $\mathcal{C}_{p,2}(u,1)$, respectively,
			where~$u\in\mathcal{O}_{K}^{\times}$. Table \ref{ta:clasoverimK} records these
			pair of elliptic curves when $u\neq\pm1$. Note that the unit groups of $%
			%TCIMACRO{\U{211a} }%
			%BeginExpansion
			\mathbb{Q}
			%EndExpansion
			(i)$ and $%
			%TCIMACRO{\U{211a} }%
			%BeginExpansion
			\mathbb{Q}
			%EndExpansion
			(\zeta_{3})$ are $\left\{  \pm1,\pm i\right\}  $ and $\left\{  \pm1,\pm
			\zeta_{3},\pm\zeta_{3}^{2}\right\}  $, respectively. Moreover, as with Remark
			\ref{SameJ}, it is checked that the following equalities hold:
			\[%
			\begin{array}
				[c]{rllcrll}%
				j(\mathcal{C}_{p,1}(\pm1,1)) & = & j(\mathcal{C}_{p,2}(\pm p^{\frac{12}{p-1}%
				},1)) & \qquad & j(\mathcal{C}_{p,1}(\pm i,1)) & = & j(\mathcal{C}_{p,2}(\mp
				ip^{\frac{12}{p-1}},1)),\\
				j(\mathcal{C}_{p,1}(\pm\zeta_{3},1)) & = & j(\mathcal{C}_{p,2}(\pm\zeta
				_{3}^{2}p^{\frac{12}{p-1}},1)) & \qquad & j(\mathcal{C}_{p,1}(\pm\zeta_{3}%
				^{2},1)) & = & j(\mathcal{C}_{p,2}(\pm\zeta_{3}p^{\frac{12}{p-1}},1)).
			\end{array}
			\]
			This shows the claim, and as a consequence, we have that if $E_{1}$ and
			$E_{2}$ are not twists of $\mathcal{C}_{p,1}(z,1)$ and $\mathcal{C}%
			_{p,2}(z,1)$, respectively, where $z\in\mathcal{O}_{K}^{\times}$ or
			$\left\vert z\right\vert =p^{\frac{12}{p-1}}$, then $E_{1}$ and $E_{2}$ are
			twists of $\mathcal{C}_{p,1}(t_{0},1)$ and $\mathcal{C}_{p,2}(t_{0},1)$,
			respectively, where%
			\begin{equation}
				t_{0}\in\left\{
				\begin{array}
					[c]{cl}%
					\left\{  \lambda_{1},\lambda_{2}\right\}   & p\mathcal{O}_{K}=\mathfrak{p}%
					_{1}\mathfrak{p}_{2}\ \text{with }\mathfrak{p}^{\frac{12}{p-1}}=\lambda
					_{i}\mathcal{O}_{K},\\
					\left\{  \lambda\right\}   & p\mathcal{O}_{K}=\mathfrak{p}^{2}\text{ with
					}\mathfrak{p}^{\frac{12}{p-1}}=\lambda\mathcal{O}_{K}.
				\end{array}
				\right.  \label{t0valsImKsp}%
			\end{equation}
			In particular, we only have to consider the cases when $p\mathcal{O}_{K}$
			ramifies or splits completely. We now proceed by cases.
			
			Case 1. Let $K=%
			%TCIMACRO{\U{211a} }%
			%BeginExpansion
			\mathbb{Q}
			%EndExpansion
			(i)$. The prime ideal factorization of $p\mathcal{O}_{K}$ is given in
			(\ref{Qifactors}).%
			\begin{equation}
				\renewcommand{\arraystretch}{1.4}
				\begin{array}
					[c]{ccccc}\bottomrule
					2 & 3 & 5 & 7 & 13\\\hline
					(1+i)^{2}\mathcal{O}_{K} & 3\mathcal{O}_{K} & (2+i)(2-i)\mathcal{O}_{K} &
					7\mathcal{O}_{K} & (3+2i)(3-2i)\mathcal{O}_{K}\\\bottomrule
				\end{array}
				\label{Qifactors}%
			\end{equation}
			By the above, new discriminant ideal twins will only occur for $p\in\left\{
			2,5,13\right\}  $. For these $p$, let $\mathfrak{p}$ be one of the prime ideal
			factors appearing in (\ref{Qifactors}). Then we have that $\mathfrak{p}%
			^{\frac{12}{p-1}}=\mathfrak{p}$ for $p=13$, and for the remaining cases we
			have that $\mathfrak{p}^{\frac{12}{p-1}}$ is:%
			\[
			\left(  1+i\right)  ^{12}\mathcal{O}_{K}=64\mathcal{O}_{K},\qquad\left(
			2+i\right)  ^{3}\mathcal{O}_{K}=\left(  11i+2\right)  \mathcal{O}_{K}%
			,\qquad\left(  2-i\right)  ^{3}\mathcal{O}_{K}=\left(  11i-2\right)
			\mathcal{O}_{K}.
			\]
			Consequently, we have that if $t_{0}$ satisfies (\ref{t0valsImKsp}), then%
			\[
			t_{0}\in\left\{
			\begin{array}
				[c]{ll}%
				\left\{  64u\mid u\in\{\pm1,\pm i\}\right\}   & \text{if }p=2,\\
				\left\{  \left(  11i\pm2\right)  u\mid u\in\{\pm1,\pm i\}\right\}   & \text{if
				}p=5,\\
				\left\{  \left(  3\pm2i\right)  u\mid u\in\{\pm1,\pm i\}\right\}   & \text{if
				}p=13\text{.}%
			\end{array}
			\right.
			\]
			For these $t_{0}$, we note that $\mathcal{C}_{p,i}(t_{0},1)$ is singular if
			$\left(  p,t_{0}\right)  \in\left\{  \left(  2,-64\right)  ,\left(
			5,\pm2i-11\right)  ,\left(  13,\pm2i-3\right)  \right\}  $. We also note that
			$\mathcal{C}_{p,1}(t_{0},1)$ and $\mathcal{C}_{p,2}(t_{0},1)$ are
			$K$-isomorphic if $\left(  p,t_{0}\right)  \in\left\{  \left(  5,\pm
			11i-2\right)  ,\left(  13,\pm3i-2\right)  \right\}  $. It is then verified
			that $\mathcal{C}_{p,1}(t_{0},1)$ and $\mathcal{C}_{p,2}(t_{0},1)$ are
			discriminant ideal twins for the remaining $t_{0}$. Next, let $\overline
			{t_{0}}$ denote the complex conjugate of $t_{0}$. The proposition now follows
			for this case since $j\!\left(  \mathcal{C}_{p,1}(t_{0},1)\right)  =j\!\left(
			\mathcal{C}_{p,2}(\overline{t_{0}},1)\right)  $ and $j\!\left(  \mathcal{C}%
			_{p,2}(t_{0},1)\right)  =j\!\left(  \mathcal{C}_{p,1}(\overline{t_{0}%
			},1)\right)  $ for%
			\[
			\left(  p,t_{0}\right)  \in\left\{  \left(  2,64i\right)  ,\left(
			5,11i+2\right)  ,\left(  5,2i+11\right)  ,\left(  13,2i+3\right)  ,\left(
			13,3i+2\right)  \right\}  .
			\]

			Case 2. Let $K=%
			%TCIMACRO{\U{211a} }%
			%BeginExpansion
			\mathbb{Q}
			%EndExpansion
			(\zeta_{3})$. The prime ideal factorization of $p\mathcal{O}_{K}$ is given in
			(\ref{Qz3factors}).%
			\begin{equation}
				\renewcommand{\arraystretch}{1.4}
				\begin{array}
					[c]{ccccc}\bottomrule
					2 & 3 & 5 & 7 & 13\\\hline
					2\mathcal{O}_{K} & (\zeta_{3}+2)^{2}\mathcal{O}_{K} & 5\mathcal{O}_{K} &
					(\zeta_{3}+3)(\zeta_{3}-2)\mathcal{O}_{K} & (\zeta_{3}+4)(3\zeta
					_{3}+4)\mathcal{O}_{K}\\\bottomrule
				\end{array}
				\label{Qz3factors}%
			\end{equation}
			By the discussion preceding Case 1, we have that new discriminant ideal twins
			will only occur for $p\in\left\{  3,7,13\right\}  $. For these $p$, let
			$\mathfrak{p}$ be one of the prime ideal factors appearing in
			(\ref{Qz3factors}). Then we have that $\mathfrak{p}^{\frac{12}{p-1}%
			}=\mathfrak{p}$ for $p=13$, and for the remaining cases we have that
			$\mathfrak{p}^{\frac{12}{p-1}}$ is:%
			\[
			(\zeta_{3}+2)^{6}=27\mathcal{O}_{K},\qquad(\zeta_{3}+3)^{2}\mathcal{O}%
			_{K}=(8\zeta_{3}+3)\mathcal{O}_{K},\qquad(\zeta_{3}-2)^{2}\mathcal{O}%
			_{K}=(8\zeta_{3}+5)\mathcal{O}_{K}.
			\]
			Consequently, we have that if $t_{0}$ satisfies (\ref{t0valsImKsp}), then%
			\begin{equation}
				t_{0}\in\left\{
				\begin{array}
					[c]{ll}%
					\left\{  27u\mid u\in\{\pm1,\pm\zeta_{3},\pm\zeta_{3}^{2}\}\right\}   &
					\text{if }p=3,\\
					\left\{  (8\zeta_{3}+3)u,(8\zeta_{3}+5)u\mid u\in\{\pm1,\pm\zeta_{3},\pm
					\zeta_{3}^{2}\}\right\}   & \text{if }p=7,\\
					\{\zeta_{3}+4)u,(3\zeta_{3}+4)u\mid u\in\{\pm1,\pm\zeta_{3},\pm\zeta_{3}%
					^{2}\} & \text{if }p=13\text{.}%
				\end{array}
				\right.  \label{t0ze3vals}%
			\end{equation}
			For these $t_{0}$, we note that $\mathcal{C}_{p,i}(t_{0},1)$ is singular if
			\[
			\left(  p,t_{0}\right)  \in\left\{  \left(  3,-27\right)  ,\left(
			7,3\zeta_{3}-5\right)  ,\left(  7,-3\zeta_{3}-8\right)  ,\left(
			13,3\zeta_{3}-1\right)  ,\left(  13,-3\zeta_{3}-4\right)  \right\}  .
			\]
			In addition, $\mathcal{C}_{p,1}(t_{0},1)$ and $\mathcal{C}_{p,2}(t_{0},1)$ are
			$K$-isomorphic if%
			\[
			\left(  p,t_{0}\right)  \in\left\{  \left(  3,27\right)  ,\left(  7,\pm
			\sigma(5\zeta_{3}-3)\right)  ,\left(  13,\sigma(\zeta_{3}-3)\right)  ,\left(
			13,\sigma(4\zeta_{3}+1)\right)  \mid\sigma\in\operatorname*{Gal}(K/\mathbb{Q})\right\}  .
			\]
			It is then verified that for the remaining $t_{0}$, $\mathcal{C}_{p,1}(t_{0},1)$ and $\mathcal{C}_{p,2}(t_{0},1)$ are discriminant ideal
			twins. This concludes the proof since $j\!\left(  \mathcal{C}_{p,1}%
			(t_{0},1)\right)  =j\!\left(  \mathcal{C}_{p,2}(\overline{t_{0}},1)\right)  $
			and $j\!\left(  \mathcal{C}_{p,2}(t_{0},1)\right)  =j\!\left(  \mathcal{C}%
			_{p,1}(\overline{t_{0}},1)\right)  $ for%
			\[
			\left(  p,t_{0}\right)  \in\left\{  \left(  3,\pm27\zeta_{3}\right)  ,\left(
			7,3\zeta_{3}+8\right)  ,\left(  7,\pm(8\zeta_{3}+5)\right)  ,\left(
			13,\zeta_{3}+4\right)  ,\left(  13,3\zeta_{3}+4\right)  ,\left(  13,4\zeta
			_{3}+3\right)  \right\}  . \qedhere
			\]
		\end{proof}

		\bibliographystyle{amsplain}
		\bibliography{Disc_twins}

\providecommand{\bysame}{\leavevmode\hbox to3em{\hrulefill}\thinspace}
\providecommand{\MR}{\relax\ifhmode\unskip\space\fi MR }
% \MRhref is called by the amsart/book/proc definition of \MR.
\providecommand{\MRhref}[2]{%
  \href{http://www.ams.org/mathscinet-getitem?mr=#1}{#2}
}
\providecommand{\href}[2]{#2}
\begin{thebibliography}{10}

\bibitem{Bariso}
Alexander~J. Barrios, \emph{Explicit classification of isogeny graphs of
  rational elliptic curves}, Int. J. Number Theory \textbf{19} (2023), no.~4,
  913--936. \MR{4555392}

\bibitem{GitHubDIT}
Alexander~J. Barrios, Maila Brucal-Hallare, Alyson Deines, Piper Harris, and
  Manami Roy, \emph{Code for prime isogenous discriminant ideal twins},
  \url{https://github.com/adeines/DiscriminantTwins/tree/master/RNT1}, 2025.

\bibitem{Bruinier_ClassPolynomials}
Jan~Hendrik Bruinier, Ken Ono, and Andrew~V. Sutherland, \emph{Class
  polynomials for nonholomorphic modular functions}, Journal of Number Theory
  \textbf{161} (2016), 204--229.

\bibitem{SutherlandVolcanoes}
Reinier Bröker, Kristin Lauter, and Andrew~V. Sutherland, \emph{Modular
  polynomials via isogeny volcanoes}, Mathematics of Computation \textbf{81}
  (2011), no.~278, 1201--1231.

\bibitem{CremWat}
John Cremona and Mark Watkins, \emph{Computing isogenies of elliptic curves},
  preprint, \url{http://magma.maths.usyd.edu.au/users/watkins/papers/isogs.ps},
  2005.

\bibitem{DeinesThesis}
Alyson Deines, \emph{Shimura {D}egrees for {E}lliptic {C}urves over {N}umber
  {F}ields}, ProQuest LLC, Ann Arbor, MI, 2014, Thesis (Ph.D.)--University of
  Washington. \MR{3271859}

\bibitem{Deines2018}
\bysame, \emph{Discriminant twins}, Women in numbers {E}urope {II}, Assoc.
  Women Math. Ser., vol.~11, Springer, Cham, 2018, pp.~83--106. \MR{3882707}

\bibitem{DokchitserDokchitser2015}
Tim Dokchitser and Vladimir Dokchitser, \emph{Local invariants of isogenous
  elliptic curves}, Trans. Amer. Math. Soc. \textbf{367} (2015), no.~6,
  4339--4358. \MR{3324930}

\bibitem{MR3838339}
Felix Klein and Robert Fricke, \emph{Lectures on the theory of elliptic modular
  functions. {V}ol. 2}, CTM. Classical Topics in Mathematics, vol.~2, Higher
  Education Press, Beijing, 2017, Translated from the German original [
  MR0247997] by Arthur M. DuPre. \MR{3838339}

\bibitem{lmfdb}
The {LMFDB Collaboration}, \emph{The {L}-functions and modular forms database},
  \url{http://www.lmfdb.org}, 2021, [Online; accessed 18 October 2021].

\bibitem{Lorenzini}
Dino Lorenzini, \emph{Models of curves and wild ramification}, Pure Appl. Math.
  Q. \textbf{6} (2010), no.~1, Special Issue: In honor of John Tate. Part 2,
  41--82. \MR{2591187}

\bibitem{MR3084348}
\'{A}lvaro Lozano-Robledo, \emph{On the field of definition of {$p$}-torsion
  points on elliptic curves over the rationals}, Math. Ann. \textbf{357}
  (2013), no.~1, 279--305. \MR{3084348}

\bibitem{MazurRatIsogPrimeDeg}
B.~Mazur, \emph{Rational isogenies of prime degree (with an appendix by {D}.
  {G}oldfeld)}, Invent. Math. \textbf{44} (1978), no.~2, 129--162.
  \MR{80h:14022}

\bibitem{Neron1964}
Andr\'{e} N{\'{e}}ron, \emph{Mod\`eles minimaux des vari\'{e}t\'{e}s
  ab\'{e}liennes sur les corps locaux et globaux}, Inst. Hautes \'{E}tudes Sci.
  Publ. Math. (1964), no.~21, 128. \MR{179172}

\bibitem{Ogg}
A.~P. Ogg, \emph{Elliptic curves and wild ramification}, Amer. J. Math.
  \textbf{89} (1967), 1--21. \MR{207694}

\bibitem{Papadopoulos1993}
Ioannis Papadopoulos, \emph{Sur la classification de {N}\'{e}ron des courbes
  elliptiques en caract\'{e}ristique r\'{e}siduelle {$2$} et {$3$}}, J. Number
  Theory \textbf{44} (1993), no.~2, 119--152. \MR{1225948}

\bibitem{papikian2016optimal}
Mihran Papikian and Joseph Rabinoff, \emph{Optimal quotients of jacobians with
  toric reduction and component groups}, Canadian Journal of Mathematics
  \textbf{68} (2016), no.~6, 1362--1381.

\bibitem{Ribet1997ParametrizationsOE}
Kenneth~A. Ribet and S.~Takahashi, \emph{Parametrizations of elliptic curves by
  shimura curves and by classical modular curves.}, Proceedings of the National
  Academy of Sciences of the United States of America \textbf{94 21} (1997),
  11110--4.

\bibitem{Silverman1994}
Joseph~H. Silverman, \emph{Advanced topics in the arithmetic of elliptic
  curves}, Graduate Texts in Mathematics, vol. 151, Springer-Verlag, New York,
  1994. \MR{1312368}

\bibitem{Silverman2009}
\bysame, \emph{The arithmetic of elliptic curves}, second ed., Graduate Texts
  in Mathematics, vol. 106, Springer, Dordrecht, 2009. \MR{2514094}

\bibitem{Tsukazaki}
Kiminori Tsukazaki, \emph{Explicit isogenies of elliptic curves}, ProQuest LLC,
  Ann Arbor, MI, 2013, Thesis (Ph.D.)--University of Warwick (United Kingdom).
  \MR{3389382}

\bibitem{voight2014computing}
John Voight and John Willis, \emph{Computing power series expansions of modular
  forms}, Computations with Modular Forms: Proceedings of a Summer School and
  Conference, Heidelberg, August/September 2011, Springer, 2014, pp.~331--361.

\bibitem{MR33801}
Shianghaw Wang, \emph{On {G}runwald's theorem}, Ann. of Math. (2) \textbf{51}
  (1950), 471--484. \MR{33801}

\end{thebibliography}
		
	\end{document}